\documentclass[11pt]{article}
\usepackage{amsmath,amssymb,amsthm,graphicx,subfigure,float,url}
\usepackage[colorlinks=true,citecolor={Plum},linkcolor={Periwinkle}]{hyperref}
\usepackage{pdfsync}
\usepackage{float}
\usepackage[left=2cm,right=2cm,top=1.5cm,bottom=1.5cm]{geometry}
\usepackage{mathrsfs}
\usepackage[usenames, dvipsnames]{xcolor}
\usepackage{dsfont}
\textwidth18cm
\textheight22cm
\oddsidemargin-1cm
\evensidemargin0cm

\title{\sf Is the Faber-Krahn inequality true for the Stokes operator?}
\author{Antoine Henrot\footnote{Universit\'e de Lorraine, CNRS, Institut Elie Cartan de Lorraine, BP 70239 54506 Vand\oe uvre-l\`es-Nancy Cedex, France ({\tt antoine.henrot@univ-lorraine.fr}).}
\and Idriss Mazari-Fouquer\footnote{CEREMADE, UMR CNRS 7534, Universit\'e Paris-Dauphine, Universit\'e PSL, Place du Mar\'echal De Lattre De Tassigny, 75775 Paris cedex 16, France, (\texttt{mazari@ceremade.dauphine.fr}) }
\and Yannick Privat\footnote{Universit\'e de Lorraine, CNRS, Institut Elie Cartan de Lorraine, Inria, BP 70239 54506
Vandœuvre-l\`es-Nancy Cedex, France. ({\tt yannick.privat@univ-lorraine.fr}).}~\footnote{Institut Universitaire de France (IUF)}.
}

\def\u {{\mathbf{u}}}
\def\V{{\mathbf{V}}}
\def\U{{\mathbf{U}}}
\def\n{{\nabla}}
\def\B{{\mathbb{B}}}
\def\O{{\Omega}}
\def\div{{\mathrm{div}}}
\def\curl{{\mathbf{curl}}}
 \def\u{{\mathbf{u}}}
  \def\v{{\mathbf{v}}}

 \def\er{{\vec{e}_r}}
  \def\et{{\vec{e}_\theta}}
 \def\ep{{\vec{e}_\phi}}

\newcommand{\N} {\mathbb{N}}
\newcommand{\Z} {\mathbb{Z}}
\newcommand{\C} {\mathbb{C}}
\newcommand{\R} {\mathbb{R}}

\newcommand{\e} {\varepsilon}

\renewcommand{\geq}{\geqslant}
\renewcommand{\leq}{\leqslant}

\newtheorem{theorem}{Theorem}  
\newtheorem{proposition}{Proposition}
\newtheorem{corollary}{Corollary}
\newtheorem{definition}{Definition}
\newtheorem{lemma}{Lemma}

\theoremstyle{definition}\newtheorem{remark}{Remark}

\begin{document}

\maketitle

\begin{abstract}
The goal of this paper is to investigate the minimisation of the first eigenvalue of the (vectorial) incompressible Dirichlet-Stokes operator. After providing an existence result, we investigate optimality conditions and we prove the following surprising result: while the ball satisfies first and second-order optimality conditions in dimension 2, it does not in dimension 3, so that the Faber-Krahn inequality for the Stokes operator is probably true in $\R^2$, but does not hold in $\R^3$. The multiplicity of the first eigenvalue of the Dirichlet-Stokes operator in the ball in $\R^3$
plays a crucial role in the proof of that claim.
\end{abstract}

\paragraph{Keywords:} Shape derivation, Shape semi-differentiation, Spectral optimisation, Stokes operator.

\paragraph{AMS Classification:} 47A10, 49Q10, 76D07.

\section{Introduction}

{In this article, we focus on a spectral optimisation problem governed by the Stokes operator; the latter is crucial in the analysis of fluid motions. In order to motivate this question, let us observe that the eigenvalues of the Stokes operator can be physically interpreted as the decay frequencies of the eigenmodes of a fluid. Each associated eigenmode represents a specific fluid motion. The eigenvalues are also related to the characteristic time scales of the dynamics of the fluid. Furthermore, it should be noted that the eigenvalues of the Stokes operator also appear naturally when dealing with the long-time behaviour of solutions to the (nonlinear) Navier-Stokes equations \cite{Berchio_2024,zbMATH03901316}. Our main goal here will thus be to further the understanding of the influence of the geometry of the domain on the first Stokes eigenvalue.
}

\subsection{Setting}

\paragraph{Scope of the paper.}
The optimisation of spectral quantities with respect to a domain is a central question in shape optimisation and in the calculus of variations. 
{Throughout the article, $\Omega$ will denote a subset of the ambient space $\R^d$, with $d\in \{2,3\}$. The specific choice of dimension $d$ will be specified for each result.}
Of particular importance is the paradigmatic question of minimising (with respect to the domain $\O$) the first eigenvalue of a differential operator under a volume constraint on $\O$. The celebrated Faber-Krahn inequality asserts that, when the operator is the (scalar) Dirichlet-Laplacian, the ball minimises the first eigenvalue with a volume constraint. 
In the present paper, we investigate the minimisation of the first eigenvalue of the Dirichlet-Stokes operator. Namely, {let $\O$ be a bounded, smooth open set, and let $W^{1,2}_0(\O;\R^d)$ be defined as 
the usual Sobolev space (here, the functions are $\R^d$-valued; see also the Notations paragraph below). Consider} the first eigenvalue
\begin{equation}\label{Eq:Rayleigh} 
\lambda_1(\O):=\min_{\substack{\u\in W^{1,2}_0(\O;\R^d)\\ \nabla\cdot \u=0\text{ in }\O, \ \u \neq 0}}\frac{\int_\O \Vert \n \u\Vert^2}{\int_\O \Vert \u\Vert^2},\end{equation} where $\nabla\cdot$ stands for the divergence operator. 
{In the expression above,  $\n\u$ stands for the Jacobian matrix of $\u$ or, in other words,
\[\n \u=\left(\frac{\partial u_i}{\partial x_j}\right)_{1\leq i,j\leq d}.\] 
With a slight abuse of notation, we use $\Vert \cdot\Vert$ to denote not only the Euclidean norm of $\R^d$, as in the term $\Vert \u\Vert^2$, but also the Frobenius norm of $\nabla \u$,, that is, 
\[
\Vert \nabla \u\Vert ^2=\n\u \colon\n\u=\sum_{i,j=1}^d\left(\frac{\partial u_i}{\partial x_j}\right)^2.
\]
}
This eigenvalue is associated with the eigen-equation
\begin{equation}\label{Eq:Eigenequation}
\begin{cases}-\Delta \u+\n p=\lambda_1(\O)\u &\text{ in }\O\,, 
\\ \n \cdot\u=0&\text{ in }\O\,, 
\\ \u=0&\text{ on }\partial\O.\end{cases}\end{equation}
In \eqref{Eq:Eigenequation} the function {$p$ (which is unique up to an additive constant) is the pressure associated with $\u$} , which can be interpreted as the Lagrange multiplier attached to the incompressibility constraint { $\n\cdot\u=0$}. 

The problem under consideration in this article is the following:
\begin{equation}\label{Eq:PvIntro}\fbox{$\displaystyle
\inf_{\O\subset \R^d\,, \O\text{ bounded, }|\O|\leq V_0}\lambda_1(\O).$}\end{equation} The constant $V_0>0$ is a given volume constraint (which will be immaterial as we will work with a scale invariant formulation of \eqref{Eq:PvIntro}).
  
The main contributions of the article are the following:
 \begin{enumerate}
 \item First, we obtain an \textbf{existence result} in the class of quasi-open sets (this is the natural framework for such  existence results; we refer to Definition \ref{De:QuasiOpen}) This is Theorem \ref{Th:Existence}. The method of proof relies on the concentration-compactness principle of Lions \cite{LionsCC}, which was adapted by Bucur \cite{zbMATH01454740} to the setting of shape optimisation. The main difficulty here is to handle the incompressibility condition.
  \item Second, we investigate the \textbf{local optimality} of the ball by checking first and second-order optimality conditions for Hadamard variations. In Theorem \ref{Th:Optimality2d}, we prove that the ball satisfies these optimality conditions in $\R^2$. On the other hand, we prove in Theorem \ref{Th:NonOptimality3d} that the ball does not satisfy first-order optimality conditions in $\R^3$. The proof of the non-optimality of the ball in $\R^3$ relies on the fact that $\lambda_1$ is, in this case, a multiple eigenvalue.
  \item We then derive precise \textbf{necessary optimality conditions} for the minimality of a set in $\R^3$; this is Theorem \ref{Th:Necessary}; this theorem and its proof are linked to some recent results in the optimisation of the first $\curl$ eigenvalue \cite{Gerner_2023}.
 \end{enumerate}

We discuss related works in details in section \ref{Se:Biblio} but let us already highlight some aspects of our problem. One of the first results in spectral shape optimisation is the celebrated Faber-Krahn inequality \cite{faber1923beweis,Krahn_1925}, which asserts that, under a volume constraint, the ball minimises the first eigenvalue of the Dirichlet-Laplacian in any dimension. This inequality can be derived through numerous methods, some of which can be generalised to the minimisation of other spectral quantities; the ball is very often the minimiser (or at least a local minimiser) for the first eigenvalue of several scalar differential operators.  Thus, Theorem \ref{Th:NonOptimality3d} hints at a deeper discrepancy between the scalar and the vectorial case. In general, let us observe that the literature devoted to spectral optimisation problems in the vectorial case is scarce. To the best of our knowledge, the results closest to ours were derived very recently by Enciso, Gerner \& Peralta-Salas \cite{enciso2022optimal,zbMATH07697301,Gerner_2023} in the case of the $\curl$ operator, a problem which was also investigated by Cantarella, DeTurck, Gluck \& Teytel  in the early 2000's \cite{Cantarella_2000_Bis}.

\paragraph{Notations.}
Throughout the paper we use the following conventions:
\begin{enumerate}
\item For any $k\in \N\,, p\in [1;+\infty)$, $W^{k,p}(\O)$ (resp. $W^{k,p}_0(\O)$) denotes the usual Sobolev space of order $k$ and index $p$ (resp. the functions of $W^{k,p}(\O)$ whose trace on $\partial \O$ is zero). Likewise, $W^{k,p}(\O;\R^d)$ (resp. $W^{k,p}_0(\O;\R^d)$) denotes the  Sobolev space of order $k$ and index $p$ of vector-valued functions, each coordinate of which is a $W^{k,p}(\O)$ (resp. $W^{k,p}_0(\O)$) function.
\item $\mathscr C^\infty_c(\R^d)$ is the set of compactly supported $\mathscr C^\infty$ functions.
\item $\n$ is the gradient operator, $\n \cdot$ the divergence operator and $\curl $ the curl operator.
\item All bold letters will be used to designate a $\R^2$ or $\R^3$ vector or vector field.
\item The double-dot product of two matrices $A=(a_{ij})_{1\leq i,j\leq N}$ and $B=(b_{ij})_{1\leq i,j\leq N}$ is the real number $A\colon B$ given by $A\colon B=\sum_{i,j=1}^Na_{ij}b_{ij}$.
\item $\mathbb B_d=\{ x\in\R^d\ \vert\ \Vert x\Vert< 1\}$ is the Euclidean unit ball of $\R^d$.
\item {$\Vert \mathbf{X}\Vert$ denotes either the Euclidean norm of $\R^d$ if $\mathbf{X}$ is a vector, or the Frobenius norm of $\mathbf{X}$ if $\mathbf{X}$ is a square matrix of size $d$; in other words $\Vert \mathbf{X}\Vert^2=\mathbf{X}:\mathbf{X}$. } 
\item $\mathscr{C}^\infty(\Omega)$ denotes the space of infinitely differentiable real-valued functions in $\Omega$.  
\item $\operatorname{Per}(\Omega)$  denotes the De Giorgi perimeter of $\Omega$.\end{enumerate}

\paragraph{Precise statement of the problem.}
To set our problem in an appropriate (from the point of view of existence properties) framework, we recall the definition of quasi-open sets \cite[Chapter 3]{zbMATH06838450}:
\begin{definition}\label{De:QuasiOpen}
A subset $\O$ of $\R^d$ is called \textbf{quasi-open} if there exists a non-increasing sequence of open sets $\{\omega_k\}_{k\in \N}$ such that 
\[ \lim_{k\to  \infty}\mathrm{cap}(\omega_k)=0\quad \text{ and }\quad \forall k\in \N,\quad  \Omega\cup \omega_k\text{ is open},\] where $\mathrm{cap}$ denotes the capacity of an open set. Recall that it is defined as 
\[ \mathrm{cap}(\omega):=\sup_{\substack{K\textnormal{compact}\\ K\subset \omega}}\, \inf_{\substack{v\in \mathscr C^\infty_c(\R^d)\\ v\geq 1\text{ on }K}}\int_{\R^d} \left\vert \n v\right\vert^2.\]

We define 
\[ \mathscr O:=\{\O\subset \R^d\,, \O\textnormal{ quasi-open}\,, |\O|>0\}.\] 
\end{definition}
Quasi-open sets are the natural framework in which to consider spectral shape optimisation problems \cite{Buttazzo_1993,Velichkov_2015}. For any given  $\O\in \mathscr O$ we can define \cite[Chapter 3]{zbMATH06838450} the Sobolev space $W^{1,2}_0(\O)$ as the set of functions $v\in W^{1,2}(\R^d)$ that are equal to 0 on $\R^d\backslash \O$ up to a set of zero capacity. 
For any $\O\in \mathscr O$ we set 
\begin{equation}\label{Eq:RayleighStokes}
\lambda_1(\O):=\min_{\substack{\u\in W^{1,2}_0(\O;\R^3)\color{black}\\ \nabla\cdot \u=0\text{ in }\O\,, \u \neq 0}}\frac{\int_\O \Vert \n \u\Vert^2}{\int_\O \Vert \u\Vert^2}.\end{equation}
It is clear that for any $t>0$ and any $\O\in \mathscr O$ we have $\lambda_1(t\O)=\frac1{t^2}\lambda_1(\O)$, and we thus introduce the following scale invariant functional:
\[ \mathcal F:\mathscr O\ni \O\mapsto |\O|^{\frac2d}\lambda_1(\O).\] The shape optimisation problem under consideration is 
\begin{equation}\label{Eq:Pv}\fbox{$\displaystyle
\inf_{\O\in \mathscr O}\left(\mathcal F(\O):=|\O|^{\frac2d}\lambda_1(\O)\right).$}\end{equation}

\subsection{Main results of the paper}

\paragraph{Existence of an optimal shape.}
We begin with the following:
\begin{theorem}\label{Th:Existence}
The variational problem \eqref{Eq:Pv} has a solution $\O^*$.
\end{theorem}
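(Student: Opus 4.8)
The plan is to adapt Bucur's concentration-compactness approach for shape optimisation to the Stokes setting. We work with the scale-invariant functional $\mathcal F$ and take a minimising sequence $\{\O_n\}$ of quasi-open sets. By the scaling invariance of $\mathcal F$ we may normalise $|\O_n| = 1$ for every $n$. The core object is the sequence of (resolvent-type) operators attached to the Stokes problem on $\O_n$, or equivalently the sequence of first eigenfunctions $\u_n \in W^{1,2}_0(\O_n;\R^d)$, divergence-free, normalised by $\int \Vert\u_n\Vert^2 = 1$, with $\int \Vert\n\u_n\Vert^2 = \lambda_1(\O_n) \to \inf \mathcal F$. The gradient bound gives weak $W^{1,2}(\R^d;\R^d)$-compactness, so up to a subsequence $\u_n \rightharpoonup \u_\infty$ weakly in $W^{1,2}$ and strongly in $L^2_{loc}$; the divergence-free constraint passes to the limit since $\n\cdot$ is weakly continuous. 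The candidate optimal set is then $\O^* := \{\Vert\u_\infty\Vert > 0\}$ (a quasi-open set), and the whole difficulty is to rule out loss of mass at infinity and to show $\int\Vert\u_\infty\Vert^2 = 1$.

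**Next I would** invoke the concentration-compactness alternative applied to the sequence of measures $\mu_n := \Vert\u_n\Vert^2 \, dx$ (probability measures on $\R^d$), yielding the usual trichotomy: compactness, vanishing, or dichotomy. \emph{Vanishing} is excluded because it would force $\u_n \to 0$ in $L^p_{loc}$ for subcritical $p$ and, combined with the uniform gradient bound, would contradict the normalisation via a concentration-type (Lions) lemma — one uses that $\lambda_1(\O_n)$ stays bounded. \emph{Dichotomy} is the delicate case: one would split $\u_n$ using a cut-off $\varphi_R(\cdot - y_n)$ into pieces $\u_n^{(1)}, \u_n^{(2)}$ supported in regions whose distance diverges, show that the Rayleigh quotients of the pieces are each at least $\inf\mathcal F - o(1)$ up to the volume normalisation, and then derive a contradiction with strict subadditivity of the map $V \mapsto \inf\{\mathcal F\text{-value over sets of volume }V\}$, exactly as in the scalar Faber-Krahn existence proofs. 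The subadditivity is strict precisely because of the scaling $\lambda_1(t\O) = t^{-2}\lambda_1(\O)$: splitting the volume into two positive pieces strictly increases the sum of eigenvalues.

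**The main obstacle** — and the point the authors flag in the introduction — is that cutting off a divergence-free vector field destroys the divergence-free constraint: $\n\cdot(\varphi\,\u) = \n\varphi\cdot\u \neq 0$ in general. So the truncated pieces $\u_n^{(j)}$ are \emph{not} admissible competitors for $\lambda_1$ on their supports, and the naive splitting argument fails. The fix is a Bogovskii-type correction: on an annular region $A_n$ where the cut-off transitions, solve $\n\cdot\mathbf{w}_n = -\n\varphi\cdot\u_n$ with $\mathbf{w}_n \in W^{1,2}_0(A_n;\R^d)$ and the quantitative estimate $\Vert\mathbf{w}_n\Vert_{W^{1,2}(A_n)} \lesssim \Vert\n\varphi\cdot\u_n\Vert_{L^2(A_n)}$, the constant depending only on the shape of the annulus (uniform in $n$ after rescaling). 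Because $\int_{A_n}\n\varphi\cdot\u_n \, dx = \int_{A_n}\n\cdot(\varphi\u_n)\,dx$ need not vanish, one first subtracts a suitable harmonic-type field or arranges the cut-off so the compatibility condition $\int_{A_n}\n\varphi\cdot\u_n = 0$ holds (e.g. by choosing the splitting radius among a range of values via a mean-value argument, Lions' classical trick). Then $\tilde\u_n^{(j)} := \varphi_j\u_n + \mathbf{w}_n^{(j)}$ is genuinely divergence-free, and since $\mathbf{w}_n$ is small in $W^{1,2}$ (its $L^2$-mass controlled by the $L^2$-mass of $\u_n$ on the thin transition annulus, which $\to 0$ in the dichotomy scenario), the Rayleigh quotients are perturbed only by $o(1)$. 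With this correction in hand the dichotomy contradiction goes through, compactness holds, $\int\Vert\u_\infty\Vert^2 = 1$, $\u_\infty$ is admissible on $\O^* = \{\u_\infty \neq 0\}$ with $|\O^*|\le 1$, and lower semicontinuity of the Dirichlet energy gives $\mathcal F(\O^*) \le \inf\mathcal F$, so $\O^*$ is optimal.
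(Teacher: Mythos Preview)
Your overall strategy---concentration--compactness applied to the first eigenfunctions, ruling out vanishing via the uniform eigenvalue bound, and ruling out dichotomy to force compactness---matches the paper's. The substantive difference lies in how the incompressibility constraint is handled in the dichotomy step. You propose a Bogovskii correction on the transition annulus so that each truncated piece is genuinely divergence-free and can serve as a competitor. The paper instead leaves the cut-off pieces $\v_{k,i}:=\psi_{k,i}\u_k$ uncorrected and argues only that the \emph{limit} $\v_1$ of the concentrating piece is divergence-free in the sense of distributions: since $\nabla\cdot\v_{k,1}=\nabla\psi_{k,1}\cdot\u_k$ is supported in an annulus lying at distance at least $\eta_k\to\infty$ from the region where the $L^2$-mass concentrates, for any fixed $\phi\in\mathscr C^\infty_c(\R^d)$ one eventually has $\int\langle\v_{k,1},\nabla\phi\rangle=0$, and this passes to the limit. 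This sidesteps Bogovskii entirely. Your route should also work, but two remarks are in order. First, your worry about the compatibility condition is unfounded: since $\nabla\varphi$ is supported in the annulus $A_n$ and $\nabla\cdot\u_n=0$ globally, one has $\int_{A_n}\nabla\varphi\cdot\u_n=\int_{\R^d}\nabla\varphi\cdot\u_n=-\int_{\R^d}\varphi\,\nabla\cdot\u_n=0$ automatically, so no mean-value trick is needed. Second, the Bogovskii constant on an annulus depends on its aspect ratio, and the concentration--compactness lemma gives no control on $\eta_k/R_k$, so your claimed uniformity ``after rescaling'' would require a separate argument that the paper's device simply avoids.
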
 
The proof of this theorem hinges, in this vectorial setting, on the approach to the Faber-Krahn inequality of Bucur \& Freitas \cite[Proposition 3.1]{BucurFreitas} or Bucur \& Varchon \cite{BucurVarchon}, which in turn relies on the concentration-compactness principle of Lions \cite{LionsCC}.  However, because of the incompressibility constraint, we have to modify the concentration-compactness approach, which in turns prohibits us from guaranteeing that $\O^*$ is bounded.\color{black}

Of course, if $\mathscr O$ were to be replaced with $\mathscr O_D:=\{\O\in \mathscr O\,, \mathscr O\subset D\}$ for a given compact set $D$ (``box constraint"), the existence of a minimiser would follow from an adaptation of the Buttazzo-Dal Maso theorem \cite{Buttazzo_1993}, although one should be cautious when handling the zero-divergence constraint. For the sake of completeness, we include this result in Appendix \ref{Ap:DalMaso}. We refer to section \ref{Se:Biblio} for more references on shape optimisation without box constraints.

\begin{remark}
When working in the two-dimensional case, if the domain $\O$ is simply connected,  we can introduce a potential $\psi$ such that the velocity
$\u$ can be written $\u=(-\partial_y \psi, \partial_x \psi)^\top $, so that the first eigenvalue of the Stokes operator coincides with the first eigenvalue of the buckling problem \cite[Chapter 11]{Henrot_2006}.  {The function  $\psi$  is called the {\it stream function};  we refer for instance to \cite[Lemma~2.5]{zbMATH03906935} for its existence and uniqueness. Its existence is a special case of the Helmoltz-Hodge decomposition in dimension 2.} Our existence result is thus linked to the theorem of Ashbaugh \& Bucur \cite{Ashbaugh_2003}, which asserts the existence of an optimal domain for the buckling problem in the class of simply connected domains. We will comment more on this aspect of the problem in section \ref{Se:Biblio}.
\end{remark}

\paragraph{Optimality conditions and (semi-)differentials of eigenvalues.}

The local optimality of a shape will be investigated by means of Hadamard perturbations \cite[Chapter 5]{zbMATH06838450}. This already requires some regularity of the shape under consideration. To be more precise: if $\O$ is a bounded domain with $\mathscr C^2$ boundary, if $\Phi\in W^{3,\infty}(\R^d;\R^d)$ we define, for any $t\in (-1;1)$ small enough, 
\[ \O_{t\Phi}:=(\mathrm{Id}+t\Phi)\O.\] This definition is meaningful as, for any $t$ small enough, $(\mathrm{Id}+t\Phi)$ is a smooth diffeomorphism. For a given shape functional $F$ and a given shape $\O$, we say that $F$ is differentiable at $\O$ if, for any $\Phi\in  W^{3,\infty}(\R^d;\R^d)$, the limit
\[ \langle dF(\O),\Phi\rangle:=\lim_{t\to 0}\frac{F(\O_{t\Phi})-F(\O)}t\] exists, and if it is a linear form in $\Phi$. In this case, this limit is called the first-order shape derivative of $F$ at $\O$ in the direction $\Phi$.

 The functional $F$ might not be differentiable (typically, when considering the shape derivative of a multiple eigenvalue, as can be the case here), but we can usually define the \textbf{semi-differential} of $F$ at $\O$ in the direction $\Phi$ as
 \[
 \langle \partial F(\O),\Phi\rangle:=\lim_{t\searrow 0}\frac{F(\O_{t\Phi})-F(\O)}t.
 \] 
{ whenever the limit above exists, and defines a linear form in $\Phi\in W^{3,\infty}(\R^d;\R^d)$.}
 
 Such semi-differentials are of particular importance when dealing with multiple eigenvalues. A systematic approach was developed by several authors to handle such situations; while we refer to section \ref{Se:Biblio} for precise references, let us point to \cite{Caubet_2021}, which is the most related to our work as it deals with semi-differentials for the linear elasticity system.
 
The first-order optimality conditions for \eqref{Eq:Pv} read: if $\O^*$ is a smooth optimal set  then, for any smooth vector field $\Phi$, there holds
\begin{equation}\label{Eq:FirstOrderOC}
\langle dF(\O^*\color{black}),\Phi\rangle\geq\color{black} 0\text{ if $F$ is differentiable, }\quad \langle \partial F(\O^*),\Phi\rangle\geq 0\text{ if $F$ is semi-differentiable.}\end{equation}
{The first condition above, called the {\it Euler inequality}, can often be reduced to an equality depending on the nature of the admissible set of shapes. This will be made precise in what follows.}

 We say that $F$ is twice differentiable at $\O$ if it is differentiable and if, for any $\Phi\in W^{3,\infty}(\R^d;\R^d)$ the limit
\[
\langle d^2F(\Omega){\Phi},{\Phi}\rangle=\lim_{t\to 0}\frac{ F(\O_{t\Phi})-F(\O)-t\langle dF(\Omega),{\Phi}\rangle}{t^2}
\] exists.

\paragraph{Local optimality of $\B_2$ in $\R^2$.} The main result here is the following:
\begin{theorem}\label{Th:Optimality2d}
The first eigenvalue $\lambda_1(\B_2)$ is simple. Consequently, $\mathcal F$ is twice differentiable at $\B_2$ and there holds:
\begin{enumerate}
\item For any $\Phi \in W^{3,\infty}(\R^d,\R^d)$, 
\[ \langle d\mathcal F(\B_2),\Phi\rangle=0.\]
\item There exists a constant $c_0>0$ such that, for any $\Phi\in W^{3,\infty}(\R^d;\R^d)$ such that $\langle \Phi,\nu\rangle \in \{1,\cos(\cdot),\sin(\cdot)\}^\perp$, where $\perp$ denotes the orthogonal for the $L^2(\partial \B)$ inner product,
\[ \langle d^2\mathcal F(\B_2){\Phi},{\Phi}\rangle \geq c_0\Vert \langle \Phi,\nu\rangle\Vert_{W^{\frac12,2}(\partial \B_2)}^2\] where $\nu$ is the normal vector on $\partial \B_2$.
\end{enumerate}

\end{theorem}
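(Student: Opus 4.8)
The plan is to treat the three assertions in order, using throughout the reduction to the buckling problem from the Remark.

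\emph{Simplicity of $\lambda_1(\B_2)$.} Since $\B_2$ is simply connected, writing $\u=(-\partial_y\psi,\partial_x\psi)^\top$ with a stream function $\psi\in W^{2,2}_0(\B_2)$ turns \eqref{Eq:RayleighStokes} into the first buckling eigenvalue, $\lambda_1(\B_2)=\min_\psi \int_{\B_2}\Vert\n^2\psi\Vert^2/\int_{\B_2}\Vert\n\psi\Vert^2=\min_\psi\int_{\B_2}(\Delta\psi)^2/\int_{\B_2}\Vert\n\psi\Vert^2$, the last identity following from an integration by parts. Separating variables $\psi=f_n(r)e^{in\theta}$ and using that the solutions of $\Delta(\Delta+\lambda)\psi=0$ regular at the origin are spanned by $r^{|n|}e^{in\theta}$ and $J_{|n|}(\sqrt\lambda\,r)e^{in\theta}$, the clamped conditions $\psi(1)=\partial_r\psi(1)=0$ give the characteristic equation $\sqrt\lambda\,J_{|n|}'(\sqrt\lambda)=|n|\,J_{|n|}(\sqrt\lambda)$; comparing its least positive roots over $n$ shows that the infimum is attained only at $n=0$, where it reduces to $J_1(\sqrt\lambda)=0$, so $\lambda_1(\B_2)=j_{1,1}^2$ with a one–dimensional eigenspace (a radial $\psi$, i.e. a rotational velocity field). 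Hence $\lambda_1(\B_2)$ is simple.

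\emph{Vanishing of $d\mathcal F(\B_2)$.} Simplicity makes $\O\mapsto\lambda_1(\O)$, hence $\mathcal F$, shape differentiable near $\B_2$ by the standard theory (see \cite[Chapter 5]{zbMATH06838450}), with eigenfunction shape derivative $\u'=-\langle\Phi,\nu\rangle\,\partial_\nu\u$ on $\partial\O$; this is compatible with $\n\cdot\u'=0$ since $\partial_\nu\u\cdot\nu=0$ on $\partial\O$ (a consequence of $\u=0$ on $\partial\O$ and $\n\cdot\u=0$). An integration by parts using the eigenequation yields the Hadamard formula $\langle d\lambda_1(\O),\Phi\rangle=-\int_{\partial\O}\Vert\partial_\nu\u\Vert^2\langle\Phi,\nu\rangle$, the pressure contributing nothing. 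At $\B_2$, rotational equivariance of the (essentially unique) eigenfunction forces $\Vert\partial_\nu\u\Vert^2$ to be constant on $\partial\B_2$, so $\langle d\lambda_1(\B_2),\Phi\rangle=-c\int_{\partial\B_2}\langle\Phi,\nu\rangle$ with $c>0$; combining with $\langle d(|\cdot|^{2/d})(\B_2),\Phi\rangle=\tfrac2d|\B_2|^{\frac2d-1}\int_{\partial\B_2}\langle\Phi,\nu\rangle$ gives $\langle d\mathcal F(\B_2),\Phi\rangle=\gamma\int_{\partial\B_2}\langle\Phi,\nu\rangle$ for a constant $\gamma$. Testing with the dilation field $\Phi=\mathrm{Id}$ and using the scale invariance $\mathcal F((1+t)\B_2)\equiv\mathcal F(\B_2)$ forces $\gamma=0$, hence $\langle d\mathcal F(\B_2),\Phi\rangle=0$ for every $\Phi$, which is the first item.

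\emph{Reduction of the second derivative.} Because $d\mathcal F(\B_2)$ vanishes identically, the structure theorem for shape derivatives (see \cite[Chapter 5]{zbMATH06838450}) shows that $\langle d^2\mathcal F(\B_2)\Phi,\Phi\rangle$ depends on $\Phi$ only through $h:=\langle\Phi,\nu\rangle\in W^{\frac12,2}(\partial\B_2)$; write $Q(h)$ for this quadratic form. Differentiating $\mathcal F=|\cdot|^{2/d}\lambda_1$ twice and inserting the first–order data, $Q(h)$ is the sum of the explicit second variation of the volume term and $|\B_2|^{2/d}\langle d^2\lambda_1(\B_2)\Phi,\Phi\rangle$, the latter being read off the second–order Hadamard formula for the Stokes eigenvalue, in which $\u'$ solves the resolvent system $-\Delta\u'+\n p'=\lambda_1(\B_2)\u'$, $\n\cdot\u'=0$ in $\B_2$, $\u'=-h\,\partial_\nu\u$ on $\partial\B_2$, together with $\int_{\B_2}\u\cdot\u'=0$ (well posed, since $\int_{\partial\B_2}h\,\partial_\nu\u\cdot\nu=0$). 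Expanding $h=a_0+\sum_{n\ge1}(a_n\cos n\theta+b_n\sin n\theta)$ and using rotational equivariance, $Q$ diagonalizes as $Q(h)=\sum_{n\ge1}\mu_n(a_n^2+b_n^2)$ with $\mu_0=\mu_1=0$, since the modes $n\in\{0,1\}$ are precisely the normal traces of dilations and translations, under which $\mathcal F$ is invariant; each $\mu_n$ with $n\ge2$ is obtained by solving the resolvent system above in the $n$-th Fourier sector in terms of Bessel functions.

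\emph{Conclusion and main obstacle.} Under the orthogonality assumption $h\in\{1,\cos(\cdot),\sin(\cdot)\}^\perp$ one has $Q(h)=\sum_{n\ge2}\mu_n(a_n^2+b_n^2)$, so it remains to prove $\mu_n\ge c_0(1+n^2)^{1/2}$ for all $n\ge2$, which gives $Q(h)\ge c_0\Vert h\Vert_{W^{1/2,2}(\partial\B_2)}^2$ and thus the second item. This splits into $(i)$ $\mu_n>0$ for each fixed $n\ge2$, checked from the explicit Bessel expressions (coherent with the local form of Szeg\H{o}'s buckling inequality, \cite[Chapter 11]{Henrot_2006}), and $(ii)$ the sharp asymptotics $\mu_n\sim c_0 n$ as $n\to\infty$, which reflects that the top-order part of the shape Hessian at $\B_2$ is a positive, first-order pseudodifferential operator on $\partial\B_2$ (of Dirichlet-to-Neumann type, with symbol $|n|$ on the $n$-th mode). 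Together $(i)$ and $(ii)$ yield the uniform lower bound. I expect step $(ii)$ — establishing the second-order Hadamard formula for the Stokes (equivalently buckling) eigenvalue, solving for $\u'$ mode by mode, and extracting both the strict positivity and the exact order-one growth of $\mu_n$ — to be the main obstacle; by contrast the simplicity and the vanishing of $d\mathcal F(\B_2)$ are comparatively soft.
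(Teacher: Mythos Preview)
Your plan matches the paper's approach closely: simplicity via the buckling reduction and separation of variables, the first-order Hadamard formula $\langle d\lambda_1(\O),\Phi\rangle=-\int_{\partial\O}\Vert(\n\u)\nu\Vert^2\langle\Phi,\nu\rangle$, and diagonalisation of the Hessian in Fourier modes using the stream function of $\u'_\Phi$. One genuine (and nice) difference is your argument for item~1: you use rotational equivariance to get $\Vert(\n\u)\nu\Vert^2=\mathrm{const}$ and then kill the remaining constant by testing the scale-invariant $\mathcal F$ against the dilation field, whereas the paper computes the constant explicitly and checks the identity $|\B_2|\cdot\Vert(\n\u_{\B_2})\nu\Vert^2=\lambda_1(\B_2)$ by hand. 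Your route is shorter and more robust; the paper's buys the explicit value, which it reuses later when assembling the second variation.

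Where the proposal falls short is exactly where you say it does: you have not carried out the mode-by-mode computation that turns the outline of item~2 into a proof. In the paper this is the real content. Writing $\psi'_\Phi$ for the stream function of $\u'_\Phi$, one has $\Delta(\Delta+\lambda_1)\psi'_\Phi=0$, so in the $n$-th sector $\psi'_\Phi=a_nJ_n(j_{1,1}r)+c_nr^n$ (and the sine analogue); the clamped-type boundary data coming from $\u'_\Phi=-h\,(\n\u_{\B_2})\nu$ give a $2\times2$ linear system for $(a_n,c_n)$ whose determinant is $nJ_n(j_{1,1})-j_{1,1}J_n'(j_{1,1})=j_{1,1}J_{n+1}(j_{1,1})$. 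Plugging back and simplifying with the Bessel ODE and the recursion $xJ_n'(x)=nJ_n(x)-xJ_{n+1}(x)$ yields the closed form
\[
\langle d^2\mathcal F(\B_2)\Phi,\Phi\rangle=2\pi j_{1,1}^4\sum_{n\ge2}\gamma_n(\alpha_n^2+\beta_n^2),\qquad \gamma_n=\frac{J_n(j_{1,1})}{j_{1,1}J_{n+1}(j_{1,1})}.
\]
Positivity of $\gamma_n$ for $n\ge2$ is then immediate (the first positive zero of $J_m$ exceeds $j_{1,1}$ for $m\ge2$), and $J_n(x)\sim (x/2)^n/n!$ gives $\gamma_n\sim 2(n+1)/j_{1,1}^2$, which is the order-one growth you anticipated. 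Until you produce this computation (or an equivalent one), what you have is a correct strategy, not yet a proof of item~2.
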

This theorem is to be expected as it is a generalisation of the classical Faber-Krahn inequality to the case of the Stokes operator. The condition $\langle \Phi,\nu\rangle\in\{1,\cos,\sin\}^\perp$ reflects the dilation and translation invariance of the functional $\mathcal F$.

\begin{remark}
The differentiability of $\mathcal F$ at $\B_2$ is immediate if $\lambda_1(\B_2)$ is a simple eigenvalue. We refer to \cite[Chapter 5]{zbMATH06838450}, or to the implicit function theorem of Mignot, Murat \& Puel \cite{zbMATH03656460} (for its application in the context of shape derivatives, see \emph{e.g.} \cite{DambrineKateb}).
\end{remark}

\begin{remark} \label{rk:dim1913}
{If a minimiser $\Omega^*$ of \eqref{Eq:Pv} in dimension 2 is smooth and simply connected, then $\Omega^*$ is necessarily a disc.  The proof is due to N.B. Willms and H.F. Weinberger. They did not publish it, but one can find it in the paper of B. Kawohl in \cite{kawohl1998optimal}. We also refer to \cite[Section~11.3.4]{Henrot_2006}. This result is proved by establishing an equivalence between the minimisation of the first eigenvalue of the Stokes-Dirichlet operator and the minimisation of the solution of the buckling problem. Detailed explanations of the buckling problem and the equivalence can be found in section \ref{Se:Biblio}. Once this equivalence is established, the proof that a regular and simply connected minimiser is a disc results from the study of an overdetermined problem.}

{Finally, let us point to the recent article \cite{franzina2024overdetermined}, which was brought to our attention during the revision of this paper: an interesting discussion is given, linking several overdetermined problems including the one mentioned above. }
\end{remark}

\paragraph{The Faber-Krahn inequality is not true for the Stokes operator in $\R^3$.}
A much more surprising result is that \emph{the Faber-Krahn inequality is not true for the Stokes operator in $\R^3$.} To the best of our knowledge, this is the first published result of this kind, where the dimension has an influence on the optimality of the unit ball for the lowest eigenvalue of a differential operator.
\begin{remark}
Shortly after uploading a preprint of this paper, W. Gerner has informed us that similar (unpublished) results \cite{GernerCommunication} had been obtained for some related spectral optimisation problems in the context of Riemaniann geometry, where the ball is optimal in the two-dimensional case, but not in the three-dimensional one. In the euclidean setting, the examples he gave were of a Stokes-like operator, with the additional boundary condition $\langle\curl(\u),\nu\rangle=0$ on $\partial \O$, as well as the requirement that $\u$ be orthogonal to harmonic fields (that is, $\div$ and $\curl$ free fields).  
\end{remark}

More precisely, we have:

\begin{theorem}\label{Th:NonOptimality3d}
The first eigenvalue $\lambda_1(\B_3)$ has multiplicity 3. $\mathcal F$ is semi-differentiable {at $\B_3$}, but does not satisfy the first-order optimality conditions \eqref{Eq:FirstOrderOC}: there exists a vector field {$\Phi\in W^{3,\infty}(\R^3;\R^3)$} such that 
\[ \langle \partial \mathcal F(\B_3),\Phi\rangle<0.\] In particular, $\B_3$ does not solve \eqref{Eq:Pv}.
\end{theorem}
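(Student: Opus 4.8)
The plan is to compute the semi-differential of $\mathcal{F}$ at $\mathbb{B}_3$ explicitly and exhibit a deformation along which it is negative, exploiting the multiplicity-$3$ structure of the first eigenvalue. First I would establish the claim that $\lambda_1(\mathbb{B}_3)$ has multiplicity $3$: this follows from an explicit separation-of-variables analysis of \eqref{Eq:Eigenequation} in the ball using spherical coordinates and the decomposition of divergence-free vector fields into poloidal and toroidal parts (spherical Bessel functions times vector spherical harmonics). The lowest eigenvalue is attained on a $3$-dimensional space spanned by rotations of a single profile; concretely one writes down an eigenfield $\u_0$ associated with the $\ell=1$ mode and obtains the whole eigenspace by applying the $SO(3)$-action, which acts transitively on a $2$-sphere of "directions'' and yields multiplicity $3$. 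The key structural input is that the eigenspace $E$ carries the natural action of $SO(3)$ as its standard $3$-dimensional representation.

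Next I would recall the Hadamard-type formula for the semi-differential of a multiple eigenvalue. For a deformation $\Omega_{t\Phi}=(\mathrm{Id}+t\Phi)\mathbb{B}_3$, with $V:=\langle\Phi,\nu\rangle$ the normal component on $\partial\mathbb{B}_3$, the classical result (see the references to \cite{Caubet_2021} and \cite[Chapter 5]{zbMATH06838450} cited above) gives
\[
\langle \partial \lambda_1(\mathbb{B}_3),\Phi\rangle \;=\; \min_{\substack{\u\in E\\ \int_{\mathbb{B}_3}\Vert\u\Vert^2=1}}\;\left(-\int_{\partial\mathbb{B}_3}\Vert\partial_\nu\u\Vert^2\, V\right),
\]
the boundary term being the shape derivative of the Rayleigh quotient, and the $\min$ over the eigenspace encoding non-differentiability. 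Scale and volume factors then produce the corresponding expression for $\mathcal{F}$ via the product rule, using that at the ball $|\mathbb{B}_3|^{2/3}\lambda_1$ is stationary under dilations (so the volume term contributes a correction proportional to $\int_{\partial\mathbb{B}_3}V$ that vanishes for volume-preserving directions and can be handled in general). I would then reduce to proving that one can choose $V$ so that the $3\times 3$ symmetric matrix
\[
Q(V)\;=\;\left(\int_{\partial\mathbb{B}_3}\langle\partial_\nu\u_i,\partial_\nu\u_j\rangle\,V\right)_{1\le i,j\le 3}
\]
(in an orthonormal basis $\u_1,\u_2,\u_3$ of $E$) is \emph{not} positive semidefinite, equivalently $Q(V)$ has a negative eigenvalue for some admissible $V$; then the $\min$ above is strictly negative, contradicting \eqref{Eq:FirstOrderOC}.

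The heart of the argument is therefore the computation of the "boundary traces'' $\partial_\nu\u_i$ and of the quadratic form $Q(V)$. Using the explicit radial profiles, $\partial_\nu\u_i$ on $\partial\mathbb{B}_3$ is a vector field built from the vector spherical harmonics of degree $1$; the integrand $\langle\partial_\nu\u_i,\partial_\nu\u_j\rangle$ is then a fixed quadratic expression in degree-$1$ spherical harmonics, hence a linear combination of spherical harmonics of degrees $0$ and $2$. Choosing $V$ to be a single degree-$2$ spherical harmonic kills the trace part and makes $Q(V)$ traceless; if moreover $Q(V)\neq 0$ — which I expect to hold by a direct computation of one matrix entry, and which is where the dimension $d=3$ enters, since in $d=2$ the analogous degree-$2$ harmonics are exactly $\cos(2\theta),\sin(2\theta)$ and the relevant coupling vanishes by the parity/orthogonality structure forced by simplicity — then a traceless nonzero symmetric matrix necessarily has a negative eigenvalue, and we are done. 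The main obstacle is thus the explicit evaluation: getting the radial Bessel profiles right, computing $\partial_\nu\u$ on the sphere, and verifying that the resulting degree-$2$ component of $Q$ does not accidentally vanish. I would organise this as a lemma isolating the structure of $E$ and a computational lemma giving $Q(V)$ for $V$ a second spherical harmonic, after which the non-optimality, and hence the failure of the Faber-Krahn inequality in $\R^3$, follows immediately, together with the "in particular'' claim that $\mathbb{B}_3$ does not solve \eqref{Eq:Pv}.
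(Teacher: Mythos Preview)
Your proposal is correct and mirrors the paper's direct proof almost step for step: they too form the $3\times 3$ symmetric matrix $\widehat{M}_\Phi$ (your $Q(V)$ together with the volume correction), observe that its trace vanishes identically, and then exhibit $\Phi(x,y,z)=z\,\vec{e}_y$ so that the off-diagonal entry $\int_{\partial\mathbb{B}_3} yz\,\langle\Phi,\nu\rangle\neq 0$, forcing a negative eigenvalue. The ``explicit evaluation'' you flag as the main obstacle is therefore a one-line check, and your restriction to degree-$2$ harmonics is just a special case of their general trace computation (since the trace of $Q(V)$ is $2\psi'(1)^2\int_{\partial\mathbb{B}_3}V$, exactly cancelled by the volume term).
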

\paragraph{Necessary optimality conditions in $\R^3$.}
In fact, Theorem \ref{Th:NonOptimality3d}, which we singled out, is an easy consequence of the following optimality conditions:
\begin{theorem}\label{Th:Necessary}
Let $\O^*$ be a solution of \eqref{Eq:Pv} {with $d=3$}. 
 If $\O^*$ has a $\mathscr C^{2,\alpha}$ boundary with $\alpha\in (0,1)$ then $\lambda_1(\O^*)$ is a simple eigenvalue. Furthermore, if $\u$ is an associated first eigenfunction, then $\Vert (\n \u)\nu\Vert$ is constant on $\partial \O$.
\end{theorem}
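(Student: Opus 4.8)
The plan is to exploit the first-order optimality conditions satisfied by an optimal set $\O^*$, and to show that these conditions are incompatible with a multiple first eigenvalue. First I would establish the shape-derivative formula for $\mathcal F$ in the simple-eigenvalue case and in the multiple-eigenvalue case. If $\lambda_1(\O^*)$ is simple with eigenfunction $\u$ (normalised by $\int_{\O^*}\Vert\u\Vert^2=1$) and associated pressure $p$, the classical Hadamard formula for the Dirichlet-Stokes operator should give
\[
\langle d\lambda_1(\O^*),\Phi\rangle=-\int_{\partial\O^*}\Vert(\n\u)\nu\Vert^2\,\langle\Phi,\nu\rangle,
\]
the boundary term being precisely the squared norm of the normal derivative of $\u$ (here one uses $\u=0$ and $\div\u=0$ on $\partial\O^*$, so that $\n p$ does not contribute to the boundary integrand — this is the place where the incompressibility condition enters and must be handled with care). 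Combining this with the derivative of the volume $\langle d(|\cdot|^{2/d})(\O^*),\Phi\rangle=\tfrac2d|\O^*|^{2/d-1}\int_{\partial\O^*}\langle\Phi,\nu\rangle$, the Euler--Lagrange equation $\langle d\mathcal F(\O^*),\Phi\rangle=0$ for all $\Phi$ (the optimality is an equality since we may perturb both inward and outward while rescaling, using scale invariance) forces
\[
\Vert(\n\u)\nu\Vert^2=\text{const on }\partial\O^*,
\]
which is the second assertion of the theorem.

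It remains to prove simplicity. Here I would argue by contradiction: suppose $\lambda_1(\O^*)=:\lambda$ has multiplicity $m\geq2$, with an $L^2$-orthonormal eigenbasis $\u_1,\dots,\u_m$ and pressures $p_1,\dots,p_m$. The shape functional is then only semi-differentiable, and the standard theory (as in \cite{Caubet_2021} for elasticity, adapted to Stokes) gives, for a perturbation $\Phi$,
\[
\langle\partial\lambda_1(\O^*),\Phi\rangle=\min\Big\{-\int_{\partial\O^*}\big\langle M(\Phi)\,\xi,\xi\big\rangle\ :\ \xi\in\mathbb{S}^{m-1}\Big\},
\]
where $M(\Phi)$ is the $m\times m$ symmetric matrix with entries $\int_{\partial\O^*}\langle(\n\u_i)\nu,(\n\u_j)\nu\rangle\,\langle\Phi,\nu\rangle$. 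For $\O^*$ to be optimal this quantity, together with the volume term, must be $\geq0$ for every admissible $\Phi$, which (again using the ability to take $\langle\Phi,\nu\rangle$ of either sign subject to the volume and translation normalisations) means that the matrix $M(\Phi)-c\,\big(\int_{\partial\O^*}\langle\Phi,\nu\rangle\big)\,I_m$ must be $\preceq0$ — i.e. for every $\Phi$ with $\int_{\partial\O^*}\langle\Phi,\nu\rangle=0$ the matrix $M(\Phi)$ is \emph{both} $\succeq0$ and $\preceq0$, hence $M(\Phi)=0$. Since $\langle\Phi,\nu\rangle$ ranges over a dense subset of $L^2_0(\partial\O^*)$ as $\Phi$ varies over $W^{3,\infty}$ (here the $\mathscr C^{2,\alpha}$ regularity of $\partial\O^*$ is used so that all the boundary integrands are continuous functions), this yields the pointwise identities
\[
\langle(\n\u_i)\nu,(\n\u_j)\nu\rangle=\kappa\,\delta_{ij}\quad\text{on }\partial\O^*
\]
for some constant $\kappa\geq0$ (the constant coming from the part of $\langle\Phi,\nu\rangle$ proportional to $1$, handled via the volume term).

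To reach a contradiction I would then argue as follows. If $\kappa=0$ then $(\n\u_i)\nu\equiv0$ on $\partial\O^*$ for each $i$, so each $\u_i$ together with $p_i$ satisfies the overdetermined Stokes system with both Dirichlet and Neumann data vanishing; a unique-continuation / Holmgren-type argument (valid in the $\mathscr C^{2,\alpha}$ category) then forces $\u_i\equiv0$, a contradiction. If $\kappa>0$, then the boundary vector fields $\er_i:=\kappa^{-1/2}(\n\u_i)\nu$ form, at \emph{every} point of $\partial\O^*$, an orthonormal family of $m$ vectors in $\R^3$ (using $\u_i=0$ on the boundary, $(\n\u_i)\nu$ is in fact the full boundary value of the first-order jet and its direction is unconstrained). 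This is the crux and the main obstacle: I expect the contradiction to come from a topological or linear-algebraic obstruction to having $m\geq2$ such globally defined, pointwise-orthonormal, non-vanishing tangentless vector fields along a closed surface, combined with the divergence-free constraint $\div\u_i=0$ which on $\partial\O^*$ (since the tangential derivatives of $\u_i$ vanish) reads $\langle(\n\u_i)\nu,\nu\rangle=0$, i.e. each $(\n\u_i)\nu$ is \emph{tangent} to $\partial\O^*$. So in fact we would need $m\geq2$ pointwise-orthonormal tangent vector fields on a closed surface in $\R^3$; for $m=2$ this would be a nowhere-zero tangent field (hence $\partial\O^*$ a torus, already a strong restriction and incompatible with, e.g., connectedness assumptions coming from the existence theory), and for $m\geq3$ it is impossible since the tangent space is $2$-dimensional. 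This dimensional count — two pointwise-orthogonal \emph{tangent} vectors exhaust the tangent plane, leaving no room and eventually forcing a contradiction with the eigenfunction structure — is, I anticipate, how simplicity is forced in $d=3$; making the $m=2$ case fully rigorous (ruling out the torus, or rather showing even the torus case fails via a finer use of the Stokes equations) is the delicate point, and is presumably where the connection to \cite{Gerner_2023} and the $\curl$-eigenvalue analysis is invoked.
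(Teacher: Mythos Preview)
Your overall strategy matches the paper's closely: you correctly derive the semi-differential of $\lambda_1$ in the multiple case, set up the matrix $M(\Phi)$, and use the $\pm\Phi$ argument to force $M(\Phi)=0$ for all $\Phi$, hence the pointwise orthogonality relations $\langle(\n\u_i)\nu,(\n\u_j)\nu\rangle=\kappa\,\delta_{ij}$ on $\partial\O^*$ with a \emph{positive} constant $\kappa=\tfrac23\lambda_1(\O^*)/|\O^*|$ (so the $\kappa=0$ unique-continuation discussion is moot). You then correctly observe that $(\n\u_i)\nu$ is tangent (by $\div\u_i=0$), which immediately kills $m\geq3$ since the tangent plane is two-dimensional. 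This is exactly how the paper argues up to this point.

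The genuine gap is the one you honestly flag: the case $m=2$. Merely noting that each connected component of $\partial\O^*$ must be a torus is not a contradiction, and the paper does \emph{not} rule out the torus by topology. Instead, setting $\v_i:=-(\n\u_i)\nu$ (tangent, pairwise orthogonal, of constant norm on $\partial\O^*$), the paper shows by a short calculation that the Lie bracket $[\v_1,\v_2]$ is \emph{normal} to $\partial\O^*$: indeed, using that $\langle\v_1,\v_2\rangle$ and $\Vert\v_i\Vert^2$ are constant along the surface and that the $\v_i$ are tangent, one gets $\langle\v_1,(\n\v_2)\v_1\rangle=\langle\v_2,(\n\v_2)\v_1\rangle=0$, and symmetrically for $(\n\v_1)\v_2$, so both directional derivatives are normal and hence so is their difference $[\v_1,\v_2]$. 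But the Lie bracket of two tangent fields on a hypersurface is tangent, whence $[\v_1,\v_2]=0$. A commuting orthonormal tangent frame forces the induced metric to be flat (zero Riemann tensor, hence zero Gauss curvature everywhere), and a closed $\mathscr C^{2}$ surface in $\R^3$ cannot have identically vanishing Gauss curvature (at a point of maximal distance from the origin the curvature is strictly positive). This is the missing idea; it is precisely the step borrowed in spirit from \cite{Gerner_2023}, so your instinct about where the input comes from is right, but the concrete mechanism---commuting frame $\Rightarrow$ flat $\Rightarrow$ impossible in $\R^3$---is what you need to supply.
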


\begin{remark}
The eigenvalues of the Stokes operator are generically simple with respect to the domain; we refer to Ortega \& Zuazua and Chitour, Kateb \& Long \cite{zbMATH06524158,zbMATH01700806}. Regarding the multiplicity of eigenvalues of the Stokes operator let us also mention the recent work of Falocchi \& Gazzola \cite{Falocchi_2021,GazzolaFalocchi2}; it deals with Navier boundary conditions rather than with Dirichlet ones. 
Nevertheless, for an optimal domain (for an eigenvalue other than the first one), multiplicity is very often expected 
\cite[chapter 11]{bookHedG}.  \end{remark}
 \begin{remark}
 It is actually easy to prove that, at an optimiser, $\lambda_1(\O^*)$ has multiplicity at most 2. This will follow from considering the semi-differential of the eigenvalue. However, in order to lower the possible multiplicity to 1, we draw inspiration from the work of  Gerner \cite[Proof of Theorem 2]{Gerner_2023}.
 \end{remark}

Although it is highly unlikely that these necessary optimality conditions will provide a full characterisation of the optimiser, the Poincar\'e-Hopf theorem (also known as the hairy ball theorem, see \cite[Theorem 34.1]{zbMATH03320209} and  \cite[Theorem 39.7]{zbMATH01440270}) easily implies the following corollary:
\begin{corollary}\label{Co:HairyBall}
Assume $\O^*$ has a $\mathscr C^{2,\alpha}$ boundary with $\alpha\in (0,1)$. If $\Sigma$ is a connected component of $\partial \O^*$, $\Sigma$ has Euler characteristic 0, and is thus homeomorphic to a torus.\end{corollary}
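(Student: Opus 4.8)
The plan is to combine the scalar conclusion of Theorem \ref{Th:Necessary} — namely that $\lambda_1(\O^*)$ is simple with eigenfunction $\u$ satisfying $\Vert(\n\u)\nu\Vert$ constant on $\partial\O^*$ — with the Poincaré–Hopf theorem applied to a well-chosen tangent vector field on each connected component $\Sigma$ of $\partial\O^*$. First I would observe that since $\u\in W^{3,\infty}_{loc}$-type regularity propagates up to the boundary (elliptic regularity for the Stokes system with a $\mathscr C^{2,\alpha}$ boundary), the trace of $\u$ on $\partial\O^*$ is well defined and continuous; but $\u=0$ on $\partial\O^*$, so $\u$ itself gives nothing. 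The relevant object is instead $(\n\u)\nu$, the normal derivative of $\u$, which by Theorem \ref{Th:Necessary} has constant nonzero norm $c$ on $\partial\O^*$ (it is nonzero because if it vanished somewhere it would vanish everywhere, and then Cauchy–Kovalevskaya / unique continuation for the Stokes system would force $\u\equiv 0$, contradicting $\u\neq0$).

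The key step is to show that $(\n\u)\nu$ is tangent to $\partial\O^*$ at every point, so that it defines a nowhere-vanishing continuous tangent vector field $\V$ on each component $\Sigma$. Tangency follows from differentiating the boundary condition: since $\u$ vanishes on $\partial\O^*$, its tangential derivatives vanish there, so $\n\u$ restricted to the boundary has the form $\n\u = (\partial_\nu\u)\otimes\nu$ on $\partial\O^*$; taking the divergence-free condition $\n\cdot\u=0$ (which holds up to the boundary by continuity) gives $0=\mathrm{tr}(\n\u)=\langle(\partial_\nu\u),\nu\rangle$ on $\partial\O^*$, i.e. the normal derivative is orthogonal to $\nu$, hence tangent to $\Sigma$. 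So $\V:=(\n\u)\nu$ is a continuous (indeed $\mathscr C^{1,\alpha}$) nowhere-vanishing tangent vector field on the compact surface $\Sigma\subset\R^3$. By the Poincaré–Hopf theorem \cite[Theorem 34.1]{zbMATH03320209}, \cite[Theorem 39.7]{zbMATH01440270}, the existence of such a field forces the Euler characteristic $\chi(\Sigma)=0$. Since $\Sigma$ is a closed orientable surface (it bounds $\O^*$), the classification of surfaces then gives that $\Sigma$ is homeomorphic to the torus $\mathbb T^2$.

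The main obstacle I anticipate is purely a regularity/boundedness issue rather than a topological one: one must ensure that $\partial\O^*$ is compact and that $\V$ is genuinely continuous and nonvanishing up to the boundary. Compactness of $\partial\O^*$ is not automatic because, as the authors note after Theorem \ref{Th:Existence}, the concentration-compactness argument does not guarantee $\O^*$ is bounded; however, the corollary explicitly hypothesises $\mathscr C^{2,\alpha}$ regularity of the boundary, and under that hypothesis each connected component $\Sigma$ of $\partial\O^*$ is a compact embedded $\mathscr C^{2,\alpha}$ surface (a closed manifold), which is all Poincaré–Hopf requires — so this is handled by assumption. The other point requiring a little care is the nonvanishing of $\V$: this is exactly where Theorem \ref{Th:Necessary}'s statement that $\Vert(\n\u)\nu\Vert$ is \emph{constant} is essential, combined with the fact that this constant cannot be $0$; I would spell out the unique-continuation argument for why $(\n\u)\nu\equiv 0$ on some component is impossible (e.g. via Holmgren's theorem applied to the overdetermined Stokes system $-\Delta\u+\n p=\lambda_1\u$, $\n\cdot\u=0$, $\u|_\Sigma=0$, $(\partial_\nu\u)|_\Sigma=0$, together with connectedness of $\O^*$).
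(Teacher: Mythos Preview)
Your proposal is correct and follows essentially the same route as the paper: Theorem~\ref{Th:Necessary} gives that $\Vert(\n\u)\nu\Vert$ is constant, Remark~\ref{Re:Tangential} (which you rederive) gives that $(\n\u)\nu$ is tangential, and Poincar\'e--Hopf finishes. The only notable difference is that you invoke unique continuation to show the constant is nonzero, whereas the proof of Theorem~\ref{Th:Necessary} actually yields the explicit value $\Vert(\n\u)\nu\Vert^2=\tfrac{2}{3}\lambda_1(\O^*)/|\O^*|>0$ for a normalised eigenfunction, which settles nonvanishing more directly.
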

Indeed, it suffices to observe that, as for any eigenfunction $\u$ in $\O$, as $\u=0$ in $\partial\O$ and $\n\cdot\u=0$ in $\O$, the vector field $(\n \u)\nu$ is tangential to $\partial \O$ (see Remark \ref{Re:Tangential}). Thus, Theorem \ref{Th:Necessary} yields the applicability of the hairy ball theorem. It is notable that this line of reasoning was previously used in \cite{enciso2022optimal,zbMATH07697301}. These results are in good agreement with the numerical simulations of \cite{li2023shape}, which sets out to study numerical approximations of spectral optimisation problems for the Stokes operator.

\begin{remark}\label{Re:Tangential}[A consequence of choosing Dirichlet conditions]
An elementary yet important observation that will be used several times in the paper is the following fact: for any $\mathscr C^2$ domain $\O$, if $\u_\O$ is a first eigenfunction of the Dirichlet-Stokes operator, then the incompressibility condition and the boundary conditions imply that 
\[ \langle (\n \u_\O)\nu,\nu\rangle=0\quad \text{ on }\partial \O.\]
Indeed, as $\u_\O=0$ on $\partial \O$, we have, for any $1\leq i\leq d$, 
\[ \n u_i=\frac{\partial u_i}{\partial \nu}\nu,\] whence, for any $1\leq i,j\leq d$, 
\[ \frac{\partial u_i}{\partial x_j}=\frac{\partial u_i}{\partial \nu}{\nu_j}.\] 
Consequently, one has 
\begin{eqnarray*}
 \langle (\n \u_\O)\nu,\nu\rangle&=&\sum_{i=1}^d \nu_i\left(\sum_{j=1}^d \frac{\partial u_i}{\partial x_j}\nu_j\right)
 =\sum_{i=1}^d \frac{\partial u_i}{\partial \nu}\nu_i\left(\sum_{j=1}^d \nu_j^2\right)\\
 &=& \sum_{i=1}^d \frac{\partial u_i}{\partial \nu}\nu_i=\sum_{i=1}^d \frac{\partial u_i}{\partial x_i}= \n \cdot \u =0
\end{eqnarray*}
a.e. on $\partial\O$.

\end{remark}

\subsection{Bibliographical references}\label{Se:Biblio}
The problem under consideration in this article fits in several active lines of research. Let us describe some of the main ones.

\paragraph{Spectral optimisation problems for scalar operators.}
Due to the wealth of information they provide on the interplay between the geometry and the analytic properties (\emph{i.e.} regarding functions defined on the domain)  of domains, spectral optimisation problems have become a tenet of applied mathematics. The basic question in spectral optimisation is the following: given a certain spectral quantity, defined through a differential operator on a domain, what is the domain minimising or maximising this spectral quantity? 
It would be pointless to try and give an exhaustive bibliographical account of the developments of the field, but let us highlight some of the key contributions. The seminal works of Faber \cite{faber1923beweis} and Krahn \cite{Krahn_1925}, investigating the minimisation of the first eigenvalue of the (scalar) Dirichlet-Laplacian bolstered the development of new approaches blending geometric and analytic tools. Typical questions include the existence of optimisers, their geometry and their stability. Regarding the existence of optimisers, when no direct comparison principles can yield the explicit description of an optimiser, the first general theorem is due to Buttazzo \& Dal Maso \cite{Buttazzo_1993}, when additional box constraints are enforced on the set of admissible domains. Bucur \cite{zbMATH01454740} developed a framework designed to handle the unbounded case. This was done using the concentration compactness-principle of Lions \cite{LionsCC}, and later used by Bucur \& Varchon \cite{BucurVarchon} and Bucur \& Freitas \cite{BucurFreitas} to derive existence results for the optimisation of eigenvalues of scalar operators. Let us also mention some more general results due, independently, to Bucur and Mazzoleni \& Pratelli \cite{Buc12,MazPra11}. 
We refer to \cite[chapter 2]{bookHedG} as a reference for existence problems. Regarding the geometry of optimisers, these are usually very delicate questions. While symmetrisation techniques-which can be used to derive direct comparison results-can be available for specific problems, there are no general tools that can be employed for generic spectral optimisation problems. Regarding rearrangements, we refer to the monograph \cite{zbMATH05042914}. Another possibility to obtain information regarding the optimal domains is to derive tractable optimality conditions; as the latter are often expressed in terms of an overdetermined boundary value problem having a solution on the optimal domain, results similar to the Serrin theorem  \cite{zbMATH03351993} (see also \cite{zbMATH06880881}) yield the result. For an introduction to the geometric aspects of spectral optimisation we refer to \cite{Henrot_2018}.

\paragraph{Optimality conditions for multiple eigenvalues.}
A tenet of the Stokes problem in dimension 3 is that the first eigenvalue of the Dirichlet-Stokes operator has multiplicity 3 at the ball. While seemingly innocuous this remark actually implies the non-minimality of the ball in $\R^3$. Naturally, when deriving optimality conditions for multiple eigenvalues, one needs to be very careful in handling shape derivatives, as they do not exist. The approach used in this article is based on the apparatus that was set up by Cox \cite{zbMATH00850600}, Chenais \& Rousselet \cite{Rousselet_1990}  and Chatelain \& Choulli \cite{zbMATH01167545} to handle these difficulties. The relevant formulae will be recalled in the course of this article. In a recent paper, Caubet, Dambrine \& Mahadevan \cite{Caubet_2021} developed this semi-differential approach to accomodate the linear elasticity system.

\paragraph{Shape optimisation for vectorial operators.}
The literature devoted to shape optimisation problems for vectorial operators is scarce, but growing. Of particular importance to us in this regard are the different contributions to the study of isoperimetric inequalities for the $\curl$ operator {in $\R^3$}, that is, the optimisation of the first positive eigenvalue of 
\[\begin{cases}
\curl(\u)=\xi \u & \text{ in }\O\,,\\ \langle \u,\nu\rangle=0 & \text{ on }\partial \O.
\end{cases}
\] This problem is of paramount importance in the study of magnetic fields, and has a strong connection \cite{Chandrasekhar_1957} to the Stokes eigenvalue problem with a tangential flow condition, rather than a Dirichlet condition. To the best of our knowledge, the main works on this $\curl$-isoperimetric problems can be found, on the one hand, in the works of Cantarella, DeTurck, Gluck  and Teytel \cite{Cantarella_2000_Bis,Cantarella_2000} and, on the other hand, in the research of Enciso, Gerner \& Peralta-Salas \cite{enciso2022optimal,zbMATH07697301,Gerner_2023}. Let us briefly review their finding: in \cite{Cantarella_2000_Bis,Cantarella_2000}, the 	authors, using explicit computations of eigenfunctions, observe the optimality of the ball among all concentric spherical shells with a given volume constraint, but also conjecture that the minimiser of the $\curl$-isoperimetric problem is a ``spheromak", that is, a sphere where the south and north poles are glued together. In their paper, they also give optimality conditions for the optimiser, which lead to the same type of conclusion as Corollary \ref{Co:HairyBall}. It is interesting to note that their analysis recovers parts of the semi-differentiability results of \cite{zbMATH01167545}. Before we describe the results of \cite{enciso2022optimal,zbMATH07697301,Gerner_2023} observe that Enciso, Gerner \& Peralta-Salas work with vector fields that are not only incompressible, but also orthogonal to harmonic (\emph{i.e.} $\curl$ and $\div$ free) fields. In \cite{enciso2022optimal,zbMATH07697301}, another take on the problem is introduced, and the authors investigate possible symmetries of optimisers. In \cite{zbMATH07697301}, Enciso \& Peralta-Salas investigate whether a solution of the $\curl$-isoperimetric problem can have axial symmetry; their conclusion is no, provided some regularity of the minimiser is assumed \emph{a priori}. Their results hinge on a variational analysis (\emph{\`a la } Hadamard). In \cite{enciso2022optimal} on the other hand, they seek optimisers in the class of convex domains, and they prove that one can not have ``too regular" optimisers.  Gerner, in \cite{Gerner_2023}, went much further in the fine characterisation of possible optimisers, both in the euclidean and riemannian settings . Although the adaptation of some of his results is not immediate, one of the necessary optimality conditions we derive is inspired by \cite[Theorem 2]{Gerner_2023}. Finally, let us mention the recent work of Lamberti \& Zaccaron \cite{zbMATH07394264}, in which, also using a semi-differential approach, the authors investigate the optimal shape of an electromagnetic cavity.

\paragraph{The buckling problem.}
Let us conclude this bibliographical paragraph by mentioning a closely related problem in the two-dimensional case, the buckling problem. Indeed, in dimension 2, assume $\O$ is simply connected, so that we might write the first Dirichlet-Stokes eigenfunction $\u$ as 
\[ \u=\begin{pmatrix}-\partial_y \psi\\ \partial_x \psi\end{pmatrix}\] for some function $\psi$. { 
Plugging this expression into \eqref{Eq:Eigenequation} we deduce that 
\[\begin{cases}
-\partial_{xxy}\psi-\partial_{yyy}\psi-\partial_xp=\lambda_1(\O)\partial_y\psi& \text{ in }\O\,, \\
-\partial_{xxx}\psi-\partial_{yyx}\psi+\partial_yp=\lambda_1(\O)\partial_x\psi&\text{ in }\O.
\end{cases}
\] Differentiating the first equation with respect to $y$, the second with respect to $x$ and summing the two, we deduce that $-\Delta^2\psi=\lambda_1(\O)\Delta\psi$. Furthermore, as $\begin{pmatrix}-\partial_y\psi\\\partial_x\psi\end{pmatrix}=\begin{pmatrix}0\\0\end{pmatrix}$ on $\partial \O$ we deduce that $\psi$ is constant on $\partial \O$, and that $\partial_\nu\psi=0$. Up to adding a constant to $\psi$, we may take $\psi=0$ on $\partial \O$. Overall, $(\psi, \lambda_1(\O))$  solves the fourth order equation
\[ \begin{cases}
-\Delta^2\psi=\lambda_1(\O)\Delta \psi&\text{ in }\O\,, 
\\ \psi=\partial_\nu \psi=0&\text{ on }\partial \O.
\end{cases}\] 
}
In fact, with a bit more work, one can see that $\lambda_1(\O)$ coincides with the eigenvalue
\[ \Lambda(\O):=\min_{v\in W^{2,2}(\O)\cap W^{1,2}_0(\O)}\frac{\int_\O (\Delta v)^2}{\int_\O |\n v|^2}.\]
This is the well-known \emph{buckled plate eigenvalue problem}, leading to the spectral optimisation problem
\begin{equation}\inf_{\Omega \subset \R^2\,, |\O|\leq c}\Lambda(\O).\end{equation} We refer to \cite[Chapter 11]{Henrot_2006} for more details, but let us underline the following aspects of the problem:
first of all, the existence of an optimal domain remained open until the contribution of Ashbaugh \& Bucur \cite{Ashbaugh_2003} and, later, of Stollenwerk \cite{zbMATH07633006,zbMATH06560656}. Second, this problem is related to a long-standing conjecture by P\'oly\`a \& Szeg\"{o} that asserts that the ball is a solution of this variational problem. 
A very active line of research has focused on the computation of shape-derivatives for this problem and, more generally, for polyharmonic problems. Regarding Hadamard type shape derivatives, we refer to \cite{MR2720607}. For applications of this calculus to polyharmonic problems, let us point to the numerous works of Buoso \& Lamberti \cite{MR3571822,MR3182687,MR3153100}. To the best of our knowledge, the second-order derivative of the first eigenvalue at the ball was not known.

\section{Proof of Theorem \ref{Th:Existence}}

We pick a minimising sequence $\{\O_k\}_{k\in \N}$ for \eqref{Eq:Pv}. By scaling invariance of $\mathcal F$ we might assume that $|\O_k|$ does not depend on $k$: there exists $V_0>0$ such that $|\O_k|=V_0$ for any $k\in \N$. We consider the auxiliary problem 
\begin{equation}\label{Eq:PvAuxiliary}
\inf_{\O\in \mathscr O\,, |\O|\leq V_0}\lambda_1(\O).
\end{equation}
Let us consider, for any $k\in \N$, the function $w_k$ defined as 
\[ w_k:=\Vert \u_k\Vert\] where for any $k$ the function $\u_k$ is an\footnote{We do not rule out the possibility of $\O_k$ having a multiple eigenvalue. Although one could easily discard this case by invoking the fact that the first eigenvalue is generically simple, we do not require this.} eigenfunction of $\O_k$. The function $w_k$ is extended by 0 outside of $\O_k$.

By definition and elementary computations, we have 
\begin{equation}\label{Eq:Prprtywk} \int_{\R^d}w_k^2=\int_\O\Vert \u_k\Vert^2\color{black}=1,\quad  \int_{\R^d}|\n w_k|^2\leq \int_{\R^d}\Vert \n \u_k\Vert^2.\end{equation}

 By the concentration-compactness principle of Lions \cite{LionsCC} we know that, up to extracting a subsequence which we still denote $\{w_k\}_{k\in \N}$ with a slight abuse of notations, \color{black} one of the following occurs:
\begin{itemize}
\item[(i)] \textbf{Concentration}: there exists a function $w\in L^2(\R^d;\R_+)$ and a sequence $\{y_k\}_{k\in \N}\in (\R^d)^\N$ such that 
$w_k(\cdot+y_k)\xrightarrow[k\to \infty]{} w$ in $L^2(\R^d)$ and weakly in $W^{1,2}(\R^d)$. 
\item[(ii)]\label{Dichotomy} \textbf{Dichotomy}:  
There exists $\alpha_1\in (0;1)$, $\{y_k\}_{k\in \N}\in (\R^d)^\N$, two sequences $\{R_k,R_k'\}_{k\in \N}$ such that 
\[ R_k-R_k'\xrightarrow[k\to +\infty]{} +\infty,\quad  R_k\,, R_k'\xrightarrow[k\to +\infty]{} +\infty\]
and such that 
\[ 
\int_{\B(y_k,R_k)}w_k^2\xrightarrow[k\to +\infty]{} \alpha_1,\quad  \int_{\B(y_k,R_k')^c}w_k^2\xrightarrow[k\to +\infty]{} 1-\alpha_1
\]
and
\[ \underset{k\to\infty}{\lim\inf}\left(\int_{\R^d}|\n w_k|^2- \int_{\B(y_k,R_k)}|\n w_k|^2- \int_{\B(y_k,R_k')^c}| \n w_k|^2\right)\geq 0.\]
\item[(iii)] \textbf{Vanishing}: for any $r>0$, 
\[ \lim_{k\to \infty}\sup_{y\in \R^d}\int_{\B(y,r)}w_k^2=0.\]
\end{itemize}
In this first step of the proof, we rule out vanishing and dichotomy.
\paragraph{Vanishing does not occur.} Let us exclude vanishing. By  the exact same arguments as in \cite[Lemma 3.3, Proof of Theorem 3.2]{BucurVarchon} and \eqref{Eq:Prprtywk}, if the sequence $\{w_k\}_{k\in \N}$ vanishes, we have\[ \underset{k\to +\infty}{\lim\sup}\, \lambda_1(\Omega_k)=+\infty.\]  This contradicts the fact that the sequence is minimising.
 \paragraph{Dichotomy does not occur.}
 Argue by contradiction and assume dichotomy holds. In that case, define 
 \[ \eta_k:=\frac{R_k'-R_k}4.\] Introduce $\{\psi_{k,1}\,, \psi_{k,2}\}_{k\in \N}$ as two non-negative, radially (with respect to $y_k$) symmetric,  
 non-increasing functions such that 
 \[ \psi_{k,1}(x)=\begin{cases}1\text{ in }\B(y_k,R_k+\eta_k\color{black})\,, 
 \\0 \text{ in }\B(y_k,R_k+2\eta_k)^c,\end{cases}\and  \psi_{k,2}(x)=\begin{cases} 1\text{ in }\B(y_k,R_k'-\eta_k)^c\,, \\ 0\text{ in }\B(y_k,R_k'-2\eta_k)\end{cases}\]and 
 \[ 
 \Vert \n \psi_{k,i}\Vert_{L^\infty}\leq \frac1{\eta_k},\quad  k\in \N,\quad  i\in \{1,2\}.
 \]
 Now, let, for any $k\in \N$ and any $i\in \{1,2\}$
 \[ \v_{k,i}:=\psi_{k,i}\u_k.\] Then,  taking into account that $\Vert \n\psi_{k,1}\Vert_{L^\infty(\R^d)}\,,  \Vert \n\psi_{k,1}\Vert_{L^\infty(\R^d)}\leq 1/\eta_k$, we can proceed exactly as in \cite[Lemma III.1]{LionsCC} to obtain \color{black}
 \[ 
 \int_{\R^d}\Vert \v_{k,1}\Vert^2\underset{k\to\infty}\rightarrow \alpha_1,\quad  \int_{\R^d}\Vert \v_{k,2}\Vert^2\underset{k\to\infty}\rightarrow 1-\alpha_1
 \] 
 and 
 \[ 
 \underset{k\to\infty}{\lim\inf}\left(\int_{\R^d}\Vert \n\u_k\Vert^2-\int_{\R^d}\Vert \n\v_{k,1}\Vert^2-\int_{\R^d}\Vert \n\v_{k,2}\Vert^2\right)\geq 0.
 \]
 Up to iterating that construction, we can assume that $\{\Vert \v_{k,1}\Vert\}_{k\in \N}$ is in a situation of concentration. In particular, there exists $\v_1$ such that, up to a translation by a vector $z_k\in \R^d$ (which we take equal to 0 without loss of generality), 
 \[ \v_{k,1}\xrightarrow[k\to +\infty]{} \v_1\text{ strongly in $L^2(\R^d)$, weakly in $W^{1,2}(\R^d)^3$}.\]
Let us now observe that 
\[ \nabla\cdot\v_1=0\] in the sense of distributions.
To prove this fact, let $\phi\in \mathscr C^\infty_c(\R^d)$ and consider the quantity
\[ \int_{\R^d} \langle \v_{k,1},\n \phi\rangle.\]
As $\phi$ is compactly supported, $\mathrm{supp}(\phi)\cap\{\n\cdot\v_{k,1}\neq 0\}=\emptyset$ for $k$ large enough. Indeed, should this not be the case, as $\n \cdot \v_{k,1}=\n\cdot\u_k=0$   in $\B(y_k,R_k+\eta_k)$, we conclude that there exists $x\in \mathrm{supp}(\phi)\cap\B(y_k,R_k+\eta_k)^c$. Letting $\mathrm{dist}(E,F)$ be the Hausdorff distance between two closed subset $E\,, F\subset \R^d$, this implies
\[ \mathrm{dist}\left(\mathrm{supp}(\phi),\B(y_k,R_k)\right)\geq \eta_k\xrightarrow[k\to +\infty]{} \infty.\] In particular, we deduce that
\[\v_{k,1}\underset{k\to\infty}\rightarrow 0\text{ in }L^2_{\mathrm{loc}}(\R^d),
\]which contradicts the strong $L^2$ convergence of $\{\v_{k,1}\}_{k\in \N}$, thereby leading to a contradiction. Thus, 
\[ \int_{\R^d}\langle \v_{k,1},\n\phi\rangle=0\] for any $k$ large enough. Passing to the limit provides the result.
\color{black}

Now, we know that 
\[\lambda^*:=\inf_{\O\in \mathscr O\,, |\O|\leq V_0}\lambda_1(\O)\geq \min_{i=1,2}\underset{k\to \infty}{\lim\inf}\left\{\frac{\int_{\R^d}\Vert \n\v_{k,i}\Vert^2}{\int_{\R^d}\Vert \v_{k,i}\Vert^2}\right\}.\] If this minimum is reached for $i=1$, then $\O^*:=\{\v_1\neq 0\}$ is an optimal domain. Else, we apply the procedure once more (as in \cite{BucurVarchon}) to $\{\v_{k,2}\}_{k\in \N}$. Either this sequence is in a situation of compactness, in which case we are done, or it is once again in a situation of dichotomy, giving rise to two new sequences $\{\v_{k,2}^1\,, \v_{k,2}^2\}_{k\in \N}$, with $\{\v_{k,2}^1\}_{k\in \N}$ in a situation of compactness. We then iterate the construction on $\v_{k,2}^2$ if necessary. Either this process stops, in which case existence follows, or we obtain  a decreasing sequence $\{\alpha_j\}_{j\in \N\,, j\geq 2}$ such that 
\[ \sum_{j=1}^{+\infty} \alpha_j\leq 1\] and we have a sequence $\{\v_{k,2}^j\}_{k\in \N\,, j\leq J_k }$ where $J_k\xrightarrow[k\to +\infty]{} \infty$ such that for any $j$,  $\{\v_{k,2}^j\}_{k\in \N}$ is in a situation of concentration, and $\{\{\v_{k,2}^{J_k}\neq 0\}\}_{k\in \N}$  is a minimising sequence. 

Observe that if $\sum_{j=1}^\infty\alpha_j<1$, then vanishing holds for $\{\v_{k,2}\}_{k\in \N}$, which leads to a contradiction, so that we can work under the assumption that 
\[ \sum_{j=1}^\infty \alpha_j=1.\] Consequently, we deduce that

\[ \lambda^*\geq \underset{k\to \infty}{\lim\inf}\frac{\int_{\R^d}\Vert \n \v_{k,1}\Vert^2+\sum_{j=1}^{J_k}\int_{\R^d}\Vert \n \v_{k,2}^j\Vert^2}{\int_{\R^d}\Vert \v_{k,1}\Vert^2+\sum_{j=1}^{J_k}\int_{\R^d}\Vert  \v_{k,2}^j\Vert^2}.\]
Let 
\[ \alpha_{k,1}:=\int_{\R^d}\Vert  \v_{k,1}^j\Vert^2\text{ which satisfies }\alpha_{k,1}\underset{k\to\infty}\rightarrow \alpha_1>0\] and 
\[ \tilde \alpha_{k}:=\sum_{j=1}^{J_k}\int_{\R^d}\Vert  \v_{k,2}^j\Vert^2\text{ which satisfies }\tilde \alpha_{k,1}\underset{k\to\infty}\rightarrow 1-\alpha_1.\] 
Thus
\[\lambda^*\geq \underset{k\to \infty}{\lim\inf}\frac{\int_{\R^d}\Vert \n \v_{k,1}\Vert^2}{\alpha_{k,1}+\tilde\alpha_k}+\underset{k\to \infty}{\lim\inf}\frac{\sum_{j=1}^{J_k}\int_{\R^d}\Vert\n  \v_{k,2}^j\Vert^2}{\alpha_{k,1}+\tilde\alpha_k}.\]
Now, since $\alpha_{k,1}+\tilde\alpha_k\underset{k\to\infty}\rightarrow 1\,, \alpha_{k,1}\underset{k\to\infty}\rightarrow \alpha_1$, the fact that we have by construction 
\[ \underset{k\to \infty}{\lim\inf}\frac{\int_{\R^d}\Vert \n \v_{k,1}\Vert^2}{\alpha_1}>\lambda^*\]implies
\[ \underset{k\to \infty}{\lim\inf}\frac{\int_{\R^d}\Vert \n \v_{k,1}\Vert^2}{\alpha_{k,1}+\tilde\alpha_k}>\alpha_1\lambda^*.\]
Likewise, 
\[\underset{k\to \infty}{\lim\inf}\frac{\sum_{j=1}^{J_k}\int_{\R^d}\Vert  \n\v_{k,2}^j\Vert^2}{\alpha_{k,1}+\tilde\alpha_k}\geq (1-\alpha_1)\lambda^*.\] Consequently
\[ \lambda^*\geq \underset{k\to \infty}{\lim\inf}\frac{\int_{\R^d}\Vert \n \v_{k,1}\Vert^2+\sum_{j=1}^{J_k}\int_{\R^d}\Vert \n \v_{k,2}^j\Vert^2}{\int_{\R^d}\Vert \v_{k,1}\Vert^2+\sum_{j=1}^{J_k}\int_{\R^d}\Vert  \v_{k,2}^j\Vert^2}>\lambda^*,\] a contradiction. The result follows.

We deduce that we have concentration of the sequence, so that 
\[ \v_k\underset{k\to\infty}\rightarrow \v\text{ in $L^2(\R^d)$}.\]
Define 
\[ \O^*:=\{\v\neq 0\}.\] 
$\O^*$ is a quasi-open set, and $|\O^*|\leq V_0$, as, up to a subsequence, $\mathds 1_{\O^*}\leq \underset{k\to\infty}{\lim\inf} \, \mathds 1_{\{\v_k\neq 0\}}$. Furthermore, 
\[ \lambda_1(\O^*)\leq \frac{\int_{\O^*}\Vert \n \v\Vert^2}{\int_{\R^d}\Vert \v\Vert^2}\leq \underset{k\to\infty}{\lim\inf}\frac{\int_{\O^*}\Vert \n \v_k\Vert^2}{\int_{\R^d}\Vert \v_k\Vert^2}=\lambda^*,\] where we used the weak $W^{1,2}(\R^d;\R^d)$ convergence of the sequence $\{\v_k\}_{k\in \N}$.

It follows that $\Omega^*$ solves ~\eqref{Eq:PvAuxiliary}.

\color{black}

\section{Proof of Theorem~\ref{Th:Optimality2d} }
Throughout our analysis, we will be using some basic facts about the Stokes eigenfunctions in $\B_2$, which we thus first recall.
\subsection{Preliminaries}
\paragraph{The first eigenpair in $\B_2$.}
We let $E_1(\B_2)$ be the first eigenspace associated with $\lambda_1(\B_2)$. We begin with the following fact (see Remark \ref{Re:Buckling}):
\begin{lemma}\label{Le:Multiplicity2d}
The eigenvalue $\lambda_1(\B_2)$ is simple: $\mathrm{dim}(E_1(\B_2))=1$. Furthermore, 
\[ \lambda_1(\B_2)=j^2_{1,1},
\] where $j_{1,1}$ is the first positive root of $J_1$, the Bessel function of the first kind of order 1.
 Finally,
$E_1(\B_2)$ is spanned by the eigenfunction 
{\[ \u_{\B_2}:=
\frac{J_1(j_{1,1} r) }{ \sqrt{\pi}   \vert J_0(j_{1,1})\vert }  \begin{pmatrix} -\sin\theta \\ \cos\theta \end{pmatrix} ,
\]}
{normalised in $L^2(\B_2)$}, and the {associated} pressure is constant.
\end{lemma}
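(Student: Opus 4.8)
\textbf{Proof plan for Lemma \ref{Le:Multiplicity2d}.}

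The plan is to reduce the analysis of the Stokes eigenvalue problem in $\B_2$ to a scalar fourth-order (buckling) problem via the stream function, and then to solve that scalar problem explicitly using separation of variables in polar coordinates. First I would recall that since $\B_2$ is simply connected, any divergence-free $\u\in W^{1,2}_0(\B_2;\R^2)$ admits a stream function $\psi$ with $\u=(-\partial_y\psi,\partial_x\psi)^\top$, and the boundary conditions $\u=0$ on $\partial\B_2$ translate into $\psi=\partial_\nu\psi=0$ on $\partial\B_2$ (after normalising the additive constant). As explained in the introduction (the buckling paragraph), $\lambda_1(\B_2)$ then coincides with the first eigenvalue $\Lambda(\B_2)$ of the buckled plate problem $-\Delta^2\psi=\Lambda\Delta\psi$ in $\B_2$, $\psi=\partial_\nu\psi=0$ on $\partial\B_2$, and eigenspaces correspond bijectively. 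So it suffices to show $\Lambda(\B_2)$ is simple with eigenvalue $j_{1,1}^2$ and to identify the eigenfunction.

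Next I would carry out the separation of variables. Writing $-\Delta\psi=\phi$, the equation becomes $-\Delta\phi=\Lambda\phi$, i.e. $\phi$ solves a Helmholtz equation; expanding in Fourier modes $\phi=\sum_n \phi_n(r)e^{in\theta}$, the bounded-at-origin solutions are $\phi_n(r)=c_n J_n(\sqrt\Lambda\, r)$. One then solves $-\Delta\psi=\phi$ mode by mode: the general solution is a particular solution (a multiple of $J_n(\sqrt\Lambda\, r)e^{in\theta}$, using $-\Delta(J_n(\sqrt\Lambda r)e^{in\theta})=\Lambda J_n(\sqrt\Lambda r)e^{in\theta}$) plus a harmonic function bounded at $0$, i.e. $\propto r^{|n|}e^{in\theta}$. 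Imposing the two boundary conditions $\psi_n(1)=\psi_n'(1)=0$ gives, for each $n$, a $2\times 2$ linear system whose determinant must vanish; this yields the characteristic equation for the $n$-th branch of eigenvalues. A direct computation (using $J_n'(x)=\tfrac{n}{x}J_n(x)-J_{n+1}(x)$ and the value of the harmonic part and its derivative at $r=1$) shows the smallest root over all $n$ is achieved at $|n|=1$ and equals $j_{1,1}$, the first positive zero of $J_1$: indeed for $n=1$ the harmonic contribution is $\propto r$, and one checks the characteristic equation degenerates so that $\Lambda=j_{1,1}^2$ with the particular solution alone (coefficient of $r$ vanishing) satisfies both conditions, giving $\psi_1(r)=J_1(j_{1,1}r)$ up to scaling. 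Monotonicity/interlacing properties of Bessel zeros ($j_{n,1}$ increasing in $n$, and $j_{0,1}>j_{1,1}$) then rule out any smaller eigenvalue from other modes.

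Then I would translate back: the mode $n=1$ (with both $e^{i\theta}$ and $e^{-i\theta}$, i.e. $\cos\theta$ and $\sin\theta$) a priori gives a two-dimensional space of stream functions, but applying $\u=\nabla^\perp\psi$ to $\psi=J_1(j_{1,1}r)\cos\theta$ and to $\psi=J_1(j_{1,1}r)\sin\theta$ produces velocity fields that are rotations of one another — more precisely the rotational symmetry of $\B_2$ forces them to span the \emph{same} one-dimensional eigenspace once one accounts for the correct identification, or alternatively one verifies directly that the only stream functions yielding the eigenfield are scalar multiples of a single one, so that $\dim E_1(\B_2)=1$. Concretely I expect $\u_{\B_2}$ to come out proportional to $J_1(j_{1,1}r)(-\sin\theta,\cos\theta)^\top$, which I would then normalise in $L^2(\B_2)$ using $\int_0^1 J_1(j_{1,1}r)^2\,r\,dr=\tfrac12 J_0(j_{1,1})^2$ (a standard Bessel identity, since $J_1(j_{1,1})=0$ and $J_1'=J_0-\tfrac1x J_1$), and the angular integral $\int_0^{2\pi}d\theta=2\pi$, yielding the normalising constant $\big(\sqrt\pi\,|J_0(j_{1,1})|\big)^{-1}$ as in the statement. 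Finally, the pressure: plugging $\u_{\B_2}$ into $-\Delta\u+\nabla p=\lambda_1\u$ and using that $\u_{\B_2}$ already solves $-\Delta\u=\lambda_1\u$ componentwise (equivalently, that the Helmholtz structure absorbs the Laplacian), one finds $\nabla p=0$, so $p$ is constant.

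The main obstacle is the multiplicity bookkeeping: the buckling problem in $\B_2$ naively suggests the $n=1$ eigenspace is two-dimensional (spanned by the two angular harmonics), and one must argue carefully — either via the $\nabla^\perp$ map collapsing it, or via a clean symmetry argument — that the Stokes eigenspace $E_1(\B_2)$ is genuinely one-dimensional. I would handle this by exhibiting the eigenfunction explicitly and checking that every element of the eigenspace is a scalar multiple of it, rather than relying on an abstract dimension count.
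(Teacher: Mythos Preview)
Your overall strategy --- reduce to the buckling problem via the stream function and separate variables --- is exactly the route the paper takes (it defers to \cite{Kelliher,LeeRummler} and sketches the reduction in Remark~\ref{Re:Buckling}). However, there is a genuine error in the execution that affects both the identification of the eigenvalue and the multiplicity argument.

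The first eigenvalue corresponds to the \emph{radially symmetric} ($n=0$) mode of the stream function, not to $|n|=1$. Carrying out your own recipe for $n=0$: the stream function reads $\psi_0(r)=-\tfrac{c_0}{\Lambda}J_0(\sqrt{\Lambda}\,r)+a_0$; the Neumann condition $\psi_0'(1)=0$ forces $J_0'(\sqrt{\Lambda})=0$, i.e.\ $J_1(\sqrt{\Lambda})=0$, whence $\Lambda=j_{1,1}^2$, and the Dirichlet condition then fixes $a_0$. More generally, the determinant for mode $n$ gives $J_{n+1}(\sqrt{\Lambda})=0$, so the $n$-th branch yields $\Lambda=j_{n+1,k}^2$ (this is exactly the indexing $\lambda_{j,k}=j_{|j|+1,k}^2$ in Lemma~\ref{Le:Eigenbasis2d}). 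Since $j_{1,1}<j_{2,1}<j_{3,1}<\dots$, the minimum is attained at $n=0$, and simplicity is immediate because this mode carries no angular degeneracy. Your specific claims for $n=1$ are incorrect: the candidate $\psi(r)=J_1(j_{1,1}r)$ satisfies $\psi(1)=0$ but $\psi'(1)=j_{1,1}J_1'(j_{1,1})=j_{1,1}J_0(j_{1,1})\neq 0$, so it is not a buckling eigenfunction; and the inequality $j_{0,1}>j_{1,1}$ you invoke is false ($j_{0,1}\approx 2.405<j_{1,1}\approx 3.832$).

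This misidentification is also the source of the ``multiplicity bookkeeping'' obstacle you flag at the end. The map $\psi\mapsto\nabla^\perp\psi$ is injective modulo constants, so two independent stream functions always give two independent velocity fields; there is no collapsing. The $|n|=1$ buckling eigenspace (at eigenvalue $j_{2,1}^2$) genuinely has multiplicity $2$ on the Stokes side, as stated in Lemma~\ref{Le:Eigenbasis2d}. With the correct $n=0$ identification, the stream function is $\psi(r)\propto J_0(j_{1,1}r)-J_0(j_{1,1})$, its perpendicular gradient is proportional to $J_1(j_{1,1}r)(-\sin\theta,\cos\theta)^\top$, and your normalisation and pressure computations then go through without change.
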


{\begin{remark}[A useful computation]\label{rk:usefulComp}We single out the following computations, which will prove useful when studying the local optimality of the ball.
Let us set $c=1/(\sqrt{\pi}|J_0(j_{1,1})|)$, $r=\sqrt{x^2+y^2}$ and $f(r)=J_1(j_{1,1}r)/r$. For $r>0$, one easily computes
\[
\nabla \u_{\B_2}=c\begin{pmatrix}
-\frac{xy}{r}f'(r) & -f(r)-\frac{y^2}{r}f'(r)\\
f(r)+\frac{x^2}{r}f'(r) & \frac{xy}{r}f'(r)
\end{pmatrix}
\]
so that
\[
\nabla \u_{\B_2}=cf'(1)\begin{pmatrix}
-\cos \theta\sin\theta & -\sin^2\theta\\
\cos^2(\theta) & \cos\theta\sin\theta
\end{pmatrix}\qquad \text{on }\partial\O.
\]
In particular, observe that
\[ (\n \u_{\B_2})\nu= cf'(1)\begin{pmatrix}-\sin(\theta)\\ \cos(\theta)\end{pmatrix}.\]
\end{remark}}

While we single out this result for later reference, the following spectral decomposition of the Dirichlet-Stokes operator in the ball is well-known:

\begin{lemma}\label{Le:Eigenbasis2d}
A Hilbert basis of eigenfunctions of the Dirichlet-Stokes operator is given, in polar coordinates, by 
{
\begin{equation}\label{Stokes_eig0}
\phi_{0,k}(r,\theta) = \frac{-J_0'(\sqrt{\lambda_{0,k}} r) }{ \sqrt{\pi} \vert J_0(\sqrt{\lambda_{0,k}})\vert }  \begin{pmatrix} -\sin\theta \\ \cos\theta \end{pmatrix} ,
\end{equation}}
and
{\begin{equation}\label{Stokes_eig1}
\begin{split}
\phi_{j,k,m}(r,\theta) \ =\ &
\frac{J_j(\sqrt{\lambda_{j,k}} r)-J_j(\sqrt{\lambda_{j,k}})r^j}{\sqrt{\lambda_{j,k}}\vert J_j(\sqrt{\lambda_{j,k}})\vert r} j (-1)^{m+1} Y_{j,m}(\theta) \begin{pmatrix} \cos\theta \\ \sin\theta \end{pmatrix} \\
& +
\frac{-\sqrt{\lambda_{j,k}}J_j'(\sqrt{\lambda_{j,k}} r) + j J_j(\sqrt{\lambda_{j,k}}) r^{j-1} }{ \sqrt{\lambda_{j,k}} \vert J_j(\sqrt{\lambda_{j,k}})\vert } Y_{j,m+1}(\theta) \begin{pmatrix} -\sin\theta \\ \cos\theta \end{pmatrix}
\end{split}
\end{equation}}
for $j\in\Z^*$, $k\in\N^*$ and $m=1,2$.

Here, $(r,\theta)$ are the usual polar coordinates (see \cite{Kelliher,LeeRummler}).
The functions $Y_{j,m}(\theta)$ are defined by $Y_{j,1}(\theta)=\frac{1}{\sqrt{\pi}}\cos(j\theta)$ and $Y_{j,2}(\theta)=\frac{1}{\sqrt{\pi}}\sin(j\theta)$, with the agreement that $Y_{j,3}=Y_{j,1}$, and $J_j$ is the Bessel function of the first kind of order $j$. Denoting by $j_{j,k}>0$ the $k^\textrm{th}$ positive zero of $J_{j}$, the eigenvalues of  the Dirichlet-Stokes operator are the doubly indexed sequence $(\lambda_{j,k})_{j\in\Z,k\in\N^*}$, where $\lambda_{j,k}=j_{\vert j\vert+1,k}^{2}$ is of multiplicity $1$ if $j=0$, and $2$ if $j\neq 0$.

\end{lemma}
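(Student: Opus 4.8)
The plan is to verify the claimed spectral decomposition by reducing the (vector) Stokes eigenvalue problem in $\B_2$ to scalar ODEs, using the stream function. Since $\B_2$ is simply connected, write $\u = (-\partial_y\psi,\partial_x\psi)^\top$, so that the eigenequation \eqref{Eq:Eigenequation} becomes the buckling equation $-\Delta^2\psi = \lambda \Delta\psi$ in $\B_2$ with $\psi = \partial_\nu\psi = 0$ on $\partial\B_2$, as already derived in the Introduction. First I would separate variables in polar coordinates, writing $\psi(r,\theta) = g_j(r)\,e^{ij\theta}$ (or with $\cos j\theta,\sin j\theta$), so that $w := \Delta\psi$ solves $-\Delta w = \lambda w$, forcing $w$ to be a Bessel mode $J_{|j|}(\sqrt\lambda\, r)e^{ij\theta}$ (the $Y$-Bessel solution being excluded by regularity at the origin). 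Then $\psi$ is obtained by solving $\Delta\psi = w$, whose particular solution is $\tfrac{-1}{\lambda}J_{|j|}(\sqrt\lambda\,r)e^{ij\theta}$ and whose homogeneous (harmonic) part, regular at $0$, is $c\, r^{|j|}e^{ij\theta}$; this explains the combination $J_j(\sqrt\lambda r) - J_j(\sqrt\lambda)\,r^j$ appearing in \eqref{Stokes_eig1}.

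Next I would impose the two boundary conditions. The condition $\psi = 0$ on $\partial\B_2$ is already built into the linear combination above (it vanishes at $r=1$ by construction of the $r^j$ coefficient). The remaining condition $\partial_r\psi = 0$ at $r=1$ then yields, for each $j$, the dispersion relation. A short computation using the Bessel identity $J_j'(x) = J_{j-1}(x) - \tfrac{j}{x}J_j(x)$ (together with $J_j(j_{|j|+1,k})$-type manipulations) should collapse this transcendental equation to $J_{|j|+1}(\sqrt\lambda) = 0$, i.e. $\lambda_{j,k} = j_{|j|+1,k}^2$; for $j=0$ one recovers $\lambda_{0,k}=j_{1,k}^2$ and the simpler eigenfunction \eqref{Stokes_eig0} with $-J_0'(\sqrt{\lambda_{0,k}}r) = J_1(\sqrt{\lambda_{0,k}}r)$. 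The multiplicity statement then follows from the angular dependence: $j=0$ gives a one-dimensional eigenspace (radial stream function, a single azimuthal velocity profile), while $j \neq 0$ gives a two-dimensional eigenspace spanned by the $\cos j\theta$ and $\sin j\theta$ modes, encoded by the index $m=1,2$ and the functions $Y_{j,m}$.

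Then I would pass back from $\psi$ to $\u = (-\partial_y\psi, \partial_x\psi)^\top = \tfrac{1}{r}(-\partial_\theta\psi)\,\er + (\partial_r\psi)\,\et$ in polar-vector form, differentiating the separated expression for $\psi$ and collecting the $\er$ and $\et$ components; converting $\er,\et$ into the Cartesian frame $(\cos\theta,\sin\theta)^\top$, $(-\sin\theta,\cos\theta)^\top$ produces exactly the two-term structure of \eqref{Stokes_eig1}. I would also check the $L^2(\B_2)$ normalisation constants by evaluating $\int_0^1 |g_j(r)|^2\,\cdots$ and $\int_0^1 |g_j'(r)|^2 r\,dr$ using standard Bessel integral identities (Lommel-type formulas), and verify $\int_0^{2\pi}|Y_{j,m}|^2 = 1$. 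Finally, completeness of the family $\{\phi_{0,k},\phi_{j,k,m}\}$ as a Hilbert basis of the divergence-free $L^2$ space follows from completeness of $\{e^{ij\theta}\}$ in $L^2(0,2\pi)$ together with completeness of the corresponding radial Bessel systems for each fixed $j$ (a classical Sturm-Liouville fact), which I would cite rather than reprove; alternatively one invokes the known spectral theory of the Stokes operator and simply matches eigenvalues and dimensions.

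The main obstacle I anticipate is purely computational bookkeeping: correctly carrying the $j$-dependent constants through the reduction so that the dispersion relation genuinely simplifies to $J_{|j|+1}(\sqrt\lambda) = 0$ (rather than some messier transcendental equation), and getting the normalisation constants in \eqref{Stokes_eig0}–\eqref{Stokes_eig1} exactly right. There is no conceptual difficulty — the stream-function reduction plus separation of variables is routine — but the Bessel-function manipulations and the conversion between the polar and Cartesian vector frames are where errors are easy to make, so I would double-check the boundary and normalisation computations against the simpler $j=0$ case and against Lemma~\ref{Le:Multiplicity2d}.
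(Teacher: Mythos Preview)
Your proposal is correct and follows essentially the same approach as the paper: the paper does not give a full proof but only a sketch in Remark~\ref{Re:Buckling}, reducing the Stokes eigenproblem to the buckling problem via the stream function and then indicating that separation of variables in polar coordinates yields the claimed Bessel expressions, deferring the detailed computations to \cite{Kelliher,LeeRummler}. Your outline is simply a more explicit version of that same route, and the computational caveats you flag (the dispersion relation reducing to $J_{|j|+1}(\sqrt\lambda)=0$ and the normalisation constants) are exactly the places where the paper itself opts to cite rather than compute.
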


\begin{remark}\label{Re:Buckling}[Regarding Lemmata \ref{Le:Multiplicity2d}-\ref{Le:Eigenbasis2d}]
Lemmata  \ref{Le:Multiplicity2d}-\ref{Le:Eigenbasis2d} are essentially contained in \cite{Kelliher,LeeRummler}, which provide explicit computations of the eigen-elements of the Dirichlet-Stokes operator in special geometries. While we refer to these articles for detailed computations, let us briefly indicate how they might be derived in a straightforward manner: consider $(\u_k,\lambda_k(\B_2))$ an eigenpair of the Dirichlet-Stokes operator. As $\nabla\cdot \u_k=0$, there exists a scalar function $\psi_k$ such that
\[ \u_k=\curl(\psi_k):=\begin{pmatrix}-\partial_y\psi_k\\\partial_x\psi_k\end{pmatrix}\] and, taking the (2 dimensional) $\curl$ of \eqref{Eq:Eigenequation}, it appears that $\psi_k$ solves
\begin{equation}\begin{cases}
-\Delta^2\psi_k=\lambda_k(\B_2)\Delta \psi_k&\text{ in }\B_2\,, 
\\ \psi_k=\partial_\nu\psi_k=0&\text{ on }\partial \B_2.\end{cases}\end{equation}
In other words, $\psi_k$ is an eigenvalue of the aforementioned buckling problem (see section \ref{Se:Biblio}). In other words, knowing the eigenspaces of the buckling problem leads to determining the eigenspaces of the Dirichlet-Stokes operator. However, the eigenspaces of the buckling operator can be easily computed in the usual radial coordinates. It should be observed that the three-dimensional case, although it follows a similar pattern (boiling it down to a scalar operator) is much more involved. We refer to Appendix \ref{Ap:Saks}.
\end{remark}

\paragraph{Basic results about the shape differentiability of eigenfunctions and eigenvalues.}
Since $\lambda_1(\B_2)$ is simple, the following differentiability result follows from the implicit function theorem of Mignot, Murat \& Puel \cite{zbMATH03656460}:
\begin{lemma}\label{Le:Differentiability2D}
Let \[ \mathscr X:=\{\Phi\in W^{3,\infty}(\R^2;\R^2)\,, \Phi \text{ compactly supported}\}.\] For any $\mathscr C^2$ domain $\O$ such that $\lambda_1(\O)$ is simple, the eigenvalue mapping $\O\mapsto \lambda_1(\O)$ is twice differentiable at $\Omega$ in the following sense: for any $\Phi\in \mathscr X$, the map $f_\Phi:t\mapsto \lambda_1\left((\mathrm{Id}+t\Phi)\O\right)$ is twice differentiable at $t=0$. We will use the notations 
\[   \langle d\lambda_1(\O),\Phi\rangle :=f_\Phi'(0)\,,    \langle d^2\lambda_1(\O)\Phi,\Phi\rangle :=f_\Phi''(0).\]
Similarly, the mapping $\O\mapsto \u_\O$ is twice differentiable  at $\Omega$, where $\u_\O$ is the first Dirichlet-Stokes normalized eigenfunction of $\O$, in the sense that the mapping $g_\Phi:t\mapsto \u_{(\mathrm{Id}+t\Phi)\Omega}$ is twice differentiable at $t=0$. We let $\u_\Phi'$ be its derivative at $t=0$.
\end{lemma}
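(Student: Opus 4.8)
The plan is to establish the differentiability of $\lambda_1$ and of the eigenfunction mapping simultaneously, via the Mignot--Murat--Puel implicit function theorem, by transporting the eigenvalue problem back onto the fixed domain $\O$. First I would fix $\Phi\in\mathscr X$ and, for $t$ small, write $\O_{t\Phi}=(\mathrm{Id}+t\Phi)\O$ with the diffeomorphism $\Psi_t:=\mathrm{Id}+t\Phi$. The key point is that the divergence constraint is not preserved by pull-back, so one cannot naively set $\tilde\u:=\u_{\O_{t\Phi}}\circ\Psi_t$. Instead I would use the Piola transform: set $\tilde\u(x):=\det(D\Psi_t(x))\,\big(D\Psi_t(x)\big)^{-1}\u_{\O_{t\Phi}}(\Psi_t(x))$, which has the crucial property $\div_x\tilde\u = \det(D\Psi_t)\,(\div_y\u_{\O_{t\Phi}})\circ\Psi_t$, so that $\div\u_{\O_{t\Phi}}=0$ in $\O_{t\Phi}$ is equivalent to $\div\tilde\u=0$ in $\O$, and the homogeneous Dirichlet condition is likewise preserved. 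Under this change of variables the eigenvalue equation \eqref{Eq:Eigenequation} becomes a system of the form
\[
\begin{cases}
-\div\big(A_t\,\n\tilde\u\big)+B_t\,\n\tilde p = \lambda\, C_t\,\tilde\u & \text{ in }\O,\\
\div\tilde\u = 0 & \text{ in }\O,\\
\tilde\u = 0 & \text{ on }\partial\O,
\end{cases}
\]
where $A_t,B_t,C_t$ are matrix fields depending smoothly (indeed analytically) on $t$ through $D\Psi_t$ and $\det D\Psi_t$, with $A_0=B_0=C_0=\mathrm{Id}$; all the $t$-dependence is now in the coefficients, while the function space $\{\v\in W^{1,2}_0(\O;\R^2):\div\v=0\}$ is fixed.

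Next I would set up the abstract framework. Let $H:=\{\v\in W^{1,2}_0(\O;\R^2):\div\v=0\}$ and consider the normalisation $\int_\O \Vert\tilde\u\Vert^2 = 1$ (or, more robustly against sign/phase ambiguity, an orthogonality-to-$\u_\O$ normalisation is not needed here since the eigenvalue is simple). Define a map
\[
\mathscr G:\ \R\times H\times\R\ \longrightarrow\ H^*\times\R,\qquad \mathscr G(t,\tilde\u,\lambda)=\Big(\text{weak form of the PDE in }H^*,\ \textstyle\int_\O\Vert\tilde\u\Vert^2-1\Big),
\]
where the pressure has been eliminated by working in the divergence-free space $H$ (this is the standard device: testing against divergence-free fields kills $\n\tilde p$, and $\tilde p$ is recovered afterwards by De Rham's lemma / the surjectivity of $\div$ on $W^{1,2}_0$). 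Then $\mathscr G(0,\u_\O,\lambda_1(\O))=0$, the map $t\mapsto\mathscr G(t,\cdot,\cdot)$ is of class $C^2$ (in fact $C^\infty$) because the coefficients $A_t,B_t,C_t$ are smooth in $t$ with values in $L^\infty$, and the partial differential $D_{(\tilde\u,\lambda)}\mathscr G(0,\u_\O,\lambda_1(\O))$ is an isomorphism of $H\times\R$ onto $H^*\times\R$ — this is exactly where simplicity of $\lambda_1(\O)$ enters. Indeed that differential is $(\v,\mu)\mapsto\big(-\Delta\v-\lambda_1\v-\mu\u_\O\,,\ 2\int_\O\langle\u_\O,\v\rangle\big)$ (in the weak sense on $H$), and its invertibility is the classical Fredholm-alternative statement: $-\Delta-\lambda_1$ restricted to $H$ has one-dimensional kernel spanned by $\u_\O$ and cokernel spanned by $\u_\O$, so adjoining the rank-one map $\mu\mapsto-\mu\u_\O$ and the scalar equation $2\int\langle\u_\O,\v\rangle=0$ produces a bijection. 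The implicit function theorem then yields $C^2$ maps $t\mapsto(\tilde\u(t),\lambda(t))$ near $t=0$ with $\mathscr G(t,\tilde\u(t),\lambda(t))=0$; by uniqueness of the simple first eigenvalue these coincide with $\lambda_1(\O_{t\Phi})$ and (the Piola pull-back of) $\u_{\O_{t\Phi}}$ for $t$ small, so $f_\Phi$ is $C^2$ at $0$. Finally, undoing the Piola transform — which is a $C^\infty$ operation in $t$ at fixed $x$ and maps $W^{1,2}$ to $W^{1,2}$ continuously — transfers twice-differentiability to $t\mapsto\u_{\O_{t\Phi}}$ itself, defining $\u_\Phi'$ as the $t=0$ derivative, and to $t\mapsto\lambda_1(\O_{t\Phi})$, defining $\langle d\lambda_1(\O),\Phi\rangle$ and $\langle d^2\lambda_1(\O)\Phi,\Phi\rangle$.

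The main obstacle, and the only genuinely non-routine point, is the correct handling of the incompressibility constraint under domain perturbation: one must choose a transport (the Piola transform) that makes the constraint domain-independent, verify that the transformed equation still has the structure of a spectral problem with $t$-analytic coefficients on a fixed divergence-free space, and check that eliminating the pressure does not break the isomorphism property. Everything else — the smoothness of the coefficient maps $t\mapsto A_t,B_t,C_t$, the Fredholm computation of the linearised operator (which uses only simplicity of $\lambda_1(\O)$, already granted by Lemma~\ref{Le:Multiplicity2d} in the case $\O=\B_2$ and assumed in the statement in general), and the final change of variables back — is standard and parallels the scalar treatment in \cite[Chapter 5]{zbMATH06838450} and the application of \cite{zbMATH03656460} as in \cite{DambrineKateb}. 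I would only sketch these, emphasising the Piola-transform step and the structure of $D_{(\tilde\u,\lambda)}\mathscr G$, and cite the scalar references for the remaining bookkeeping.
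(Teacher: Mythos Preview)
Your proposal is correct and follows the same route the paper indicates: the paper does not actually prove this lemma but simply states that it ``follows from the implicit function theorem of Mignot, Murat \& Puel \cite{zbMATH03656460}'' (see also the remark pointing to \cite[Chapter 5]{zbMATH06838450} and \cite{DambrineKateb}). Your sketch is a faithful and careful unpacking of what that citation entails in the Stokes setting, and in particular your emphasis on the Piola transform to keep the divergence-free constraint on a fixed function space is exactly the non-routine ingredient the paper leaves implicit in the references.
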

For a detailed introduction to the Hadamard shape calculus, we refer to \cite[Chapter 5]{zbMATH06838450} and to \cite{MR2720607}. To proceed with the proof of Theorem \ref{Th:Optimality2d}, we need tractable expressions for the first and second-order shape derivatives of the eigenvalue at a domain $\O$.

\begin{proposition}\label{Pr:DifferentiabilityFormulae}
For any $\mathscr C^2$ domain $\O$ such that $\lambda_1(\O)$ is simple, let $\u_\O$ be its associated first eigenfunction. For any $\Phi\in \mathscr X$, the shape derivative $\u_\Phi'$ solves 
\begin{equation} \label{Pb:der}
\begin{cases}-\Delta {\u_\Phi'}+\nabla p'=\lambda_1(\O){\u_\Phi'}+   \langle d\lambda_1(\O),\Phi\rangle  {\u_\O} & \textrm{in }\O\\
\nabla\cdot {\u_\Phi'}=0 & \textrm{in }\O\\
{\u_\Phi'}=-\nabla {\u_\O} \nu{\langle \Phi,\nu\rangle} & \textrm{on }\partial \O\,, 
\\ \int_{\O} \langle \u_\O,\u_\Phi'\rangle=0.
\end{cases}
\end{equation}
The first order derivative of $\lambda_1$ is 
\begin{equation}\label{Eq:D1Lambda}
 \langle d\lambda_1(\O),\Phi\rangle=-\int_{\partial \O}\Vert( \n \u_\O)\nu\Vert^2\langle \Phi,\nu\rangle.\end{equation} If, in addition, the vector field $\Phi$ is normal to $\partial \O$, the second-order shape derivative of $\lambda_1$ at $\O$ is given by \begin{equation}\label{Eq:D2Lambda}
   \langle d^2\lambda_1(\O)\Phi,\Phi\rangle =2\int_{\partial \O}\langle \u_\Phi',(\n \u_\Phi')\nu\rangle+\int_{\partial \O} H \Vert (\n \u_\O)\nu\Vert^2\langle \Phi,\nu\rangle^2-2\int_{\partial \O}\langle (\n \u_\O)\nu,\n p_\O\rangle
\end{equation} where $H$ is the mean curvature of $\partial \O$ and $p_\O$ is the pressure field associated with $\u_\O$.
\end{proposition}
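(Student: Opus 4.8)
The plan is to establish the three assertions in turn: the boundary value problem \eqref{Pb:der}, then \eqref{Eq:D1Lambda}, and finally \eqref{Eq:D2Lambda}.

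\emph{Step 1 (shape derivative).} A plain pull-back by $T_t:=\mathrm{Id}+t\Phi$ destroys the incompressibility constraint, so I would transport the eigenproblem to the fixed domain $\O$ by the Piola transform: set $\v^t:=\det(DT_t)\,(DT_t)^{-1}\bigl(\u_{\O_{t\Phi}}\circ T_t\bigr)$ and transport the pressure accordingly. Then $\v^t$ is divergence-free on $\O$, vanishes on $\partial\O$, has transported $L^2$-norm identically equal to $1$, and $\v^t$ (together with the transported pressure) solves a Stokes eigenproblem on $\O$ with coefficients smooth in $t$. As $\lambda_1(\O)$ is simple, Lemma~\ref{Le:Differentiability2D} applies and gives twice differentiability in $t$; differentiating the transported equation at $t=0$ and reverting to the Eulerian shape derivative $\u_\Phi'$ yields the first two lines of \eqref{Pb:der}. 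The boundary condition follows by differentiating $\u_{\O_{t\Phi}}=0$ on $\partial\O_{t\Phi}$ and using $\partial_\nu\u_\O=(\n\u_\O)\nu$ (Remark~\ref{Re:Tangential}); the last line of \eqref{Pb:der} follows by differentiating $\int_{\O_{t\Phi}}\|\u_{\O_{t\Phi}}\|^2=1$, the boundary term vanishing since $\u_\O=0$ on $\partial\O$.

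\emph{Step 2 (first-order formula).} The normalisation gives $\lambda_1(\O_{t\Phi})=\int_{\O_{t\Phi}}\|\n\u_{\O_{t\Phi}}\|^2$, so Hadamard's formula yields $\langle d\lambda_1(\O),\Phi\rangle=2\int_\O\n\u_\O\colon\n\u_\Phi'+\int_{\partial\O}\|\n\u_\O\|^2\langle\Phi,\nu\rangle$. Integrating the bulk term by parts, substituting $-\Delta\u_\O=\lambda_1\u_\O-\n p_\O$ and using $\n\cdot\u_\Phi'=0$, one gets $\int_\O\n\u_\O\colon\n\u_\Phi'=\int_{\partial\O}\langle(\n\u_\O)\nu,\u_\Phi'\rangle-\int_{\partial\O}p_\O\langle\u_\Phi',\nu\rangle+\lambda_1\int_\O\langle\u_\O,\u_\Phi'\rangle$. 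The last summand vanishes by the normalisation in \eqref{Pb:der}; the pressure term vanishes since $\langle\u_\Phi',\nu\rangle=-\langle(\n\u_\O)\nu,\nu\rangle\langle\Phi,\nu\rangle=0$ (Remark~\ref{Re:Tangential}); and $\langle(\n\u_\O)\nu,\u_\Phi'\rangle=-\|(\n\u_\O)\nu\|^2\langle\Phi,\nu\rangle$ by the boundary condition in \eqref{Pb:der}. Since moreover $\n\u_\O=\bigl((\n\u_\O)\nu\bigr)\otimes\nu$ on $\partial\O$ (Remark~\ref{Re:Tangential}), $\|\n\u_\O\|^2=\|(\n\u_\O)\nu\|^2$ there, and collecting the contributions gives \eqref{Eq:D1Lambda}.

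\emph{Step 3 (second-order formula).} For $\Phi$ normal to $\partial\O$, I would exploit the \emph{exact} identity $\lambda_1(\O_{t\Phi})=Q_t(\v^t)$, where $Q_t$ is the transported Dirichlet energy from Step 1 — a quadratic form in $(\v^t,\n\v^t)$ with $t$-smooth coefficients, equal to $\int_\O\|\n\v\|^2$ at $t=0$ — and expand it to second order in $t$. In the $t^2$-coefficient, the contribution of the second-order term of $\v^t$ is eliminated by the weak formulation of \eqref{Eq:Eigenequation} and the $t$-independence of the transported normalisation, just as the $\u_\Phi'$-free cancellations of Step 2. One is left with a quadratic bulk term $2\bigl(\int_\O\|\n\u_\Phi'\|^2-\lambda_1\int_\O\|\u_\Phi'\|^2\bigr)$, which upon testing \eqref{Pb:der} against $\u_\Phi'$ — the pressure term disappearing since $\langle\u_\Phi',\nu\rangle=0$, the forcing term since $\int_\O\langle\u_\O,\u_\Phi'\rangle=0$ — equals $2\int_{\partial\O}\langle\u_\Phi',(\n\u_\Phi')\nu\rangle$; the remaining terms, produced by the first and second $t$-derivatives of the coefficients of $Q_t$ and of the transported volume element, reduce after integration by parts — using $-\Delta\u_\O+\n p_\O=\lambda_1\u_\O$ to rewrite the second normal derivatives of $\u_\O$ on $\partial\O$, and the assumption that $\Phi$ is normal — to the mean-curvature term and the pressure term appearing in \eqref{Eq:D2Lambda}. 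Assembling the three contributions gives \eqref{Eq:D2Lambda}.

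The hard part is Step 3: computing the first and second $t$-derivatives of the Piola-transported coefficients and of the surface and volume elements, and then converting every resulting volume integral into one of the boundary integrals of \eqref{Eq:D2Lambda} while scrupulously tracking the pressure and the divergence-free constraint at each order. The pressure term $-2\int_{\partial\O}\langle(\n\u_\O)\nu,\n p_\O\rangle$ in \eqref{Eq:D2Lambda} is the genuinely new feature compared with the classical scalar Dirichlet-Laplacian second variation; it arises precisely because $\u_\O$ satisfies no clean second normal-derivative identity on $\partial\O$ without invoking $-\Delta\u_\O=\lambda_1\u_\O-\n p_\O$. Restricting to normal $\Phi$ is what makes the tangential-derivative contributions vanish and keeps the computation tractable.
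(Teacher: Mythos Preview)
Your Steps 1 and 2 are correct and close in spirit to the paper. The paper simply cites standard shape-derivative machinery for \eqref{Pb:der}, while you outline the Piola-transport argument explicitly; for \eqref{Eq:D1Lambda} the paper tests \eqref{Pb:der} against $\u_\O$ rather than differentiating the Dirichlet energy, but the two computations are equivalent and your cancellations (via Remark~\ref{Re:Tangential} and the normalisation) are right.

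Step 3 is where you diverge substantially. You propose expanding the Piola-transported Rayleigh quotient $Q_t(\v^t)$ to second order and then reducing every bulk term to a boundary integral. This can be made to work, but it is a long computation, and your description of how ``the remaining terms reduce after integration by parts to the mean-curvature term and the pressure term'' is a promise rather than an argument. The paper instead takes the already-established boundary formula \eqref{Eq:D1Lambda} and applies Hadamard's formula for the derivative of an integral over a moving boundary: for normal $\Phi$ this gives directly
\[
\langle d^2\lambda_1(\O)\Phi,\Phi\rangle
=-2\int_{\partial\O}\langle(\n\u_\Phi')\nu,(\n\u_\O)\nu\rangle\,\langle\Phi,\nu\rangle
-\int_{\partial\O}\Bigl(H\|(\n\u_\O)\nu\|^2+\partial_\nu\|(\n\u_\O)\nu\|^2\Bigr)\langle\Phi,\nu\rangle^2,
\]
and then computes $\partial_\nu\bigl((\n\u_\O)\nu\bigr)$ on $\partial\O$ from the decomposition $\Delta\u_\O=\Delta_\tau\u_\O+H(\n\u_\O)\nu+\partial_\nu((\n\u_\O)\nu)$ together with $-\Delta\u_\O+\n p_\O=\lambda_1\u_\O$ and $\u_\O|_{\partial\O}=0$. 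This immediately produces both the curvature and the pressure contribution with no Piola bookkeeping. Your route buys a self-contained derivation not relying on the boundary Hadamard formula; the paper's route buys a two-line computation once \eqref{Eq:D1Lambda} is known. Both use the same identity $\int_\O\|\n\u_\Phi'\|^2-\lambda_1\int_\O\|\u_\Phi'\|^2=\int_{\partial\O}\langle\u_\Phi',(\n\u_\Phi')\nu\rangle$ (your test of \eqref{Pb:der} against $\u_\Phi'$ is exactly what the paper later does in Lemma~\ref{Le:Der2}).
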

\begin{proof}[Proof of Proposition \ref{Pr:DifferentiabilityFormulae}]
That $\u_\Phi'$ solves \eqref{Pb:der} is a standard consequence of general formulae for the shape differentiation of Dirichlet boundary value problem, and we refer to \cite[Chapter 5]{zbMATH06838450} for the detailed computations. {To derive the expression of $\langle d\lambda_1(\O),\Phi\rangle$, multiply \eqref{Pb:der} by $u_\O$ to obtain:
\begin{align*}
\langle d\lambda_1(\O),\Phi\rangle&=\langle d\lambda_1(\O),\Phi\rangle\int_\O\Vert \u_\O\Vert^2\quad \left(\text{ since }\int_\O \Vert \u_\O\Vert^2=1\right)
\\&=\int_\O  \n \u_\Phi'\colon\n\u_\O -\underbrace{\int_{\partial\O}\langle (\n\u_\Phi')\nu,\n\u_\O\rangle}_{=0\text{ since }\u_\O=0\text{ on }\partial \O}
+\underbrace{\int_\O\langle \n p',\u_\O\rangle}_{=0\text{ since }\n\cdot\u_\O=0}-\lambda_1(\O)\underbrace{\int_\O \langle \u_\Phi',\u_\O\rangle}_{=0\text{ since }\int_\O \langle \u_\O\,, \u_\Phi'\rangle=0}
\\&=-\int_\O\langle \Delta \u_\O\,, \u_\Phi'\rangle+\int_{\partial \O}\langle (\n\u_\O)\nu,\u_\Phi'\rangle
\\&=-\underbrace{\int_\O\langle \n p_\O,\u_\Phi'\rangle}_{=0\text{ since }\n\cdot\u_\Phi'=0}+\lambda_1(\O)\underbrace{\int_\O \langle \u_\Phi',\u_\O\rangle}_{=0\text{ since }\int_\O \langle \u_\O\,, \u_\Phi'\rangle=0}+\int_{\partial \O}\langle (\n\u_\O)\nu,\u_\Phi'\rangle
\\&=-\int_{\partial \O} \Vert (\n\u_\O)\nu\Vert^2\langle \Phi,\nu\rangle.
\end{align*}} 

To obtain the expression for $   \langle d^2\lambda_1(\O)\Phi,\Phi\rangle $ for normal vector fields, it suffices to apply the Hadamard formula for integrals on variable boundaries \cite[Proposition 5.4.18]{zbMATH06838450}. This yields
\begin{align*}
   \langle d^2\lambda_1(\O)\Phi,\Phi\rangle &=-2\int_{\partial \O}\langle (\n \u_\Phi')\nu,(\n\u_\O)\nu\rangle \langle \Phi,\nu\rangle
\\&-\int_{\partial \O}\left(H\Vert (\n \u_\O)\nu\Vert^2+\frac{\partial ||(\n \u_\O)\nu||^2}{\partial \nu}\right)\langle \Phi,\nu\rangle^2.
\end{align*}
However, recall that, introducing $\Delta_\tau$ the tangential laplacian on $\partial \O$, we have \[ \Delta\u=\Delta_\tau \u+H(\n \u)\nu+\frac{\partial ((\n \u)\nu)}{\partial \nu}.\]{
In particular, this implies, taking into account that $\Delta_\tau\u_\O=0\,, \u_\O=0$ on $\partial\O$,
\[\frac{\partial ((\n \u_\O)\nu)}{\partial \nu}=\Delta \u_\O-\Delta_\tau\u_\O-H(\n\u_\O)\nu=-\lambda_1(\O)\u_\O-\n p_\O-H(\n\u_\O)\nu=-\n p_\O-H(\n\u_\O)\nu.\]
Consequently, 
\begin{equation}\label{Eq:Ono}
\frac{\partial \Vert (\n \u_\O)\nu\Vert^2}{\partial\nu}=2\left\langle (\n\u_\O)\nu,\frac{\partial( (\n\u_\O)\nu)}{\partial\nu}\right\rangle=-2\langle \n p_\O\,, (\n\u_\O)\nu\rangle-2H\Vert (\n\u_\O)\nu\Vert^2,
\end{equation}
so that 
\[ H\Vert (\n \u_\O)\nu\Vert^2+\frac{\partial ||(\n \u_\O)\nu||^2}{\partial \nu}=-H\Vert (\n\u_\O)\nu\Vert^2-2\langle \n p_\O\,, (\n\u_\O)\nu\rangle\]and, finally, since $(\n\u_\O)\nu\langle\Phi,\nu\rangle=-\u_\Phi'$ on $\partial \O$,
\begin{align*}
   \langle d^2\lambda_1(\O)\Phi,\Phi\rangle &=-2\int_{\partial \O}\langle (\n \u_\Phi')\nu,(\n\u_\O)\nu\rangle \langle \Phi,\nu\rangle
-\int_{\partial \O}\left(H\Vert (\n \u_\O)\nu\Vert^2+\frac{\partial ||(\n \u_\O)\nu||^2}{\partial \nu}\right)\langle \Phi,\nu\rangle^2
\\&=2\int_{\partial \O}\langle \u_\Phi',(\n \u_\Phi')\nu\rangle+\int_{\partial \O} H \Vert (\n \u_\O)\nu\Vert^2\langle \Phi,\nu\rangle^2-2\int_{\partial \O}\langle (\n \u_\O)\nu,\n p_\O\rangle
\langle \Phi,\nu\rangle^2
\end{align*}}
as claimed.
\end{proof}

\subsection{The ball is a critical point for Problem \eqref{Eq:Pv} (in dimension two)}\label{sec:ballcritpt}
We first prove that $\B_2$ is a critical point for $\mathcal F$ (recall that $\mathcal F(\O)=|\O|\lambda_1(\O)$). 

\begin{proposition}\label{prop:B2critical}
The ball $\B_2$ is a critical point for the problem \eqref{Eq:Pv}: for any $\Phi\in \mathscr X$, there holds
\[\langle d\mathcal F(\B_2),\Phi\rangle=0.\]
\end{proposition}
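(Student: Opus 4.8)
The plan is to show that the first-order shape derivative of the scale-invariant functional $\mathcal F(\O)=|\O|\lambda_1(\O)$ vanishes at $\B_2$ by combining three ingredients: the explicit formula \eqref{Eq:D1Lambda} for $\langle d\lambda_1(\O),\Phi\rangle$, the elementary formula $\langle d|\O|,\Phi\rangle=\int_{\partial\O}\langle\Phi,\nu\rangle$ for the volume, and the fact (Remark \ref{rk:usefulComp}) that $\|(\n\u_{\B_2})\nu\|^2$ is constant on $\partial\B_2$. First I would write the Leibniz rule for the product:
\[
\langle d\mathcal F(\B_2),\Phi\rangle=\langle d|\O|(\B_2),\Phi\rangle\,\lambda_1(\B_2)+|\B_2|\,\langle d\lambda_1(\B_2),\Phi\rangle,
\]
which is legitimate since $\lambda_1(\B_2)$ is simple, hence both factors are differentiable by Lemma \ref{Le:Differentiability2D}.

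Next I would evaluate each term on the ball. From Remark \ref{rk:usefulComp} we have $(\n\u_{\B_2})\nu=cf'(1)(-\sin\theta,\cos\theta)^\top$ on $\partial\B_2$, so $\|(\n\u_{\B_2})\nu\|^2=c^2 f'(1)^2=:\kappa$ is a constant independent of $\theta$. Plugging this into \eqref{Eq:D1Lambda} gives
\[
\langle d\lambda_1(\B_2),\Phi\rangle=-\kappa\int_{\partial\B_2}\langle\Phi,\nu\rangle.
\]
On the other hand $\langle d|\O|(\B_2),\Phi\rangle=\int_{\partial\B_2}\langle\Phi,\nu\rangle$. Denoting $I:=\int_{\partial\B_2}\langle\Phi,\nu\rangle$, the product rule yields $\langle d\mathcal F(\B_2),\Phi\rangle=\lambda_1(\B_2)\,I-|\B_2|\,\kappa\,I=(\lambda_1(\B_2)-|\B_2|\kappa)I$.

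It then remains to check that $\lambda_1(\B_2)=|\B_2|\,\kappa$, i.e. that the bracket vanishes; equivalently, one can bypass this identity altogether by invoking scale invariance. I would favour the second route: since $\mathcal F$ is invariant under dilations, the derivative of $\mathcal F$ in the direction of the dilation field $\Phi_0(x)=x$ is zero, and for that field $I=\int_{\partial\B_2}\langle x,\nu\rangle=\int_{\partial\B_2}1=|\partial\B_2|>0$; hence $\lambda_1(\B_2)-|\B_2|\kappa=0$, and therefore $\langle d\mathcal F(\B_2),\Phi\rangle=0$ for \emph{every} $\Phi\in\mathscr X$. Alternatively one verifies $\lambda_1(\B_2)=|\B_2|\kappa$ directly from $\lambda_1(\B_2)=j_{1,1}^2$ and an explicit computation of $f'(1)$ via Bessel identities, but the scaling argument is cleaner. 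The only mild obstacle is the bookkeeping for the product rule and making sure the constant $\kappa$ really is $\theta$-independent, but both are settled by Remark \ref{rk:usefulComp}; there is no genuine difficulty here, which is consistent with the statement being the expected Faber--Krahn-type criticality.
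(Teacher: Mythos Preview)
Your proof is correct and follows the same overall structure as the paper: apply the product rule to $\mathcal F=|\cdot|\,\lambda_1$, use Remark~\ref{rk:usefulComp} to see that $\Vert(\nabla\u_{\B_2})\nu\Vert^2$ is a constant $\kappa$ on $\partial\B_2$, and then verify the identity $\lambda_1(\B_2)=|\B_2|\,\kappa$ (this is exactly \eqref{Eq:Opt}). The only genuine difference lies in this last verification. The paper establishes \eqref{Eq:Opt} by an explicit Bessel computation (using $J_1'(x)=J_0(x)-J_1(x)/x$ to evaluate $f'(1)$), whereas you deduce it from the scale invariance of $\mathcal F$ applied to the dilation field. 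Your route is shorter and conceptually cleaner; the paper's route has the minor advantage of giving the exact numerical value $\kappa=j_{1,1}^2/\pi$, though in the sequel (Lemma~\ref{Le:Der2}) only the identity \eqref{Eq:Opt} itself is used, so nothing is lost. One small technical point: the field $\Phi_0(x)=x$ is not compactly supported and hence not literally in $\mathscr X$; you should replace it by $\chi(x)x$ with $\chi$ a smooth cutoff equal to $1$ on a neighbourhood of $\overline{\B_2}$, which leaves the argument unchanged.
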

\begin{proof}[Proof of Proposition \ref{prop:B2critical}]
Fix $\Phi\in \mathscr X$. {Recall that $\mathscr{X}$ has been introduced in Lemma~\ref{Le:Differentiability2D}}. The derivative of the volume $\mathrm{Vol}:\O\mapsto |\O|$ at $\B_2$ in the direction $\Phi$ is given by 
\begin{equation}\label{Eq:DerVolume} \langle d\mathrm{Vol}(\B_2),\Phi\rangle=\int_{\partial \B_2}\langle \Phi,\nu\rangle.\end{equation} For the sake of notational convenience, introduce  
\[ \varphi:=\langle \Phi,\nu\rangle.\] Then, \eqref{Eq:DerVolume} and Proposition \ref{Pr:DifferentiabilityFormulae} yield
\[\langle d\mathcal F(\B_2),\Phi\rangle=\lambda_1(\B_2)\int_{\partial \B_2} \varphi-|\B_2| \int_{\partial \B_2}\Vert (\n \u_{\B_2})\nu\Vert^2\varphi.\]
Consequently, $\B_2$ is a critical point for $\mathcal F$ if, and only if, 
\begin{equation}\label{Eq:Opt} 
|\B_2|\cdot\Vert (\n \u_{\B_2})\nu\Vert^2=\lambda_1(\B_2).
\end{equation} 

{ Let us now prove that \eqref{Eq:Opt} indeed holds. Recall from Remark~\ref{rk:usefulComp} that if we introduce $f(r):=J_1(j_{1,1}r)$ then we have, on $\partial \O$, 
\[ (\n \u_{\B_2})\nu= cf'(1)\begin{pmatrix}-\sin(\theta)\\ \cos(\theta)\end{pmatrix},\text{ with }c=\frac1{\sqrt{\pi}|J_0(j_{1,1})|}\text{ whence }\Vert (\n \u_{\B_2})\nu\Vert^2=\frac{f'(1)^2}{\pi J_0(j_{1,1})^2}.\]
 Since $J_1'(x)=J_0(x)-J_1(x)/x$ for any $x>0$, we further derive
\[f'(1)=j_{1,1} J_1'(j_{1,1})=j_{1,1} J_0(j_{1,1}).\]
Consequently, 
\[ \Vert (\n \u_{\B_2})\nu\Vert^2=\frac{f'(1)^2}{\pi J_0(j_{1,1})^2}=\frac{j_{1,1}^2}{\pi}=\frac{j_{1,1}^2}{|\B_2|}.\]
The conclusion follows.
}
\end{proof}

\subsection{The ball satisfies strong second-order conditions for Problem \eqref{Eq:Pv}}
In this section, we turn to second-order optimality conditions for $\mathcal F$ at $\B_2$. 

\paragraph{Expression of the second order derivative of $\mathcal F$ at $\B_2$.}
By the Hadamard structure theorem, since $\B_2$ is a critical point of $\mathcal F$, the second order derivative $\langle d^2\mathcal F(\B_2)\Phi,\Phi\rangle$ only depends on the normal trace $\varphi=\langle \Phi,\nu\rangle$ of $\Phi$. Let us show the following:
\begin{lemma}\label{Le:Der2}
For any $\Phi\in \mathscr X$, there holds
\[ \langle d^2\mathcal F(\B_2)\Phi,\Phi\rangle=2\lambda_1(\B_2)\left(\int_{\partial \B_2}\varphi^2\right)-\frac{2\lambda_1(\B_2)}{\pi}\left(\int_{\partial \B_2}\varphi\right)^2+2\pi I[\u_\Phi']\] where $I$ is a quadratic form defined as
\[I[\u_\Phi']=\int_{\B_2}\Vert \n \u_\Phi'\Vert^2-\lambda_1(\B_2)\int_{\B_2}\Vert \u_\Phi'\Vert^2.\]
\end{lemma}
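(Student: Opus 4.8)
The plan is to compute $\langle d^2\mathcal F(\B_2)\Phi,\Phi\rangle$ by combining Leibniz's rule for the product $\mathcal F = \mathrm{Vol}\cdot\lambda_1$ with the expression \eqref{Eq:D2Lambda} for $\langle d^2\lambda_1(\B_2)\Phi,\Phi\rangle$, simplifying everything using the explicit form of $\u_{\B_2}$ from Remark \ref{rk:usefulComp} and the criticality identity \eqref{Eq:Opt} established in Proposition \ref{prop:B2critical}. Since $\B_2$ is a critical point of $\mathcal F$, the Hadamard structure theorem lets me assume without loss of generality that $\Phi$ is normal to $\partial\B_2$, so that \eqref{Eq:D2Lambda} applies directly and $\langle d^2\mathcal F(\B_2)\Phi,\Phi\rangle$ depends only on $\varphi=\langle\Phi,\nu\rangle$.

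First I would write, using Leibniz,
\[
\langle d^2\mathcal F(\B_2)\Phi,\Phi\rangle = \langle d^2\mathrm{Vol}(\B_2)\Phi,\Phi\rangle\,\lambda_1(\B_2) + 2\langle d\mathrm{Vol}(\B_2),\Phi\rangle\langle d\lambda_1(\B_2),\Phi\rangle + |\B_2|\,\langle d^2\lambda_1(\B_2)\Phi,\Phi\rangle.
\]
For a normal field the second shape derivative of the volume at $\B_2$ is $\langle d^2\mathrm{Vol}(\B_2)\Phi,\Phi\rangle = \int_{\partial\B_2} H\varphi^2$, and on $\B_2$ the mean curvature is $H=1$ (with the paper's normalisation; I would double-check the sign/normalisation convention against \cite{zbMATH06838450}). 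Next, from \eqref{Eq:D1Lambda} and \eqref{Eq:Opt} one has $\langle d\lambda_1(\B_2),\Phi\rangle = -\tfrac{\lambda_1(\B_2)}{|\B_2|}\int_{\partial\B_2}\varphi$, while $\langle d\mathrm{Vol}(\B_2),\Phi\rangle = \int_{\partial\B_2}\varphi$, so the cross term contributes $-\tfrac{2\lambda_1(\B_2)}{|\B_2|}\left(\int_{\partial\B_2}\varphi\right)^2 = -\tfrac{2\lambda_1(\B_2)}{\pi}\left(\int_{\partial\B_2}\varphi\right)^2$ since $|\B_2|=\pi$.

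The main work is the term $|\B_2|\langle d^2\lambda_1(\B_2)\Phi,\Phi\rangle$. Using \eqref{Eq:D2Lambda}, I need to handle three pieces. The boundary integral $-2\int_{\partial\B_2}\langle(\n\u_{\B_2})\nu,\n p_{\B_2}\rangle\varphi^2$ vanishes because the pressure is constant (Lemma \ref{Le:Multiplicity2d}), so $\n p_{\B_2}=0$. The curvature term $\int_{\partial\B_2}H\Vert(\n\u_{\B_2})\nu\Vert^2\varphi^2$ equals $\int_{\partial\B_2}\Vert(\n\u_{\B_2})\nu\Vert^2\varphi^2 = \tfrac{\lambda_1(\B_2)}{\pi}\int_{\partial\B_2}\varphi^2$ by \eqref{Eq:Opt} (recall $|\B_2|=\pi$), and multiplying by $|\B_2|=\pi$ gives $\lambda_1(\B_2)\int_{\partial\B_2}\varphi^2$. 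Together with the $\lambda_1(\B_2)\int_{\partial\B_2}H\varphi^2 = \lambda_1(\B_2)\int_{\partial\B_2}\varphi^2$ coming from the volume Hessian, this produces the coefficient $2\lambda_1(\B_2)$ in front of $\int_{\partial\B_2}\varphi^2$. The remaining term is $2|\B_2|\int_{\partial\B_2}\langle\u_\Phi',(\n\u_\Phi')\nu\rangle = 2\pi\int_{\partial\B_2}\langle\u_\Phi',(\n\u_\Phi')\nu\rangle$; here I would apply Green's formula to $\u_\Phi'$ on $\B_2$, using the system \eqref{Pb:der} that $\u_\Phi'$ satisfies. Integrating $-\Delta\u_\Phi'+\n p' = \lambda_1(\B_2)\u_\Phi' + \langle d\lambda_1(\B_2),\Phi\rangle\u_{\B_2}$ against $\u_\Phi'$ and integrating by parts, the pressure term drops since $\n\cdot\u_\Phi'=0$, the term with $\u_{\B_2}$ drops since $\int_{\B_2}\langle\u_{\B_2},\u_\Phi'\rangle=0$, and we are left with
\[
\int_{\partial\B_2}\langle\u_\Phi',(\n\u_\Phi')\nu\rangle = \int_{\B_2}\Vert\n\u_\Phi'\Vert^2 - \lambda_1(\B_2)\int_{\B_2}\Vert\u_\Phi'\Vert^2 = I[\u_\Phi'],
\]
which gives the last term $2\pi I[\u_\Phi']$. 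Collecting the three contributions yields exactly the claimed formula.

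The step I expect to be most delicate is bookkeeping the curvature/normalisation conventions: getting $H=1$ on $\B_2$ with the sign used in \cite[Proposition 5.4.18]{zbMATH06838450}, and making sure the Hadamard second-derivative formula for $\mathrm{Vol}$ is applied consistently with the same convention, so that the two $\lambda_1(\B_2)\int_{\partial\B_2}\varphi^2$ contributions genuinely add rather than partially cancel. The analytic content (Green's identity, the orthogonality and divergence-free conditions in \eqref{Pb:der}, constancy of the pressure, the identity \eqref{Eq:Opt}) is routine given the earlier results; the risk is purely in the signs and the normalisation of $H$.
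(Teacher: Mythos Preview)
Your approach is essentially identical to the paper's: Leibniz rule, formula \eqref{Eq:D2Lambda}, constancy of $p_{\B_2}$, the identity \eqref{Eq:Opt}, and then testing \eqref{Pb:der} against $\u_\Phi'$ to identify $\int_{\partial\B_2}\langle\u_\Phi',(\n\u_\Phi')\nu\rangle$ with $I[\u_\Phi']$. The only point you glossed over is that when you integrate $\int_{\B_2}\langle\u_\Phi',\n p'\rangle$ by parts, the divergence-free condition kills the bulk term but not the boundary term $\int_{\partial\B_2}p'\langle\u_\Phi',\nu\rangle$; this vanishes because the boundary condition in \eqref{Pb:der} gives $\langle\u_\Phi',\nu\rangle=-\langle(\n\u_{\B_2})\nu,\nu\rangle\varphi=0$ by Remark~\ref{Re:Tangential}.
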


\begin{proof}[Proof of Lemma \ref{Le:Der2}]
Observe that we have 
\[ \langle d^2\mathcal F(\B_2)\Phi,\Phi\rangle=\langle d^2\mathrm{Vol}(\B_2)\Phi,\Phi\rangle\lambda_1(\B_2)+2\langle d\mathrm{Vol}(\B_2),\Phi\rangle\langle d\lambda_1(\B_1),\Phi\rangle+|\B_2|\cdot   \langle d^2\lambda_1(\B_2)\Phi,\Phi\rangle .\]
However, we have 
\[ \langle d^2\mathrm{Vol}(\B_2)\Phi,\Phi\rangle=\int_{\partial \B_2} H\varphi^2.\]Using Proposition \ref{Pr:DifferentiabilityFormulae}, { the fact that $p_{\B_2}$, the pressure, is constant (see Lemma~\ref{Le:Multiplicity2d})} and that $H=1$ we obtain
\begin{align*}
\langle d^2\mathcal F(\B_2)\Phi,\Phi\rangle&=\lambda_1(\B_2)\int_{\partial \B_2}\varphi^2-2\int_{\partial \B_2}\varphi\int_{\partial \B_2}\Vert (\n \u_{\B_2})\nu\Vert^2\varphi
\\&+2|\B_2|\cdot\int_{\partial \B_2}\langle (\n \u_{\Phi}')\nu,\u_\Phi'\rangle+|\B_2|\cdot\int_{\partial \B_2}\Vert (\n \u_{\B_2})\nu\Vert^2\varphi^2.
\end{align*}
From \eqref{Eq:Opt} this equation simplifies as
\begin{align*}
\langle d^2\mathcal F(\B_2)\Phi,\Phi\rangle&=2\lambda_1(\B_2)\left(\int_{\partial \B_2}\varphi^2\right)-\frac{2\lambda_1(\B_2)}{|\B_2|}\left(\int_{\partial \B_2}\varphi\right)^2+2|\B_2|\cdot\int_{\partial \B_2}\langle (\n \u_{\Phi}')\nu,\u_\Phi'\rangle
\\&=2\lambda_1(\B_2)\left(\int_{\partial \B_2}\varphi^2\right)-\frac{2\lambda_1(\B_2)}{\pi}\left(\int_{\partial \B_2}\varphi\right)^2+2\pi\cdot\int_{\partial \B_2}\langle (\n \u_{\Phi}')\nu,\u_\Phi'\rangle.
\end{align*}
Now, multiply \eqref{Pb:der} by ${\u_\Phi'}$ and integrate by parts. We obtain
\begin{align*}
\int_{\B_2} \langle{\u_\Phi'}, \Delta {\u_\Phi'}\rangle&= \int_{\B_2} \langle{\u_\Phi'} ,\nabla p'\rangle-\lambda_1(\B_2)\int_{\B_2} \Vert{\u_\Phi'}\Vert^2-\langle d\lambda_1(\B_1),\Phi\rangle\int_{\B_2} \langle{\u}, {\u_\Phi'}\rangle\\
&=-\lambda_1(\B_2)\int_{\B_2} \Vert{\u_\Phi'}\Vert^2.
\end{align*}
To go from the first to the second line, we used the orthogonality of $\u_{\B_2}$ and of $\u_\Phi'$, as well as the following fact:
\begin{align*}
\int_{\B_2} \langle {\u_\Phi'}, \nabla p'\rangle&= -\int_{\B_2} p'\div{\u_\Phi'}+\int_{\partial{\B_2}}p'\langle{\u_\Phi'}, {\nu}\rangle
\\&=\int_{\partial{\B_2}}p'\langle{\u_\Phi'},\nu\rangle\\
&= -\int_{\partial{\B_2}}p'\langle(\nabla {\u_{\B_2}}) \nu, {\nu} \rangle\varphi
\\&=0.
\end{align*}
We used again, in this last part, the fact that $\langle(\n \u_{\B_2})\nu,\nu=0\rangle$ (see Remark \ref{Re:Tangential}). Consequently, we obtain 
\[\int_{\partial \B_2}\langle (\n \u_{\Phi}')\nu,\u_\Phi'\rangle=\int_{\B_2}\Vert \n \u_\Phi'\Vert^2+\int_{\B_2} \langle{\u_\Phi'}, \Delta {\u_\Phi'}\rangle=\int_{\B_2}\Vert \n \u_\Phi'\Vert^2-\lambda_1(\B_2)\int_{\B_2}\Vert \u_\Phi\Vert^2.\]

\end{proof}
Our main lemma is the following:
\begin{lemma}\label{Le:Coercivity}
There exists $c_0>0$ such that, for any $\Phi\in \mathscr X$ such that $\varphi=\langle \Phi,\nu\rangle\in\{1,\cos,\sin\}^\perp$ (for the $L^2$ scalar product on $L^2(\partial \B_2)$) there holds
\[ \langle d^2\mathcal F(\B_2)\Phi,\Phi\rangle\geq c_0\Vert \varphi\Vert_{W^{\frac12,2}(\partial \B_2)}^2.\]
\end{lemma}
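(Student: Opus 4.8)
The plan is to reduce the coercivity of $\langle d^2\mathcal F(\B_2)\Phi,\Phi\rangle$ to a lower bound on the quantity $I[\u'_\Phi]$ appearing in Lemma~\ref{Le:Der2}, and to obtain the latter from the variational characterisation of $\lambda_1(\B_2)$ together with the explicit spectral decomposition of Lemma~\ref{Le:Eigenbasis2d}. First I would decompose the normal trace $\varphi = \langle\Phi,\nu\rangle$ in the Fourier basis $\{1,\cos(n\cdot),\sin(n\cdot)\}_{n\geq 1}$ of $L^2(\partial\B_2)$; by assumption the components on $\{1,\cos,\sin\}$ vanish, so $\varphi = \sum_{n\geq 2}(a_n\cos(n\theta)+b_n\sin(n\theta))$. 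Using Lemma~\ref{rk:usefulComp}, the boundary data $\u'_\Phi = -(\n\u_{\B_2})\nu\,\varphi = -cf'(1)\varphi\,(-\sin\theta,\cos\theta)^\top$ on $\partial\B_2$ can be written mode by mode, and since the boundary datum has only Fourier modes $\geq 2$ (after multiplying the angular factors, modes $n$ combine with the $e^{\pm i\theta}$ of $(-\sin\theta,\cos\theta)$ to produce modes $n\pm 1 \geq 1$), the solution $\u'_\Phi$ of \eqref{Pb:der} decomposes along the eigenfunctions $\phi_{j,k,m}$ with $|j|\geq 1$, i.e. along eigenvalues $\lambda_{j,k} = j_{|j|+1,k}^2 > \lambda_1(\B_2) = j_{1,1}^2$.

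The key point is then that $I[\u'_\Phi] = \int_{\B_2}\Vert\n\u'_\Phi\Vert^2 - \lambda_1(\B_2)\int_{\B_2}\Vert\u'_\Phi\Vert^2 \geq 0$, with a quantitative gap. Here one must be careful: $\u'_\Phi$ does not vanish on $\partial\B_2$, so it is not admissible in the Rayleigh quotient directly; instead I would write $\u'_\Phi = \u_0' + R$ where $\u_0'\in W^{1,2}_0(\B_2;\R^2)$ is the part of $\u'_\Phi$ orthogonal (in the appropriate sense) to the first eigenspace and carrying the boundary correction, and control the cross terms using the equation \eqref{Pb:der} and the orthogonality $\int_{\B_2}\langle\u_{\B_2},\u'_\Phi\rangle = 0$. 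More efficiently, I would expand $\u'_\Phi$ directly in the Hilbert basis of Lemma~\ref{Le:Eigenbasis2d}: writing $\u'_\Phi = \sum_{(j,k,m)} c_{j,k,m}\phi_{j,k,m}$ (the mode $\phi_{0,k}$ being absent by the parity/Fourier argument above, and the $\lambda_1$-eigenfunction being absent by the orthogonality constraint), one gets $I[\u'_\Phi] = \sum (\lambda_{j,k}-\lambda_1(\B_2))\,c_{j,k,m}^2 \geq (\lambda_{1,2} - \lambda_1(\B_2))\sum' c_{j,k,m}^2 + (\lambda_{2,1}-\lambda_1)\sum'' c_{j,k,m}^2$, i.e. a strictly positive multiple of $\Vert\u'_\Phi\Vert_{L^2(\B_2)}^2$, hence also (by trace and elliptic estimates for the Stokes system with the given boundary data) a positive multiple of $\Vert\varphi\Vert_{W^{1/2,2}(\partial\B_2)}^2$.

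Assembling: by Lemma~\ref{Le:Der2}, $\langle d^2\mathcal F(\B_2)\Phi,\Phi\rangle = 2\lambda_1(\B_2)\int_{\partial\B_2}\varphi^2 - \frac{2\lambda_1(\B_2)}{\pi}\bigl(\int_{\partial\B_2}\varphi\bigr)^2 + 2\pi I[\u'_\Phi]$. Since $\varphi\perp 1$ in $L^2(\partial\B_2)$, the middle term vanishes, so $\langle d^2\mathcal F(\B_2)\Phi,\Phi\rangle = 2\lambda_1(\B_2)\Vert\varphi\Vert_{L^2(\partial\B_2)}^2 + 2\pi I[\u'_\Phi] \geq 2\lambda_1(\B_2)\Vert\varphi\Vert_{L^2(\partial\B_2)}^2$, which already gives $L^2$ coercivity; to upgrade to the $W^{1/2,2}$ norm one uses the extra positivity of $I[\u'_\Phi]$ together with the fact that $\Vert\varphi\Vert_{W^{1/2,2}(\partial\B_2)}^2 \lesssim \Vert\varphi\Vert_{L^2(\partial\B_2)}^2 + \sum_{n\geq 2} n(a_n^2+b_n^2)$, and the latter sum is controlled by $\Vert\u'_\Phi\Vert_{W^{1,2}(\B_2)}^2$, hence by $I[\u'_\Phi] + \Vert\u'_\Phi\Vert_{L^2}^2$, hence ultimately by $\langle d^2\mathcal F(\B_2)\Phi,\Phi\rangle$ itself. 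Rearranging yields the claimed inequality with an explicit $c_0 > 0$ depending only on $\lambda_1(\B_2)$ and the spectral gaps $\lambda_{2,1}-\lambda_1$, $\lambda_{1,2}-\lambda_1$.

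The main obstacle I anticipate is the bookkeeping in the second paragraph: correctly identifying which modes $\phi_{j,k,m}$ appear in $\u'_\Phi$ (the angular multiplication shifting Fourier indices by $\pm 1$ is what converts the constraint $\varphi\in\{1,\cos,\sin\}^\perp$ into the absence of the $\lambda_1$-eigenfunction and of the $\phi_{0,k}$ modes), and then making the equivalence $\Vert\u'_\Phi\Vert_{W^{1,2}(\B_2)} \asymp \Vert\varphi\Vert_{W^{1/2,2}(\partial\B_2)}$ quantitative and uniform — this requires either a clean elliptic regularity statement for the non-homogeneous Stokes system or an explicit mode-by-mode computation of the coefficients $c_{j,k,m}$ in terms of $a_n,b_n$ using the Bessel-function formulas of Lemma~\ref{Le:Eigenbasis2d}, checking that no resonance makes a coefficient blow up.
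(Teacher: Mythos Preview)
Your argument has a genuine gap at the step where you expand $\u'_\Phi$ in the Hilbert basis $\{\phi_{j,k,m}\}$ and deduce $I[\u'_\Phi]=\sum_{j,k,m}(\lambda_{j,k}-\lambda_1)c_{j,k,m}^2$. The eigenfunctions $\phi_{j,k,m}$ all satisfy homogeneous Dirichlet conditions; they form a Hilbert basis of the $L^2$-closure of divergence-free $W^{1,2}_0$ fields, and in that space you may certainly expand $\u'_\Phi$ (its normal trace vanishes). But $\u'_\Phi$ is \emph{not} in $W^{1,2}_0(\B_2;\R^2)$: its tangential trace $-(\n\u_{\B_2})\nu\,\varphi$ is nonzero. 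Hence the Parseval identity for the Dirichlet form fails: when you integrate $\int_{\B_2}\n\u'_\Phi\colon\n\phi_{j,k,m}$ by parts you pick up the boundary term $\int_{\partial\B_2}\langle(\n\phi_{j,k,m})\nu,\u'_\Phi\rangle$, which does not vanish. Consequently neither $\int_{\B_2}\Vert\n\u'_\Phi\Vert^2=\sum\lambda_{j,k}c_{j,k,m}^2$ nor $I[\u'_\Phi]=\sum(\lambda_{j,k}-\lambda_1)c_{j,k,m}^2$ is justified, and the spectral-gap inequality $I[\u'_\Phi]\geq c\Vert\u'_\Phi\Vert_{L^2}^2$ does not follow. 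You do flag the difficulty (``$\u'_\Phi$ does not vanish on $\partial\B_2$''), but then immediately sidestep it by invoking the direct expansion anyway; the alternative you sketch, splitting $\u'_\Phi=\u_0'+R$ with $\u_0'\in W^{1,2}_0$, is the right instinct but is not carried out, and the cross terms it generates are exactly what governs the sign of $I$. (As a side remark, your claim that the modes $\phi_{0,k}$ are absent is also off: $\phi_{0,k}$ has velocity Fourier mode $\pm1$, which \emph{does} appear in the boundary datum when $\varphi$ has mode $n=2$.)

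The paper avoids this issue entirely by a direct computation rather than an abstract spectral argument. It passes to the stream function $\psi'_\Phi$ with $\u'_\Phi=(-\partial_y\psi'_\Phi,\partial_x\psi'_\Phi)$, uses that $\psi'_\Phi$ solves $\Delta^2\psi'_\Phi+\lambda_1\Delta\psi'_\Phi=0$, and writes down the general solution as an explicit series in $J_n(j_{1,1}r)$ and $r^n$ times $\cos(n\theta),\sin(n\theta)$. The boundary conditions determine all coefficients in closed form in terms of the Fourier coefficients $(\alpha_n,\beta_n)$ of $\varphi$. Plugging into $I[\u'_\Phi]=\int_{\partial\B_2}\langle\u'_\Phi,(\n\u'_\Phi)\nu\rangle$ and combining with Lemma~\ref{Le:Der2} gives
\[
\langle d^2\mathcal F(\B_2)\Phi,\Phi\rangle=2\pi j_{1,1}^4\sum_{n\geq2}\gamma_n(\alpha_n^2+\beta_n^2),\qquad \gamma_n=\frac{J_n(j_{1,1})}{j_{1,1}J_{n+1}(j_{1,1})},
\]
after which positivity ($\gamma_n>0$) and the asymptotics $\gamma_n\sim 2(n+1)/j_{1,1}^2$ give the $W^{1/2,2}$ coercivity directly. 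Note that this explicit formula also shows why your soft argument could not have worked without further input: the individual contribution of $I$ at mode $n=1$ is actually \emph{negative} (since $\gamma_1=0$), and it is only the combination with the $2\lambda_1\Vert\varphi\Vert_{L^2}^2$ term, together with the restriction to $n\geq2$, that produces positivity.
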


The proof of this lemma relies on a diagonalisation of the quadratic form $I$.
\begin{remark}
The vector fields $\Phi$ such that $\varphi \in \operatorname{Span}\{\cos,\sin,1\}$ correspond to translation and dilations; of course, $\mathcal F$ is constant along such deformations, whence the need to assume $\varphi\in \operatorname{Span}\{1,\sin,\cos\}^\perp$ to obtain coercivity.
\end{remark}

\paragraph{Diagonalisation of the quadratic form $I$.}
 Write ${\u}_{\B_2}=(u_1,u_2)^\top$, and $\u_\Phi'\color{black}=(u_{\Phi,1}',u_{\Phi,2}')^\top$. With a slight abuse of notation, we write, in polar coordinates, ${\u}_{\B_2}(x)={\u}(r,\theta)=\phi_{0,1}(r,\theta)\et$, where $\phi_{0,1}$ is given by \eqref{Stokes_eig0} { and $\et=\begin{pmatrix}-\sin \theta\\ \cos\theta\end{pmatrix}$ }. Similarly, as we know that we can assume the perturbation $\Phi$ to be normal, we identify $\Phi$ with $\varphi$, and we decompose $\varphi:\partial \B_2\to \R$ as a Fourier series by considering the two sequences  $(\alpha_n)_{n\in \N}$ and $(\beta_n)_{n\in\N}$ in $\ell^2(\N)$ such that
\begin{equation}\label{exp:varphi}
\varphi(\theta)=\sum_{n=0}^{+\infty}(\alpha_n\cos (n\theta)+\beta_n\sin (n\theta)).
\end{equation}
Note that, since $J_1=-J_0'$, one has in particular 
\begin{equation}\label{expr:u1u2}
u_1(r,\theta)=c_1J_1(j_{1,1}r)\sin \theta\qquad \text{and}\qquad u_2(r,\theta)=c_1J_1(j_{1,1}r)\cos \theta
\end{equation}
where 
\begin{equation}\label{def:c1}
{c_1=\frac{1}{\sqrt{\pi}|J_0(j_{1,1})|}.}
\end{equation}
As $\div(\u_\Phi')=0$, we can write
\[ \u_\Phi'=\begin{pmatrix}-\partial_y \psi_\Phi'\\ \partial_x \psi_\Phi'\end{pmatrix}\] where $\psi_{\Phi'}\in W^{2,3}(\B_2)$.  Taking the $\curl$ of \eqref{Pb:der} it appears that $\psi_\Phi'$ satisfies

\[\Delta^2\psi_\Phi'+\lambda_1(\B_2)\Delta \psi_\Phi'=0\quad \text{in }\B_2\color{black}.
\]
As $\Delta \psi_\Phi'+\lambda_1(\B_2)\psi_\Phi'$ is harmonic, there exist $(c_n)_{n\in \N}\,, (d_n)_{n\in \N}\in \ell^2(\N)$ such that\[
\Delta\psi_\Phi'+\lambda_1(\B_2)\psi_\Phi'={\lambda_1(\B_2)}\sum_{n=0}^{+\infty} ( c_n\cos (n\theta)+ d_n\sin (n\theta))r^n.
\]

{Observe that the function $G$ given in polar coordinates by $G:(r,\theta)\mapsto \sum_{n=0}^{+\infty} ( c_n\cos (n\theta)+ d_n\sin (n\theta))r^n$ is a particular solution of this equation in $\B_2$. As the function $z$ given in polar coordinates by $z:(r,\theta)\mapsto \psi_\Phi'(r,\theta)-G(r,\theta)$ solves $\Delta z+\lambda_1(\B_2)z=0$, a separation of variables  yields
the existence of two sequences $(a_n)_{n\in \N}\,, (b_n)_{n\in \N}\in \ell^2(\N)$ such that}
\begin{equation}\label{expr:psi0844}
\psi_\Phi'(r,\theta)=\sum_{n=0}^{+\infty}\left((a_n \cos (n\theta)+b_n\sin (n\theta))J_n(j_{1,1}r)+(c_n \cos (n\theta)+d_n\sin (n\theta))r^n\right).
\end{equation}
Let us now identify all coefficients.

{\begin{lemma}\label{lem:coefftsanbncndc}
For every $n\geq 2$, one has
\begin{eqnarray}
a_n&=&\frac{j_{1,1}}{\sqrt{\pi}\left(nJ_n(j_{1,1})-j_{1,1}J_n'(j_{1,1})\right)}\alpha_n,\\
b_n&=&\frac{j_{1,1}}{\sqrt{\pi}\left(nJ_n(j_{1,1})-j_{1,1}J_n'(j_{1,1})\right)}\beta_n,\\
c_n&=&-\frac{j_{1,1}J_n(j_{1,1})}{\sqrt{\pi}\left(nJ_n(j_{1,1})-j_{1,1}J_n'(j_{1,1})\right)}\alpha_n,\\
d_n&=&-\frac{j_{1,1}J_n(j_{1,1})}{\sqrt{\pi}\left(nJ_n(j_{1,1})-j_{1,1}J_n'(j_{1,1})\right)}\beta_n.
\end{eqnarray}
\end{lemma}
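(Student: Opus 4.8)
The plan is to pin down the four sequences $(a_n),(b_n),(c_n),(d_n)$ by imposing the boundary conditions contained in \eqref{Pb:der}, namely $\u_\Phi'=-(\n\u_{\B_2})\nu\,\varphi$ on $\partial\B_2$. I would first convert this vectorial Dirichlet condition into two scalar boundary conditions for the stream function $\psi_\Phi'$, and only then match Fourier modes in $\theta$. Writing $\u_\Phi'=\curl\psi_\Phi'$ in polar coordinates one has $\curl\psi_\Phi'=-\tfrac1r\partial_\theta\psi_\Phi'\,\er+\partial_r\psi_\Phi'\,\et$, so the normal trace of $\u_\Phi'$ on $\partial\B_2$ equals $-\partial_\theta\psi_\Phi'(1,\cdot)$ and its tangential trace equals $\partial_r\psi_\Phi'(1,\cdot)$. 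Since $(\n\u_{\B_2})\nu$ is tangent to $\partial\B_2$ by Remark~\ref{Re:Tangential}, the normal trace of $\u_\Phi'$ vanishes, which forces $\partial_\theta\psi_\Phi'(1,\cdot)\equiv0$, i.e. $\psi_\Phi'$ is constant on $\partial\B_2$; as the stream function is defined up to an additive constant, I normalise it so that $\psi_\Phi'=0$ on $\partial\B_2$ (this only affects the pair $(a_0,c_0)$, which plays no role for $n\ge2$ and is in any case determined by the last line of \eqref{Pb:der}). For the tangential trace, Remark~\ref{rk:usefulComp} together with $\lambda_1(\B_2)=j_{1,1}^2$ (Lemma~\ref{Le:Multiplicity2d}) gives $\partial_r\psi_\Phi'(1,\theta)=-\tfrac{j_{1,1}}{\sqrt\pi}\,\varphi(\theta)$ on $\partial\B_2$.

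Next I would expand $\varphi$ as in \eqref{exp:varphi}; the orthogonality hypothesis $\varphi\in\{1,\cos,\sin\}^\perp$ entails $\alpha_0=\alpha_1=\beta_1=0$, so the low modes $n=0,1$ need not be discussed. Inserting the representation \eqref{expr:psi0844} into the two conditions $\psi_\Phi'(1,\cdot)=0$ and $\partial_r\psi_\Phi'(1,\cdot)=-\tfrac{j_{1,1}}{\sqrt\pi}\varphi$ and identifying coefficients, one obtains, for every $n\ge2$ and for the $\cos(n\theta)$ mode, the linear system
\[ a_nJ_n(j_{1,1})+c_n=0,\qquad j_{1,1}J_n'(j_{1,1})\,a_n+n\,c_n=-\tfrac{j_{1,1}}{\sqrt\pi}\,\alpha_n, \]
and the same system for the $\sin(n\theta)$ mode with $(b_n,d_n,\beta_n)$ in place of $(a_n,c_n,\alpha_n)$. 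Eliminating $c_n=-a_nJ_n(j_{1,1})$ from the second equation yields $a_n=\tfrac{j_{1,1}}{\sqrt\pi\,\delta_n}\alpha_n$ and then $c_n=-\tfrac{j_{1,1}J_n(j_{1,1})}{\sqrt\pi\,\delta_n}\alpha_n$, where $\delta_n:=nJ_n(j_{1,1})-j_{1,1}J_n'(j_{1,1})$, with the formulas for $b_n,d_n$ following identically. These are precisely the four identities of the statement.

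The only step that is not pure bookkeeping is the invertibility of these $2\times2$ systems, that is, $\delta_n\neq0$ for $n\ge2$ (were $\delta_n$ to vanish, the ansatz \eqref{expr:psi0844} would be missing a resonant mode and the system would be underdetermined). Using the Bessel recurrence $xJ_n'(x)=nJ_n(x)-xJ_{n+1}(x)$ I would rewrite $\delta_n=j_{1,1}J_{n+1}(j_{1,1})$, which is nonzero because $j_{1,1}$, being the first positive zero of $J_1$, cannot be a zero of $J_{n+1}$ when $n+1\ge2$ — two Bessel functions of the first kind of distinct integer orders share no common positive zero. This invertibility is also the only place where the precise value $\lambda_1(\B_2)=j_{1,1}^2$ is used, so it is really the crux of the matter; everything else is the bookkeeping of Fourier coefficients and signs (and note that an overall sign in the $(a_n,\dots)$ versus $(\alpha_n,\beta_n)$ is in any case immaterial for the quadratic form $I$ studied afterwards).
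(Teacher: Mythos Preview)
Your proof is correct and follows essentially the same route as the paper's: both convert the vectorial Dirichlet condition on $\u_\Phi'$ into the two scalar conditions $\partial_\theta\psi_\Phi'(1,\cdot)=0$ and $\partial_r\psi_\Phi'(1,\cdot)=-\tfrac{j_{1,1}}{\sqrt\pi}\varphi$, then match Fourier modes in \eqref{expr:psi0844} to obtain the same $2\times2$ systems. Your explicit check that $\delta_n=j_{1,1}J_{n+1}(j_{1,1})\neq0$ for $n\geq2$ (via the Bessel recurrence and the non-coincidence of zeros of $J_1$ and $J_{n+1}$) is a welcome addition that the paper leaves implicit.
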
}

{\begin{proof}[Proof of Lemma~\ref{lem:coefftsanbncndc}]
In what follows, to determine the coefficients $a_n$, $b_n$, $c_n$, $d_n$, we will exploit \eqref{expr:psi0844} and the boundary conditions
$$
u_1'=-\frac{\partial\psi_{\Phi}'}{\partial y}=-\frac{\partial u_1}{\partial\nu}\varphi \quad \text{on }\partial\B_2\qquad \text{and}\qquad u_2'=\frac{\partial\psi_{\Phi}'}{\partial x}=-\frac{\partial u_2}{\partial\nu}\varphi \quad \text{on }\partial\B_2.
$$
As $J_1'(x)=J_0(x)-J_1(x)/x$ for any $x>0$, we deduce that $J_1'(j_{1,1})=J_0(j_{1,1})<0$. From Remark~\ref{rk:usefulComp}, one gets
$$
\begin{pmatrix}
u_1'\\ u_2'
\end{pmatrix}=\frac{J_0(j_{1,1})}{|J_0(j_{1,1})|}\cdot \frac{j_{1,1}}{\sqrt{\pi}}\begin{pmatrix}
-\sin \theta\\ \cos \theta
\end{pmatrix} \varphi(\theta) =
-\frac{j_{1,1}}{\sqrt{\pi}}\begin{pmatrix}
-\sin \theta\\ \cos \theta
\end{pmatrix} \varphi(\theta)\quad\text{on }\partial\B_2.
$$
Using that $u_1'=-\partial \psi_{\Phi}'/\partial y$ and $u_2'=\partial \psi_{\Phi}'/\partial x$, it follows that
\begin{eqnarray*}
u_1'&=&\left.-\sin\theta \frac{\partial \psi_{\Phi}'}{\partial r}-\cos\theta \frac{\partial \psi_{\Phi}'}{\partial \theta}\right|_{r=1} = \sin\theta \,
\frac{j_{1,1}}{\sqrt{\pi}}\, \varphi(\theta) \\
u_2'&=&\left.\cos\theta \frac{\partial \psi_{\Phi}'}{\partial r}-\sin\theta \frac{\partial \psi_{\Phi}'}{\partial \theta}\right|_{r=1} =- \cos\theta \,
\frac{j_{1,1}}{\sqrt{\pi}}\, \varphi(\theta) \\
\end{eqnarray*}
From these two equations, we immediately obtain
$$
\frac{\partial \psi_{\Phi}'}{\partial r} = - \frac{j_{1,1}}{\sqrt{\pi}}\, \varphi(\theta) \qquad \text{and}\qquad \frac{\partial \psi_{\Phi}'}{\partial \theta}  = 0.
$$
Now since
$$ \frac{\partial \psi_{\Phi}'}{\partial r} = \sum_{n=0}^{+\infty}  j_{1,1}J_n'(j_{1,1})(a_n\cos (n\theta)+b_n\sin (n\theta))+n (c_n\cos (n\theta)+d_n\sin (n\theta))$$
and
$$\frac{\partial \psi_{\Phi}'}{\partial \theta}= \sum_{n=0}^{+\infty} n J_n(j_{1,1})(b_n\cos (n\theta)-a_n\sin (n\theta))+n
(d_n\cos (n\theta)-c_n\sin (n\theta))$$
we obtain, by identification the four relations
\begin{eqnarray}
j_{1,1} J_n'(j_{1,1}) a_n +n c_n= - \frac{j_{1,1}}{\sqrt{\pi}} \alpha_n \\
j_{1,1} J_n'(j_{1,1}) b_n +n d_n= - \frac{j_{1,1}}{\sqrt{\pi}} \beta_n \\
J_n(j_{1,1}) b_n + d_n =0\\
J_n(j_{1,1}) a_n + c_n =0
\end{eqnarray}
We obtain the desired expression by solving the two systems in $a_n,c_n$ and $b_n,d_n$ respectively.
\end{proof}}

\begin{proof}[Proof of Lemma \ref{Le:Coercivity}]
Let us compute $I$: we have
\begin{eqnarray*}
I[\u_\Phi'] &=& \int_{\partial\B_2}\left(u_1'\frac{\partial u_1'}{\partial \nu}+u_2'\frac{\partial u_2'}{\partial \nu}\right)\, d\sigma\\
&=& -\frac{j_{1,1}}{\sqrt{\pi}}\int_{\partial\B_2}\varphi \sin \theta\left[\sin\theta\frac{\partial^2\psi_\Phi'}{\partial r^2}-\cos \theta\frac{\partial\psi_\Phi'}{\partial \theta}+\cos \theta\frac{\partial ^2\psi_\Phi'}{\partial r\partial \theta} \right]\, d\sigma\\
&& +\frac{j_{1,1}}{\sqrt{\pi}}\int_{\partial\B_2}\varphi\cos\theta \left[-\cos\theta\frac{\partial^2\psi_\Phi'}{\partial r^2}-\sin \theta\frac{\partial\psi_\Phi'}{\partial \theta}+\sin \theta\frac{\partial ^2\psi_\Phi'}{\partial r\partial \theta} \right]\, d\sigma\\
&=& \frac{j_{1,1}}{\sqrt{\pi}}\int_{\partial\B_2}\varphi \frac{\partial^2\psi_\Phi'}{\partial r^2}.
\end{eqnarray*}
Using the expansion \eqref{expr:psi0844} of $\psi$, we get
$$
\left.\frac{\partial^2\psi_\Phi'}{\partial r^2}\right|_{\partial\B_2}=\sum_{n=0}^{+\infty}j_{1,1}^2(a_n\cos \theta+b_n\sin\theta)J_n''(j_{1,1})+\sum_{n=1}^{+\infty}n(n-1)(c_n\cos (n\theta)+d_n\sin(n\theta)),
$$
and we thus obtain
\begin{eqnarray*}
I[\u_\Phi'] &=& -\frac{j_{1,1}}{\sqrt{\pi}}\left[\pi j_{1,1}^2\sum_{n=1}^{+\infty}(a_n\alpha_n+b_n\beta_n)J_n''(j_{1,1})+\pi \sum_{n=1}^{+\infty}n(n-1)(c_n\alpha_n+d_n\beta_n)\right]\\
&=& -j_{1,1}^2\sum_{n=1}^{+\infty}\frac{j_{1,1}^2J_n''(j_{1,1})-n(n-1)J_n(j_{1,1})}{nJ_n(j_{1,1})-j_{1,1}J_n'(j_{1,1})}(\alpha_n^2+\beta_n^2).
\end{eqnarray*}

Now, by definition of Bessel functions of the first order, one has 
$$
J_n''(x)+\frac{1}{x}J_n'(x)+\left(1-\frac{n^2}{x^2}\right)J_n(x)=0
$$ 
for every $x>0$. We infer that $j_{1,1}^2J_n''(j_{1,1})-n(n-1)J_n(j_{1,1})=(n-j_{1,1}^2)J_n(j_{1,1})-j_{1,1}J_n'(j_{1,1})$, yielding finally
\begin{equation}\label{expr:I:0924}
I=-j_{1,1}^2\sum_{n=1}^{+\infty}\frac{(n-j_{1,1}^2)J_n(j_{1,1})-j_{1,1}J_n'(j_{1,1})}{nJ_n(j_{1,1})-j_{1,1}J_n'(j_{1,1})}(\alpha_n^2+\beta_n^2).
\end{equation}
Recall that we assume $\varphi\in\langle 1\rangle^\perp$ where $1$ is the constant function and  $\langle 1\rangle^\perp$ is its $L^2$ orthogonal subspace.

According to \eqref{expr:I:0924}, one has
\begin{eqnarray*}
\langle d^2\mathcal F(\B_2)\Phi,\Phi\rangle &=& 2\pi I[\u_\Phi']+2\lambda_1(\B_2) \int_{\partial\B_2}\varphi^2-\frac{2\lambda_1(\B_2)}{\pi}\left(\int_{\partial\B_2}\varphi\right)^2\\
&=& 2\pi j_{1,1}^2\sum_{n=1}^{+\infty}\frac{nJ_n(j_{1,1})-j_{1,1}J_n'(j_{1,1})-(n-j_{1,1}^2)J_n(j_{1,1})-j_{1,1}J_n'(j_{1,1})}{nJ_n(j_{1,1})+j_{1,1}J_n'(j_{1,1})}(\alpha_n^2+\beta_n^2).
\end{eqnarray*}
Furthermore, recall that $xJ_n'(x)=nJ_n(x)-xJ_{n+1}(x)$ for every $x\geq 0$, so that $j_{1,1}J_n'(j_{1,1})-nJ_n(j_{1,1})=-j_{1,1}J_{n+1}(j_{1,1})$.

This leads to
\begin{equation}
\langle d^2\mathcal F(\B_2)\Phi,\Phi\rangle =2\pi j_{1,1}^4\sum_{n=2}^{+\infty} \gamma_n(\alpha_n^2+\beta_n^2)\qquad \text{with }\gamma_n=\frac{J_n(j_{1,1})}{j_{1,1}J_{n+1}(j_{1,1})}.
\end{equation}
(we recall that we have supposed $\varphi$ orthogonal to $1,\cos\theta,\sin\theta$, so $a_0=a_1=b_1=0$.)
Since the first positive zero of $J_n$ is greater than $j_{1,1}$ for all $n\geq 2$, one has $\gamma_n>0$ for every $n\geq 2$. 
Furthermore, using that $J_n(x)\sim \frac{1}{n!}\left(\frac{x}{2}\right)^n$ as $n\to +\infty$, we infer that 
$$
\gamma_n \sim 2(n+1)/j_{1,1}^2\quad { as }\quad n\to +\infty.
$$
Since the $H^{1/2}$ norm is given by
$$\|\varphi\|_{H^{1/2}}=\sum_{n=2}^\infty n(a_n^2+b_n^2)$$
the conclusion follows.
\end{proof}

\section{Necessary optimality conditions: proof of Theorem \ref{Th:Necessary}}\label{secproof:necessaryOptCond}
As alluded to in the introduction, Theorem \ref{Th:NonOptimality3d} follows from Theorem \ref{Th:Necessary}, so that it is clearer to begin with the proof of the latter (nevertheless, we give a proof of {Theorem~\ref{Th:NonOptimality3d}} in
section \ref{sec:proofTheo3D} that only relies on the computations of the semi-differential analysed in the present section). 
This theorem relies on several results on the semi-differentiability of eigenvalues. To write things down in a precise way, we consider a domain $\O$ whose boundary is of class $\mathscr C^{2,\alpha}$ with $\alpha\in (0,1)$. The solution $\u$ thus belongs to $C^{2,\alpha}(\overline{\O})$ by standard elliptic regularity.  
Although $\lambda_1(\O)$ may not be simple, it has finite multiplicity, say $N$, and the map $\mathcal F$ is semi-shape differentiable at $\O$  \cite[Theorem 2.1, Chapter 1]{DelfourZolesio} (see also \cite[Theorem~2.1 and Remark~2.2]{Caubet_2021} and \cite{zbMATH01167545}), in the sense that, for any vector field $\Phi\in \mathscr X$, the limit
\[\langle \partial\mathcal F(\O),\Phi\rangle:=\underset{t\searrow 0}\lim\frac{\mathcal F((\mathrm{Id}+t\Phi)\O)-\mathcal F(\O)}t\] exists. This fact relies on the semi-differentiability of $\lambda_1$.  By  \cite[Theorem 2.1, Chapter 1]{DelfourZolesio}, if we let $E_1$ denote the first eigenspace associated with $\lambda_1(\O)$, there holds 
\[ \langle \partial \lambda_1(\O),\Phi\rangle=-\min_{\u \in E_1\,, \int_\O \Vert \u\Vert^2=1}\int_{\partial \O} \Vert (\n\u)\nu\Vert^2\langle \Phi,\nu\rangle.\]
Let $\{\u_1,\dots,\u_N\}$ be an orthonormal basis of $E_1$. Using the differentiability of the volume, we deduce that 
\begin{align*}
\langle \partial \mathcal F(\O),\Phi\rangle&=\frac23\lambda_1(\O)|\O|^{-\frac13}\int_{\partial \O} \langle \Phi,\nu\rangle-|\O|^{\frac23}\min_{\u \in E_1\,, \int_\O \Vert \u\Vert^2=1}\int_{\partial \O} \Vert (\n\u)\nu\Vert^2\langle \Phi,\nu\rangle
\\&=\min_{\u \in E_1\,, \int_\O \Vert \u\Vert^2=1}\int_{\partial \O} \left(\frac23\lambda_1(\O)|\O|^{-\frac13}-|\O|^{\frac23}\cdot\Vert (\n\u)\nu\Vert^2\right)\langle \Phi,\nu\rangle
\\&=\min_{(\alpha_i)_{i=1,\dots,N}, \ \sum_{i=1}^{N}{\alpha_i^2}=1}\sum_{i,j=1}^{N}\int_{\partial \O} \left(\frac23\lambda_1(\O)|\O|^{-\frac13}\alpha_i^2-|\O|^{\frac23} \alpha_i\alpha_j\langle (\n\u_i)\nu,(\n\u_j)\nu\rangle\right)\langle \Phi,\nu\rangle.
\end{align*}
Let 
\[ M_\Phi:=\frac23\lambda_1(\O)|\O|^{-\frac13}\int_{\partial \O}\langle \Phi,\nu\rangle\mathrm{I}_3-|\O|^{\frac23} \left(\int_{\partial \O}\langle (\n\u_i)\nu,(\n\u_j)\nu\rangle\langle\Phi,\nu\rangle
\right)_{1\leq i,j\leq N}.\] $I_3$ is the $3\times 3$ identity matrix. $M_\Phi$ is a real, symmetric matrix, and is thus diagonalisable. Let $\Sigma_\Phi$ be its spectrum. If $\O^*$ is optimal, then the optimality conditions read:
\[ \forall \Phi\in \mathscr X\,, \Sigma_\Phi\subset [0;+\infty).\] However, as $\Sigma_{-\Phi}=-\Sigma_\Phi$, we deduce that, if $\O$ is optimal then
\[\forall \Phi\in \mathscr X\,, \Sigma_\Phi=\{0\}.\]
As $M_\Phi$ is symmetric, we deduce that, for any $\Phi$, there holds
\[ M_\Phi=0.\] 
We thus obtain the following fact: if $\O^*$ is optimal then for any $(i,j)\in \{1,\dots,N\}^2$, for any $x\in \partial \O^*\color{black}$,
\[ \langle (\n \u_i)\nu,(\n \u_j)\nu\rangle=\frac23\cdot\frac{\lambda_1(\O^*)}{|\O^*|}\delta_{i,j}=\frac1{\sigma_{\O^*}^2}\delta_{i,j}.\] In this expression, $\delta_{i,j}$ is the usual Kronecker symbol and ${\sigma_{\O^*}^2}=\frac32\cdot\frac{|\O^*|}{\lambda_1(\O^*)}$. 

However, since each $\u_i$ is divergence free we have $\langle (\n \u_i)\nu,\nu\rangle=0$ (see Remark \ref{Re:Tangential}). In particular, the multiplicity of the first eigenspace $E_{1}$ is at most 2 (we recall that we work in dimension 3).

Now, let us argue by contradiction and assume that $\lambda_1(\O^*)$ has multiplicity 2. Let $(\u_1,\u_2)$ be an orthonormal basis of $E_1$. From the previous computation, we deduce that, for any $x\in \partial \O^*$, $\{\sigma_{\O^*}(\n \u_1)\nu,\sigma_{\O^*}(\n \u_2)\nu\}$ is an orthonormal basis of the tangent plane $T_x(\partial \O)$. Let us now argue using a scheme of proof inspired by \cite[Proof of Theorem 2]{Gerner_2023} to reach a contradiction. 

For $i=1,2$, let $\v_i$ be the solution of
\begin{equation}\label{Eq:vi}
\begin{cases}-\Delta \v_i+\n p_i=\lambda_1(\O^*)\v_i+\omega \u_i&\text{ in }{\O^*}\,, 
\\ \n\cdot \v_i=0&\text{ in }{\O^*}\,, 
\\ \v_i\in E_{1}^\perp\,, 
\\ \v_i=-(\n \u_i)\nu&\text{ on }\partial {\O^*},\end{cases}\end{equation}
where  $E_1^\perp$ is the $L^2$-orthogonal subspace to $E_1$ and {the constant $\omega$ is defined by}
\begin{equation}\label{Eq:Omega} 
\omega=-\int_{\partial \O^*}\Vert (\n \u_i)\nu\Vert^2=-\frac23\mathrm{Per}(\partial \O^*)\cdot\frac{\lambda_1(\O^*)}{|\O^*|}.\end{equation}
We have the following result:
\begin{lemma}\label{Le:ViWd}
For $i=1,2$, $\v_i$ is uniquely defined.\end{lemma}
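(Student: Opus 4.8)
The plan is to recognize that the system \eqref{Eq:vi} is a linear Stokes-type problem with a solvability constraint (a Fredholm alternative), and that the prescribed right-hand side and boundary data have been engineered precisely so that the compatibility condition holds and the solution is then unique in $E_1^\perp$.

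First I would set up the correct functional framework. Let $\mathcal{H}$ be the space of divergence-free $W^{1,2}_0(\O^*;\R^3)$ vector fields, and consider the Stokes operator $A = -\Delta + \n p$ (the Leray projection of $-\Delta$) on $\mathcal{H}$. The value $\lambda_1(\O^*)$ is an eigenvalue of $A$ with eigenspace $E_1$; since $A$ is self-adjoint with compact resolvent, $A - \lambda_1(\O^*)\mathrm{Id}$ is a Fredholm operator of index $0$ on $\mathcal{H}$, with kernel and cokernel both equal to $E_1$. The issue is that \eqref{Eq:vi} has a nonzero Dirichlet datum $\v_i = -(\n\u_i)\nu$ on $\partial\O^*$, so I would first lift it: pick any divergence-free $\mathscr{C}^{2,\alpha}$ extension $\mathbf{g}_i$ of $-(\n\u_i)\nu$ into $\O^*$ (this is possible since the boundary datum is tangential, hence has zero flux through $\partial\O^*$ by Remark \ref{Re:Tangential}, so the divergence-free extension exists by the standard Bogovskii/Helmholtz construction). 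Writing $\v_i = \mathbf{g}_i + \mathbf{z}_i$ with $\mathbf{z}_i \in \mathcal{H}$, the problem for $\mathbf{z}_i$ becomes $(A - \lambda_1(\O^*))\mathbf{z}_i = \omega\u_i + (\Delta\mathbf{g}_i - \n q_i) + \lambda_1(\O^*)\mathbf{g}_i =: \mathbf{F}_i$ in $\mathcal{H}$ (after Leray projection), together with the constraint $\v_i \in E_1^\perp$.

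The key step is the compatibility condition: by the Fredholm alternative, $(A-\lambda_1(\O^*))\mathbf{z}_i = \mathbf{F}_i$ is solvable in $\mathcal{H}$ if and only if $\mathbf{F}_i \perp E_1$, i.e. $\langle \mathbf{F}_i, \u_j\rangle_{L^2} = 0$ for $j = 1, 2$. I would verify this by integration by parts: testing the equation $-\Delta\v_i + \n p_i = \lambda_1(\O^*)\v_i + \omega\u_i$ against $\u_j$ and using that $\u_j = 0$ on $\partial\O^*$, $\n\cdot\u_j = 0$, and $-\Delta\u_j + \n p_j = \lambda_1(\O^*)\u_j$, one finds that the interior terms cancel and one is left with $\omega\langle\u_i,\u_j\rangle_{L^2} = -\int_{\partial\O^*}\langle(\n\u_j)\nu, \v_i\rangle = \int_{\partial\O^*}\langle(\n\u_j)\nu,(\n\u_i)\nu\rangle$. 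By the orthonormality relation established just above the lemma, $\int_{\partial\O^*}\langle(\n\u_i)\nu,(\n\u_j)\nu\rangle = \frac{2}{3}\frac{\lambda_1(\O^*)}{|\O^*|}\operatorname{Per}(\partial\O^*)\,\delta_{ij}$, while $\langle\u_i,\u_j\rangle_{L^2} = \delta_{ij}$; this is exactly the identity $\omega\delta_{ij} = -\frac{2}{3}\operatorname{Per}(\partial\O^*)\frac{\lambda_1(\O^*)}{|\O^*|}\delta_{ij}$ forced by the definition \eqref{Eq:Omega} of $\omega$. Hence the compatibility condition holds for every $i,j$, so a solution exists; the general solution differs by an element of $E_1$, and imposing $\v_i \in E_1^\perp$ pins down that element uniquely. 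Standard elliptic regularity for the Stokes system then upgrades $\v_i$ to a $\mathscr{C}^{2,\alpha}(\overline{\O^*})$ function, and the pressure $p_i$ is determined up to an additive constant.

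The main obstacle is getting the sign/value bookkeeping in the compatibility computation exactly right — in particular tracking boundary terms through the integration by parts and making sure the precise constant $\omega$ in \eqref{Eq:Omega} is the one that makes $\langle\mathbf{F}_i,\u_j\rangle = 0$ hold, rather than merely making it hold for $i=j$. (This is why $\omega$ was defined through the $L^2$ norm of $(\n\u_i)\nu$, which is independent of $i$ and equals $\frac{2}{3}\operatorname{Per}\cdot\lambda_1/|\O^*|$ at an optimizer by the derived optimality relations; one should note in passing that this common value is what makes the definition of $\omega$ unambiguous.) A secondary technical point is justifying the existence of the divergence-free $\mathscr{C}^{2,\alpha}$ extension $\mathbf{g}_i$ of the tangential boundary field, but this is classical given the $\mathscr{C}^{2,\alpha}$ regularity of $\partial\O^*$.
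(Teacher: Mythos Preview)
Your approach is correct and is essentially the same as the paper's: both rely on the Fredholm alternative for the Stokes operator at the eigenvalue $\lambda_1(\O^*)$, with the crux being that the boundary datum $-(\n\u_i)\nu$ and forcing $\omega\u_i$ are compatible with the kernel $E_1$ precisely because of the pointwise orthogonality relations $\langle(\n\u_i)\nu,(\n\u_j)\nu\rangle=\mathrm{const}\cdot\delta_{ij}$ established just above. The paper packages the existence step variationally---minimising $\mathcal E(\v)=\tfrac12\int\|\n\v\|^2-\tfrac{\lambda_1}{2}\int\|\v\|^2$ on the affine space with the prescribed boundary data intersected with $E_1^\perp$, and then identifying the Lagrange multipliers---rather than lifting the boundary data and checking compatibility directly, but this is a cosmetic difference and the key integration-by-parts computation is identical. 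One small slip: your compatibility identity should read $\omega\delta_{ij}=-\int_{\partial\O^*}\langle(\n\u_i)\nu,(\n\u_j)\nu\rangle$ (with a minus sign), which is exactly the hazard you flagged.
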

\begin{proof}[Proof of Lemma \ref{Le:ViWd}]
The uniqueness of $\v_i$ follows from the Fredholm alternative. Regarding the existence of $\v_i$, it is clear that the energy functional 
\[ \mathcal E:W^{1,2}_0(\O;\R^3)\color{black}\ni \v\mapsto \frac12\int_{\O^*} \Vert \n \v\Vert^2-\frac{\lambda_1(\O^*)}{2}\int_{\O^*}\Vert \v\Vert^2\] is coercive on 
\[ X_i:=\{\v \in W^{1,2}_0(\O;\R^3)\color{black}\,, \n\cdot \v=0\,, \v=-(\n \u_i)\nu\text{ on }\partial {\O^*}\}\cap E_{1}^\perp.\]
We let $\v_i$ be a minimiser of $\mathcal E$ on $X_i$. In particular, from the Euler-Lagrange equation we deduce that 
\[ -\Delta \v_i-\lambda_1(\O^*)\v_i\in \left(\{\v \in W^{1,2}_0(\O;\R^3)\color{black}\,, \n\cdot \v=0\}\cap E_{1}^\perp\right).\]
Thus, there exist a function $p_i$ and two coefficients $(\alpha_{i,j})_{j=1,2}$ such that 
\[ -\Delta \v_i-\lambda_1(\O^*)\v_i=-\n p_i+\sum_{j=1,2}\alpha_{i,j}\u_j.\] Multiplying by $\u_j$, using the fact that $(\n \u_i)\nu$ and $(\n \u_j)\nu$ are orthogonal on $\partial \O^*$, we deduce that 
\[ \alpha_{i,j}=\omega \delta_{i,j}\] where $\omega$ is defined by \eqref{Eq:Omega}.
\end{proof}

{Turning back to the proof of Theorem \ref{Th:Necessary}, let us write $\v_i=(v_{i,k})_{1\leq k\leq 3}$ for $i=1,2$ and consider the scalar function $\langle \v_1,\v_2\rangle$ defined by $\langle \v_1,\v_2\rangle(\cdot)=\sum_{k=1}^3v_{1,k}(\cdot)v_{2,k}(\cdot)$.}

{Then, denoting by $\n (\langle \v_1,\v_2\rangle)$ the gradient of $ \langle \v_1,\v_2\rangle$ and still retaining, with a slight abuse of notations, the notation $\n \v_i$ for the Jacobian matrix of $\v_i$, observe that }
\[ \n (\langle \v_1,\v_2\rangle)=(\n \v_1)\v_2+(\n \v_2)\v_1.\] Since $\v_1$ and $\v_2$ are orthogonal on $\partial \O^*$, we deduce that, letting $X^\tau$ denote the tangential part of a vector field $X$ on $\partial \O^*$, there holds
\[\left((\n \v_1)\v_2\right)^\tau=-\left((\n \v_2)\v_1\right)^\tau.\]

Furthermore, since $\n \langle \v_1,\v_2\rangle$ is colinear to $\nu$, and since $\v_1$ is orthogonal to $\nu$ there holds
\[ 0=\langle \v_1, \n\langle \v_1,\v_2\rangle\rangle.\]
Developing the gradient yields
\begin{align*}
0&=\langle \v_1, \n\langle \v_1,\v_2\rangle\rangle.=\langle \v_1, (\n \v_1)\v_2\rangle+\langle \v_1,(\n \v_2)\v_1\rangle
\\&=\langle \v_1,(\n \v_2)\v_1\rangle+\sum_{i,j} \v_{1,i}\v_{2,j}\frac{\partial \v_{1,i}}{\partial x_j}=\langle \v_1,(\n \v_2)\v_1\rangle+\frac12\sum_{i,j} \v_{2,j}\frac{\partial \v_{1,i}^2}{\partial x_j}
\\&=\langle \v_1,(\n \v_2)\v_1\rangle
\end{align*}
a.e. on $\partial\O$, where we used once again the optimality condition to write that $\Vert \v_1\Vert^2$ is constant on $\partial \O^*$, and the fact that $\v_2$ is tangential.

Similarly, 
\[ 0=\langle \v_2,(\n \v_1)\v_2\rangle=-\langle \v_2,(\n \v_2)\v_1\rangle.\] Here again the fact that $\v_2$ is tangential is instrumental. Thus $(\n \v_2)\v_1$ is orthogonal to both $\v_1$ and $\v_2$, so that there exists a function $f:\partial\O\to \R$ satisfying
\[
{(\n \v_2(x))\v_1(x)=f(x) \nu(x), \qquad x\in \partial\O.}
\] 
For the same reason, 
\[
{ (\n \v_1(x))\v_2(x)=g(x)\nu(x), \qquad \text{a.e. }x\in \partial\O}
\] for some function $g$. Finally, letting $[X,Y]$ be the Lie bracket of two vector fields,
\[ 
{[\v_1,\v_2]=(\n \v_1)\v_2-(\n \v_2)\v_1=(f-g)\nu\qquad \text{on }\partial\O.}
\] 
However, as $\v_1$ and $\v_2$ are both tangential, so is their Lie bracket, whence $[\v_1,\v_2]=0$. In particular, this implies that $\partial \O^*$ admits a set of two pointwise orthogonal, commuting vector fields. This implies that the Riemann tensor associated with  $\partial \O^*$  endowed with the induced metric is zero (we refer for instance to \cite[ Theorem 3.1.7]{Kunzinger} or \cite[Theorem 7.3 and its proof]{zbMATH01077335}). In particular, the Gau\ss\, curvature of $\partial \O^*$ is zero everywhere, which is impossible.
%
%
%

\section{The ball does not minimise $F$ in dimension 3: proof of Theorem~\ref{Th:NonOptimality3d}}\label{sec:proofTheo3D}
As in the two-dimensional case, we need to describe precisely the eigenspace associated with $\lambda_1(\B_3)$. The description of this first eigenvalue is less standard and mostly due to Saks \cite{Saks_2013}. However, \cite{Saks_2013} is quite an involved article and, for the sake of convenience, we recall the main steps of \cite{Saks_2013} that we use to derive the results of this preliminary section in Appendix \ref{Ap:Saks}.

Let $\omega>0$ be the first positive root of the equation $\tan(x)=x$. 
The main result is the following:
\begin{proposition}\label{Pr:Saks}
$\lambda_1(\B_3)$ {is equal to $\omega^2$ and} has multiplicity 3. An orthogonal basis of the associated eigenspace $E_1$ is given by 
\[
\left\{\U_1=\psi(r)\begin{pmatrix}0\\z\\-y\end{pmatrix}\,,\U_2=\psi(r)\begin{pmatrix}-z\\0\\x\end{pmatrix}\,, \U_3=\psi(r)\begin{pmatrix}-y\\x\\0\end{pmatrix}\right\}\] where 
\[\psi(r)=\frac{\sin (\omega r)}{\omega^2r^3}-\frac{\cos (\omega r)}{\omega r^2}.
\]
Finally, for any $i\in\{1,2,3\}$, there holds
{\[ \int_{\B_3}\Vert \U_i\Vert^2=\frac{4\pi}{3\omega^4}\left(\omega^2-\sin^2\omega\right)=\frac{4\pi}{3}\cos^2\omega=:A^2.\]}
\end{proposition}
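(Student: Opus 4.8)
The plan is to establish Proposition~\ref{Pr:Saks} in three stages: first, identify the eigenvalue and eigenspace; second, verify that the listed fields $\U_1,\U_2,\U_3$ are indeed Stokes eigenfunctions at the eigenvalue $\omega^2$; and third, compute the common normalisation constant $A^2$. The structural input—namely that the first eigenvalue of the Dirichlet--Stokes operator in $\B_3$ has multiplicity $3$ with eigenspace spanned by these three explicit fields—I would import from the analysis of Saks recalled in Appendix~\ref{Ap:Saks}; the point of this proposition is to package those facts in a clean, self-contained form, together with the explicit $L^2$ mass, which is what the subsequent optimality computations actually use.

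For the verification step, I would proceed as follows. Observe that each $\U_i$ has the form $\psi(r)\,(\mathbf{e}_i\times \mathbf{x})$ for the standard basis vectors $\mathbf{e}_i$; equivalently $\U_i = \curl\bigl(g(r)\mathbf{e}_i\bigr)$ for a suitable radial profile $g$ related to $\psi$, which makes $\n\cdot\U_i=0$ automatic. To check the eigen-equation, note that $-\Delta\U_i - \omega^2\U_i$ must be a gradient: one computes $-\Delta\U_i$ explicitly using that $\U_i$ is a curl of a radial-times-constant field, reducing everything to an ODE for $\psi$. The defining relation $\tan\omega=\omega$ is exactly the condition that makes the Dirichlet boundary condition $\U_i=0$ on $\partial\B_3=\{r=1\}$ compatible with $\psi$ being the ``interior'' solution: indeed $\psi(1)=\frac{\sin\omega}{\omega^2}-\frac{\cos\omega}{\omega}=\frac{\sin\omega-\omega\cos\omega}{\omega^2}=0$ precisely when $\tan\omega=\omega$. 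So the boundary condition forces $\omega$ to be the first positive root of $\tan x = x$, and minimality (that this is genuinely $\lambda_1$ and not a higher eigenvalue) is the part I would cite from Saks rather than reprove. Orthogonality of $\U_1,\U_2,\U_3$ in $L^2(\B_3;\R^3)$ follows from a short symmetry/parity computation: $\int_{\B_3}\psi(r)^2\langle \mathbf{e}_i\times\mathbf{x},\mathbf{e}_j\times\mathbf{x}\rangle\,dx$ vanishes for $i\neq j$ by oddness in the remaining coordinate, and for $i=j$ reduces to a radial integral.

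For the normalisation, I would compute $\int_{\B_3}\Vert\U_1\Vert^2 = \int_{\B_3}\psi(r)^2(y^2+z^2)\,dx$. Passing to spherical coordinates, $\int_{S^2}(y^2+z^2)\,d\sigma = \frac{2}{3}\cdot 4\pi \cdot (\text{radial factor})$ after accounting for $y^2+z^2 = r^2\sin^2\phi\cdot(\dots)$; more cleanly, by symmetry the angular average of $(y^2+z^2)$ over the sphere of radius $r$ is $\frac{2}{3}r^2$, giving $\int_{\B_3}\Vert\U_1\Vert^2 = \frac{2}{3}\cdot 4\pi\int_0^1 \psi(r)^2 r^4\,dr$. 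It then remains to evaluate $\int_0^1\bigl(\frac{\sin\omega r}{\omega^2 r^3}-\frac{\cos\omega r}{\omega r^2}\bigr)^2 r^4\,dr = \frac{1}{\omega^4}\int_0^1\bigl(\frac{\sin\omega r}{r}-\omega\cos\omega r\bigr)^2\,dr$, expand the square, use $\frac{d}{dr}\bigl(\frac{\sin\omega r}{r}\bigr)=\frac{\omega\cos\omega r}{r}-\frac{\sin\omega r}{r^2}$ and standard integrals of $\sin^2,\cos^2,\sin\cos$, and finally invoke $\tan\omega=\omega$ to collapse the result to $\frac{1}{\omega^4}(\omega^2-\sin^2\omega)$, hence $\frac{4\pi}{3\omega^4}(\omega^2-\sin^2\omega)$; the identity $\omega^2-\sin^2\omega = \omega^2\cos^2\omega$ (again from $\tan\omega=\omega$, i.e. $\sin\omega=\omega\cos\omega$) gives the alternative form $\frac{4\pi}{3}\cos^2\omega$. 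The main obstacle is purely bookkeeping: carrying out the radial integral cleanly and making sure every use of the relation $\tan\omega=\omega$ is applied consistently, together with honestly citing (rather than reproving) from Appendix~\ref{Ap:Saks} the nontrivial spectral fact that $\omega^2$ is the \emph{first} eigenvalue and that the eigenspace is exactly three-dimensional.
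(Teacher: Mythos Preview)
Your proposal is correct and aligns with the paper's own treatment: the paper likewise defers the hard spectral facts (that $\lambda_1(\B_3)=\omega^2$ and that the eigenspace is exactly three-dimensional) to Appendix~\ref{Ap:Saks}, and computes the normalisation $A^2$ by the same spherical-coordinates reduction you outline, collapsing the radial integral via $\omega\cos\omega=\sin\omega$.

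The one place your route differs is the \emph{verification} that the $\U_i$ are eigenfunctions. You check this directly: write $\U_i=\psi(r)(\mathbf e_i\times\mathbf x)=\curl(g(r)\mathbf e_i)$, so $\n\cdot\U_i=0$ is automatic, and reduce $-\Delta\U_i=\omega^2\U_i$ (with constant pressure) to the radial ODE $\psi''+\tfrac4r\psi'+\omega^2\psi=0$, with $\psi(1)=0$ forcing $\tan\omega=\omega$. The paper's appendix instead builds the Stokes eigenfunctions from the $\curl$ spectral problem: it constructs explicit $\curl$-eigenfunctions $\u_{i,\pm}$ at eigenvalue $\pm\omega$, shows that any Stokes eigenfunction at $\omega^2$ decomposes along these, and identifies the Dirichlet combinations $\u_{i,+}-\u_{i,-}$ with the $\U_i$. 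Your direct check is more economical for confirming that the $\U_i$ \emph{are} eigenfunctions; the paper's curl-factorisation is what actually proves the eigenspace is \emph{no larger} than three-dimensional and that $\omega^2$ is genuinely the lowest eigenvalue—precisely the structural facts you are (correctly and honestly) citing rather than reproving.
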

As mentioned, we prove this multiplicity result in Appendix \ref{Ap:Saks}.
 In particular, applying Theorem \ref{Th:Necessary}, the conclusion follows.

\medskip

Let us now give another proof of Theorem~\ref{Th:NonOptimality3d} that only relies on the analysis of the semi-differential as given in section~\ref{secproof:necessaryOptCond}. Recall that the semi-differential of $\lambda_1$ at $\B_3\color{black}$ in direction $\Phi \in  \mathscr X$ (see Lemma~\ref{Le:Differentiability2D} for the definition of $\mathscr X$) reads
\[\langle \partial\lambda_1(\B_3), \Phi\rangle =\min_{\substack{\mathbf{U}\in \operatorname{Span}\{\mathbf{U}_i/A\}_{1\leq i\leq 3}\\ \Vert \mathbf{U}\Vert_{L^2(\B_3)}=1}}-\int_{\partial \B_3}\Vert\nabla \mathbf{U}\nu\Vert^2\langle \Phi,\nu\rangle .
\]
It follows that
\begin{eqnarray*}
\langle \partial\lambda_1(\B_3), \Phi\rangle &=& \min_{\substack{(\alpha_i)_{1\leq i\leq 3}\in \R^3\\ \alpha_1^2+\alpha_2^2+\alpha_3^2=1}}-\frac{1}{A^2}\int_{\partial \B_3}\left\Vert\sum_{i=1}^3\alpha_i \nabla \mathbf{U}_i\nu\right\Vert^2\langle \Phi,\nu\rangle\\
&=& \frac{1}{A^2}\mu_1\left(\mathcal{M}_{\partial \B_3}(\langle \Phi,\nu\rangle)\right),
\end{eqnarray*}
where $\mu_1\left(\mathcal{M}_{\partial \B_3}(\langle\Phi, \nu\rangle)\right)$ denotes the lowest eigenvalue of the matrix
\[
\mathcal{M}_{\partial \B_3}(\langle\Phi, \nu\rangle)=\left(-\int_{\partial \B_3}\left(\nabla \mathbf{U}_i\nu\cdot \nabla \mathbf{U}_j\nu\right) \langle\Phi, \nu\rangle\right)_{1\leq i,j\leq 3}.
\] 
From the explicit expressions for the vector fields $\mathbf{U}_i$ provided in Proposition~\ref{Pr:Saks}, one computes successively 
{\[
\nabla \mathbf{U}_1\nu=\begin{pmatrix}
0 \\ z\psi'(1)\\ -y\psi'(1)
\end{pmatrix}, \quad 
\nabla \mathbf{U}_2\nu=\begin{pmatrix}
-z\psi'(1)\\ 0\\ x\psi'(1)
\end{pmatrix},\quad 
\nabla \mathbf{U}_3\nu=\begin{pmatrix}
-y\psi'(1)\\ x\psi'(1)\\ 0
\end{pmatrix}
\]}
and
$$
{\mathcal{M}_{\partial \B_3}(\langle\Phi, \nu\rangle)=\psi'(1)^2\begin{pmatrix}
-\int_{\partial \B_3}(y^2+z^2)\langle\Phi, \nu\rangle  & \int_{\partial \B_3}xy\langle\Phi, \nu\rangle  &- \int_{\partial \B_3}xz\langle\Phi, \nu\rangle \\
\int_{\partial \B_3}xy\langle\Phi, \nu\rangle  & -\int_{\partial \B_3}(x^2+z^2)\langle\Phi, \nu\rangle  & -\int_{\partial \B_3}yz\langle\Phi, \nu\rangle \\
-\int_{\partial \B_3}xz\langle\Phi, \nu\rangle  & -\int_{\partial \B_3}yz\langle\Phi, \nu\rangle  & -\int_{\partial \B_3}(x^2+y^2)\langle\Phi, \nu\rangle \\
\end{pmatrix}.}
$$
{Furthermore, one easily computes $\psi'(1)=\sin \omega \neq 0$.
Observe that $\mathcal{M}_{\partial \B_3}(\langle\Phi, \nu\rangle)$ is real symmetric. We infer from the previous computations that the semi-differential of $\mathcal F$, defined by \eqref{Eq:Pv}, at $\B_3$ in direction $\Phi$ is given by
\begin{eqnarray*}
\langle \partial\mathcal F(\B_3), \Phi\rangle &=& \frac23 \lambda_1(B_3)|\B_3|^{-1/3}\int_{\partial \B_3}\langle\Phi, \nu\rangle + \frac{|\B_3|^{2/3}}{A^2}\mu_1\left(\mathcal{M}_{\partial \B_3}(\langle\Phi, \nu\rangle)\right)
\end{eqnarray*}
Moreover, since
$$
\frac{|\B_3|}{A^2}=\frac{4\pi}{3}\cdot \frac{3\omega^2}{4\pi\sin^2\omega}=\frac{\omega^2}{\sin^2\omega},
$$
it follows that
\begin{eqnarray*}
\langle \partial\mathcal F(\B_3), \Phi\rangle &=& \omega^2 |\B_3|^{-1/3}\left(\frac23 \int_{\partial \B_3}\langle\Phi, \nu\rangle+\frac{1}{\sin ^2\omega }\mu_1\left(\mathcal{M}_{\partial \B_3}(\langle\Phi, \nu\rangle)\right)\right)\\
&=& \omega ^2|\B_3|^{-1/3}\mu_1\left(\widehat{M}_\Phi\right)
\end{eqnarray*}
where $\widehat{M}_\Phi$ denotes the $3\times 3$ matrix
\[
\widehat{M}_\Phi:=\frac23 \int_{\partial \B_3}\langle\Phi, \nu\rangle \operatorname{I}_3+\frac{1}{\sin ^2\omega}\mathcal{M}_{\partial \B_3}(\langle\Phi, \nu\rangle), 
\]
$\operatorname{I}_3$ is the identity matrix of size 3 and $\mu_1\left(\widehat{M}_\Phi\right)$ denotes the lowest eigenvalue of the matrix $\widehat{M}_\Phi$. }

{Observe that the trace of the matrix $\widehat{M}_\Phi$ is 
\[
\operatorname{Tr}\widehat{M}_\Phi=\left(2-2\frac{\sin^2\omega}{\sin^2\omega}\right)\int_{\partial \B_3}\langle\Phi, \nu\rangle=0,
\]
meaning, as expected, that the functional $\mathcal F$ is dilation invariant. }
Using this observation, we claim that the non optimality of $\B_3$ will be proven whenever one shows that $\widehat{M}_\Phi$ is not the null matrix. Indeed, in that case, since $\widehat{M}_\Phi$ is real symmetric, we will get that $\widehat{M}_\Phi$ has two eigenvalues with opposite sign, yielding the existence of a perturbation which strictly decreases the functional $\mathcal F$. It suffices for instance to find $\Phi$ such that
\[
\int_{\partial \B_3}yz\langle\Phi, \nu\rangle\neq 0.
\]
Note that $\nu=x\vec{e}_x+y\vec{e}_y+z\vec{e}_z$ on $\partial\B_3$, in cartesian coordinates. We choose $\Phi$ given by $\Phi(x,y,z)=z\vec{e}_y$ and we are done.

\section{Conclusion and open problems}
There are several questions that we believe are interesting, but would require further work and, most likely, the use of different tools. Let us give some that we believe are the most ambitious.
\paragraph{Characterising the optimal domain $\O^*$.}
{Although we have proved that, in $\R^2$, the ball $\B_2$ is likely to be an optimiser, it is not clear to us how one would approach such a result. As detailed in Remark~\ref{rk:dim1913}, if we can prove that there exists a simply connected and regular minimiser, then it is necessarily a disc. The results in our paper complete the intuition that the disc minimises the Stokes eigenvalue under volume constraints in dimension 2. However, the proof of this result is an open problem as far as we know.} 

Indeed, in the vectorial case, standard rearrangement and symmetrisation tools are bound to fail. One might then try to use an alternative approach to the Faber-Krahn inequality, typically using an overdetermined problem framework, but, to the best of our knowledge, no generalisation of the Serrin theorem can accommodate incompressibility constraints. In three dimensions, it seems useful to perform some numerical simulations to
have at least an idea of the possible optimal sets. Following our Corollary \ref{Co:HairyBall}, one can wonder whether it is a particular torus;

\paragraph{\emph{A priori} regularity of the optimal domain.}
Following Theorem \ref{Th:Existence}, we know that an optimal domain $\O^*$ exists in the class of quasi-open sets. Nevertheless, it seems extremely ambitious, at this stage, to develop an \emph{a priori} regularity theory for this optimal set. Following the usual approach to such problems,
see e.g. \cite[chapter3]{bookHedG}, the first step in that direction would be to establish that $\O^*$ is, in fact, open. To derive further properties, the optimality conditions given in Theorem \ref{Th:Necessary} would probably play a crucial role in the development of an appropriate blow-up theory.

\section*{Acknowledgments}
The authors warmly thank Dorin Bucur, who provided the main argument in the proof of existence. The authors also thank Davide Buoso, Wadim Gerner and Antoine Lemenant for helpful conversations and their comments on the results of this article.  Finally, the authors wish to warmly thank the anonymous reviewer, whose comments helped improve and clarify this article.
\color{black}

 I. M-F was partially funded by a PSL Young Researcher Starting Grant 2023. The support of the CNRS through a 2023 PEPS project is also acknowledged.  

\section*{Data Availability Statement}
No datasets were generated or analysed during the current study.

\appendix

\begin{center}
\Large{\textbf{\fbox{Appendix}}}
\end{center}

\section{The first Stokes eigenvalue in $\B_3$}\label{Ap:Saks}
In this appendix, we collect several facts about $\lambda_1(\B_3)$ and its associated eigenfunctions. The main goal is to prove the following proposition:
\begin{proposition}\label{Pr:StokesBall} Let $\omega>0$ be the first positive root of $tan(x)=x$. Then $\lambda_1(\B_3)$ {is equal to $\omega^2$ and} has multiplicity 3. An orthogonal basis of the associated eigenspace $E_1$ is given by 
\[
\left\{\U_1=\psi(r)\begin{pmatrix}0\\z\\-y\end{pmatrix}\,,\U_2=\psi(r)\begin{pmatrix}-z\\0\\x\end{pmatrix}\,, \U_3=\psi(r)\begin{pmatrix}-y\\x\\0\end{pmatrix}\right\}\] where 
\[\psi(r)=\frac{\sin (\omega r)}{\omega^2r^3}-\frac{\cos (\omega r)}{\omega r^2}.
\]
Finally, for any $i\in\{1,2,3\}$, there holds
{\[ \int_{\B_3}\Vert \U_i\Vert^2=\frac{4\pi}{3\omega^4}\left(\omega^2-\sin^2\omega\right)=\frac{4\pi}{3}\cdot \frac{\sin^2\omega}{\omega^2}=:A^2.\]}
\end{proposition}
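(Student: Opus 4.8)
The plan is to use the classical poloidal--toroidal (Mie) decomposition of divergence-free fields on the ball, which block-diagonalises the Stokes eigenvalue problem, and then to identify the lowest block.

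\emph{Step 1: reduction via the Mie decomposition.} Any $\u\in W^{1,2}_0(\B_3;\R^3)$ with $\n\cdot\u=0$ can be written
\[
\u=\curl(\phi\,\mathbf{x})+\curl\curl(\chi\,\mathbf{x})
\]
for scalar potentials $\phi,\chi$, the first summand being \emph{toroidal} and the second \emph{poloidal}; expanding $\phi$ and $\chi$ on the spherical harmonics $Y_\ell^m$ decouples \eqref{Eq:Eigenequation} into scalar radial problems indexed by $\ell$. On the toroidal block the computation is short: from $\Delta(\phi\,\mathbf{x})=(\Delta\phi)\mathbf{x}+2\n\phi$ one gets $\Delta\,\curl(\phi\,\mathbf{x})=\curl\bigl((\Delta\phi)\mathbf{x}\bigr)$, so that in $-\Delta\u+\n p=\lambda\u$ the term $\n p$ is simultaneously a gradient and a toroidal field, hence $\n p\equiv 0$, and the equation reduces to $\curl\bigl((\Delta\phi+\lambda\phi)\mathbf{x}\bigr)=0$. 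Taking $\phi=f(r)Y_\ell^m$ with $\ell\geq 1$ (the degree $\ell=0$ yields $\u\equiv 0$), this forces $f$ to solve the spherical Bessel equation, whence $f(r)=j_\ell(\sqrt\lambda\,r)$ up to a constant, and the boundary condition $\u=0$ on $\partial\B_3$ becomes $j_\ell(\sqrt\lambda)=0$.

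\emph{Step 2: the toroidal $\ell=1$ eigenspace.} Because the first positive zero of $j_\ell$ increases with $\ell$, the smallest toroidal eigenvalue occurs at $\ell=1$; and since $j_1(x)=(\sin x-x\cos x)/x^{2}$, its first positive zero $\omega$ is precisely the first positive root of $\tan x=x$, so this eigenvalue is $\omega^2$, with a three-dimensional eigenspace ($m\in\{-1,0,1\}$). Using $Y_1^m\propto x_j/r$ and the elementary identity $\curl\bigl(h(r)\,x_j\,\mathbf{x}\bigr)=h(r)\,\mathbf{e}_j\times\mathbf{x}$, one recovers, after passing to real combinations, the fields $\U_j=\psi(r)\,\mathbf{e}_j\times\mathbf{x}$ with $\psi(r)=j_1(\omega r)/r=\frac{\sin(\omega r)}{\omega^{2}r^{3}}-\frac{\cos(\omega r)}{\omega r^{2}}$, that is, the stated basis. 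Their pairwise $L^2(\B_3)$-orthogonality is immediate, since $\langle\mathbf{e}_i\times\mathbf{x},\mathbf{e}_j\times\mathbf{x}\rangle=\delta_{ij}r^{2}-x_ix_j$ has vanishing average over every sphere centred at the origin when $i\neq j$.

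\emph{Step 3: the other blocks do not lower the eigenvalue.} For $\ell\geq 2$ the toroidal eigenvalues exceed $\omega^2$ by the same monotonicity of the Bessel zeros. The poloidal block is the real difficulty: imposing $\u=0$ on $\partial\B_3$ gives two scalar conditions on the radial profile, so the poloidal eigenvalues are the roots of the vanishing of a $2\times 2$ determinant built from spherical Bessel functions of consecutive orders, and one must check that the smallest such root is strictly larger than $\omega$. This point, which cannot be reached by soft arguments and which also shows that the multiplicity is exactly $3$, is the \textbf{main obstacle}; we settle it by recalling the relevant computations of \cite{Saks_2013}. Granting it, the three blocks together give $\lambda_1(\B_3)=\omega^2$ with the announced eigenspace $E_1$.

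\emph{Step 4: the normalisation constant.} Since $\|\mathbf{e}_j\times\mathbf{x}\|^{2}=r^{2}-x_j^{2}$ and the spherical average of $x_j^{2}$ equals $r^{2}/3$, one has $\int_{\B_3}\|\U_j\|^{2}=\tfrac{8\pi}{3}\int_0^1\psi(r)^{2}r^{4}\,dr=\tfrac{8\pi}{3}\int_0^1 j_1(\omega r)^{2}r^{2}\,dr$. A Lommel-type identity for spherical Bessel functions, together with $j_1(\omega)=0$, gives $\int_0^1 j_1(\omega r)^{2}r^{2}\,dr=-\tfrac12 j_0(\omega)j_2(\omega)$; and using $\tan\omega=\omega$ one computes $j_0(\omega)=\cos\omega$ and $j_2(\omega)=-\cos\omega$, so that $\int_{\B_3}\|\U_j\|^{2}=\tfrac{4\pi}{3}\cos^{2}\omega=\tfrac{4\pi}{3}\cdot\tfrac{\sin^{2}\omega}{\omega^{2}}=\tfrac{4\pi}{3\omega^{4}}\bigl(\omega^{2}-\sin^{2}\omega\bigr)$, which is the claimed value of $A^2$.
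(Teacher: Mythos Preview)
Your argument is correct and well organised, but it follows a genuinely different route from the paper's Appendix~\ref{Ap:Saks}.

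You block-diagonalise the Dirichlet--Stokes operator via the Mie (toroidal--poloidal) decomposition: the toroidal block reduces to $j_\ell(\sqrt\lambda)=0$, the $\ell=1$ minimum gives $\lambda=\omega^2$ with a three-dimensional eigenspace, and you recover the stated basis and the value of $A^2$ by a clean Lommel-type computation. The one step you cannot close by soft arguments---that the smallest \emph{poloidal} eigenvalue strictly exceeds $\omega^2$, so that the multiplicity is exactly~$3$---you explicitly flag and defer to \cite{Saks_2013}.

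The paper instead works through the $\curl$ eigenbasis of the space $X=\{\u\in L^2:\n\cdot\u=0,\ \langle\u,\nu\rangle=0\}$. Any Dirichlet--Stokes eigenfunction lies in $X$ and, once expanded on the Beltrami fields $\u_{k,\pm}$, is supported on those with $\xi_{k,\pm}^2=\lambda$. The key lemma (Lemma~\ref{Le:Saks_2013}) shows that for every $\curl$ eigenpair $(\xi,\u_\xi)$ the scalar function $r\,u_{\xi,r}$ is a Dirichlet--Laplace eigenfunction with eigenvalue $\xi^2$; since it vanishes at the origin it cannot correspond to $\mu_1(\B_3)$, forcing $\xi^2\geq\mu_2(\B_3)=\omega^2$. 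The explicit $\curl$ eigenfunctions at $\pm\omega$ are then constructed (Lemma~\ref{Le:Decomposition}), and Corollary~\ref{Co:Libre} shows that the full Dirichlet condition singles out precisely the antisymmetric combinations $\u_{i,+}-\u_{i,-}$, which equal the $\U_i$.

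What each approach buys: your toroidal computation is shorter and produces the eigenfunctions and $A^2$ very transparently, but the poloidal lower bound remains a black box. The paper's detour through the $\curl$ spectrum trades this for the single observation that $\curl$ eigenvalues are always scalar Dirichlet--Laplace eigenvalues, which makes the lower bound $\lambda_1(\B_3)\geq\omega^2$ essentially self-contained and sidesteps any separate poloidal analysis.
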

The proof of this proposition is essentially contained in \cite{Saks_2013}, which actually gives more general results. For the reader's convenience however, we gather here the main facts necessary to obtain this result.

\paragraph{Eigenvalues of the $\curl$ operator.} A typical approach to Stokes eigenvalue problems is to factorise the vector Laplacian $\Delta$ as $\Delta=\curl(\curl)$ in the case of incompressible flows. 

The spectral problem for the $\curl$ operator is the following: find $(\xi,\u_\xi)\,, \xi \in \C\,, \u_\xi\neq 0$ such that 
\begin{equation}\label{Eq:EigenvalueCurl}
\begin{cases}
\curl(\u_\xi)= \xi \u_\xi&\text{ in }\B_3\,, 
\\ \langle \u_\xi,\nu\rangle=0&\text{ on }\partial \B_3.\end{cases}\end{equation}
In simply connected domains, it is possible to show the following spectral decomposition theorem \cite{zbMATH06248919}:
\begin{theorem}[Spectral decomposition for the $\curl$]\label{Th:SpectralTheorem}
There exist two sequences $\{\mu_{k,\pm}\}_{k\in \N}$ of eigenfunctions ordered as follows
\[ -\infty\longleftarrow \mu_{k+1,-}\leq \mu_{k,-}\leq \dots\leq\mu_{1,-}<0<\mu_{1,+}\leq\dots\leq \mu_{k,+}\leq \mu_{k+1,+}\longrightarrow \infty,\] and the associated eigenfunctions $\{\u_{k,\pm}\}_{k\in \N}$ form a Hilbert basis of the space 
\[ X:=\left\{\u \in L^2(\O)^3\,, \nabla\cdot \u=0\text{ in }\O\,, \langle \u,\nu\rangle=0\text{ on }\partial \O\right\} .\]
\end{theorem}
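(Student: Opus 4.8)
The plan is to realise the $\curl$ operator, restricted to $X$, as the inverse of a compact self-adjoint operator on $X$, and then to invoke the spectral theorem for such operators. First I would use the classical div--curl theory in a simply connected domain $\O$ with connected, simply connected boundary (which covers $\B_3$): given $\mathbf f\in X$, since $\div\mathbf f=0$ in $\O$ and $\langle\mathbf f,\nu\rangle=0$ on $\partial\O$, there is a \emph{unique} $\u\in X\cap W^{1,2}(\O;\R^3)$ with $\curl\u=\mathbf f$, $\div\u=0$ in $\O$ and $\langle\u,\nu\rangle=0$ on $\partial\O$, and it satisfies $\Vert\u\Vert_{W^{1,2}}\leq C\Vert\mathbf f\Vert_{L^2}$; uniqueness rests on the triviality of the space of harmonic fields (those that are $\curl$- and $\div$-free and tangent to $\partial\O$) on such a domain. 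This defines a bounded operator $K\colon X\to X$, $\mathbf f\mapsto\u$, with $\curl\circ K=\mathrm{Id}_X$ by construction.

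Next I would verify that $K$ is compact, injective and self-adjoint. Compactness is immediate from the $W^{1,2}$ bound and the Rellich theorem; injectivity from $K\mathbf f=0\Rightarrow\mathbf f=\curl(K\mathbf f)=0$. For self-adjointness, set $\u=K\mathbf f$, $\v=K\mathbf g$ and integrate $\div(\u\times\v)=\langle\v,\curl\u\rangle-\langle\u,\curl\v\rangle$ over $\O$; this gives
\[\langle\mathbf f,K\mathbf g\rangle_{L^2}-\langle K\mathbf f,\mathbf g\rangle_{L^2}=\int_{\partial\O}\langle\u\times\v,\nu\rangle\,d\sigma.\]
Since $\langle\curl\u,\nu\rangle=\langle\mathbf f,\nu\rangle=0$ on $\partial\O$, the tangential trace of $\u$ on $\partial\O$ is curl-free along the surface, hence a surface gradient $\nabla_{\partial\O}\phi$ (here the simple connectedness of $\partial\O$ enters), and likewise for $\v$; an integration by parts on the closed surface $\partial\O$ then shows that the right-hand side vanishes. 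Hence $K=K^{*}$.

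It then remains to apply the spectral theorem for compact self-adjoint operators on the Hilbert space $X$: there is a Hilbert basis of $X$ consisting of eigenfunctions $\u_{k,\pm}$ of $K$, with real eigenvalues accumulating only at $0$ and all nonzero (by injectivity). Applying $\curl$ to an eigen-relation $K\u_{k,\pm}=\lambda_{k,\pm}\u_{k,\pm}$ and using $\curl\circ K=\mathrm{Id}$ yields $\curl\u_{k,\pm}=\mu_{k,\pm}\u_{k,\pm}$ with $\mu_{k,\pm}=\lambda_{k,\pm}^{-1}$; together with $\u_{k,\pm}\in X$, these are precisely the eigenpairs of \eqref{Eq:EigenvalueCurl}. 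Sorting the $\mu_{k,\pm}$ by sign and ordering them within each sign produces the announced interlacing $\cdots\leq\mu_{1,-}<0<\mu_{1,+}\leq\cdots$; that both signs occur follows from the sign-indefiniteness of the associated quadratic form $\mathbf f\mapsto\langle K\mathbf f,\mathbf f\rangle_{L^2}=\int_\O\langle K\mathbf f,\curl(K\mathbf f)\rangle$ (the helicity), and that each sequence is infinite is part of the cited result --- transparent, for $\B_3$, from the explicit description of the spectrum in terms of zeros of spherical Bessel functions.

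The step I expect to be the main obstacle is the self-adjointness of $K$, more precisely showing that the boundary term produced by Green's formula for $\curl$ integrates to zero: this is exactly where the topological hypotheses on $\O$ and $\partial\O$ are indispensable. The div--curl solvability result underlying the definition of $K$ is classical but also genuinely uses simple connectedness, so the whole argument is sensitive to the topology of the domain --- which is precisely what one expects, since $\curl$-eigenvalue problems behave very differently on, say, a solid torus.
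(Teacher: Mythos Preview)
The paper does not actually prove this theorem: it is stated as a known result with a citation (the sentence immediately preceding the theorem reads ``In simply connected domains, it is possible to show the following spectral decomposition theorem \cite{zbMATH06248919}''), and is then used as a black box in the analysis of the first Stokes eigenspace in $\B_3$. So there is no ``paper's own proof'' to compare against.

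That said, your sketch is the standard route to this result (essentially the Yoshida--Giga approach): realise $\curl$ on $X$ as the inverse of a compact self-adjoint operator and apply the spectral theorem. The existence/uniqueness of $K\mathbf f$ and the vanishing of harmonic tangent fields are exactly where simple connectedness of $\O$ enters, and your handling of the boundary term in the self-adjointness computation is correct: $(\curl\u)\cdot\nu=\mathrm{curl}_{\partial\O}(\u_\tau)=0$ forces $\u_\tau=\nabla_{\partial\O}\phi$ on a simply connected $\partial\O$, and then $\int_{\partial\O}(\nabla_{\partial\O}\phi\times\nabla_{\partial\O}\psi)\cdot\nu$ is the integral of the exact $2$-form $d(\phi\,d\psi)$ over a closed surface, hence zero. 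Two small remarks: (i) the claim that \emph{both} signs occur (helicity is indefinite) is not entirely obvious in a general simply connected domain and deserves a word --- for $\B_3$ it is of course immediate from the explicit eigenfunctions; (ii) some care with trace regularity is needed to justify the surface integration by parts, since $\u\in W^{1,2}$ gives only $\u_\tau\in W^{1/2,2}(\partial\O)$, but this is routine.
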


In this theorem, we use (implicitly) the fact that there is a natural notion of trace for zero-divergence vector fields.

\paragraph{Some explicit computations in the case of the ball.}
We now give a procedure to construct eigenfunctions. All the results stated are contained, in one way or another, in \cite{Saks_2013,Cantarella_2000}. We use the standard spherical coordinates $(r,\theta,\phi)$, and the associated orthonormal basis $(\er,\et,\ep)$. Recall that in this coordinate frame we have 
\[ \begin{cases}x=r\sin\theta \cos(\phi)\,,\\ y=r\sin\theta \sin(\phi)\,,\\ z=r\cos(\phi)\end{cases} \text{ and }\begin{cases}\er= \frac{x \vec{e}_{x} + y \vec{e}_{y} + z \vec{e}_{z}}{\sqrt{x^2 + y^2 + z^2}} \\
  \vec{e}_{\theta} = \frac{\left(x \vec{e}_{x} + y \vec{e}_{y}\right) z - \left(x^2 + y^2\right) \vec{e}_{z}}{\sqrt{x^2 + y^2 + z^2} \sqrt{x^2 + y^2}} \\
 \vec{e}_{\varphi} = \frac{-y \vec{e}_{x} + x \vec{e}_{y}}{\sqrt{x^2 + y^2}}
\end{cases}\]

\begin{lemma}\label{Le:Saks_2013}
Let $(\xi,\u_\xi)$ be a solution of \eqref{Eq:EigenvalueCurl} in $\B_3$ and write, in spherical coordinates,
\[\u_\xi=u_{\xi,r}\vec{e}_r+u_{\xi,\theta}\vec{e}_\theta+u_{\xi,\phi}\vec{e}_\phi.\] 
Then:
\begin{enumerate}
\item There holds $u_{\xi,r}\neq 0$.
\item $ru_{\xi,r}$ is an eigenfunction of the (scalar) Dirichlet-Laplace operator, associated with the eigenvalue $\xi^2$.
\end{enumerate}
\end{lemma}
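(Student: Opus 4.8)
The plan is to exploit the factorisation $\curl\curl=\nabla\div-\Delta$ mentioned at the beginning of this appendix. First I would record the algebraic and regularity consequences of \eqref{Eq:EigenvalueCurl}: since $\u_\xi\in X$ (so $\div\u_\xi=0$) and $\u_\xi\neq0$, one has $\xi\neq0$ --- a divergence-free, curl-free field with vanishing normal trace on the simply connected $\mathbb{B}_3$ is identically zero. Applying $\curl$ to \eqref{Eq:EigenvalueCurl} once more and using $\div\u_\xi=0$ then gives $-\Delta\u_\xi=\curl\curl\u_\xi=\xi^2\u_\xi$, so that, in Cartesian coordinates, each component of $\u_\xi$ is a Helmholtz eigenfunction; in particular $\u_\xi$ is smooth, hence bounded, in the interior of $\mathbb{B}_3$. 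This interior regularity will be used in the proof of the first assertion.

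Assuming the first assertion, the second one is then essentially a computation. I would work with the scalar function $w:=r\,u_{\xi,r}=\langle\u_\xi,x\rangle=\sum_{i=1}^3 x_i(\u_\xi)_i$ (using $\er=x/|x|$). The Leibniz rule for the Laplacian, together with $\Delta x_i=0$ and $\Delta(\u_\xi)_i=-\xi^2(\u_\xi)_i$, yields
\[ \Delta w=\sum_{i=1}^3\bigl(2\,\partial_i(\u_\xi)_i-\xi^2 x_i(\u_\xi)_i\bigr)=2\,\div\u_\xi-\xi^2 w=-\xi^2 w. \]
On $\partial\mathbb{B}_3$ the outward normal is $\nu=x$, so $w=\langle\u_\xi,\nu\rangle=0$ there by the boundary condition in \eqref{Eq:EigenvalueCurl}. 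Hence, provided $w\not\equiv0$ --- which is precisely the first assertion --- the function $w=r\,u_{\xi,r}$ is a Dirichlet eigenfunction of $-\Delta$ on $\mathbb{B}_3$ associated with the eigenvalue $\xi^2$.

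It remains to prove the first assertion, and this is the step I expect to require the most care. I would argue by contradiction: suppose $u_{\xi,r}\equiv0$. Writing $\curl$ in spherical coordinates, the $\et$- and $\ep$-components of $\curl\u_\xi=\xi\u_\xi$ reduce, once $u_{\xi,r}=0$, to the system $\partial_r(r\,u_{\xi,\theta})=\xi\, r\,u_{\xi,\phi}$ and $\partial_r(r\,u_{\xi,\phi})=-\xi\, r\,u_{\xi,\theta}$. Setting $g:=r\,u_{\xi,\theta}+i\,r\,u_{\xi,\phi}$ one gets $\partial_r g=-i\xi g$, hence $g(r,\theta,\phi)=C(\theta,\phi)e^{-i\xi r}$; letting $r\to0^+$ and using that $\u_\xi$ is bounded near the origin (so that $r\,u_{\xi,\theta},\,r\,u_{\xi,\phi}\to0$) forces $C\equiv0$, i.e. $u_{\xi,\theta}=u_{\xi,\phi}=0$ on $\{r>0\}\cap\{\theta\in(0,\pi)\}$, whence $\u_\xi\equiv0$ by continuity, contradicting $\u_\xi\neq0$. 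The genuinely delicate points here are the interior smoothness and boundedness of $\u_\xi$ at $r=0$ (supplied by the Helmholtz equation derived above) and the handling of the coordinate singularities of the spherical frame, which is why I would draw the conclusion $\u_\xi\equiv0$ by density from the open set where the frame is smooth, rather than pointwise on all of $\mathbb{B}_3$.
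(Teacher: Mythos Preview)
Your proof is correct and follows essentially the same strategy as the paper. For the second assertion, both you and the paper compute $\Delta\langle x,\u_\xi\rangle$ via the Leibniz rule and the identity $-\Delta\u_\xi=\curl\curl\u_\xi=\xi^2\u_\xi$; for the first, both argue by contradiction, reducing the $\et$- and $\ep$-components of \eqref{Eq:EigenvalueCurl} (with $u_{\xi,r}=0$) to a radial ODE with vanishing initial data at $r=0$. The only cosmetic difference is that you package the system as a single complex first-order equation $\partial_r g=-i\xi g$, while the paper differentiates once more to obtain the second-order equation $w''+\xi^2 w=0$ and invokes Cauchy--Lipschitz; your explicit attention to interior boundedness and to the coordinate singularities of the spherical frame makes the regularity hypotheses more transparent than in the paper's version.
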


\begin{proof}[Proof of Lemma \ref{Le:Saks_2013}]
\begin{enumerate}
\item Recall that, in polar coordinates, we have (for a vector field $\u=u_r\er+u_\theta\et+u_\phi\e\phi$),
\begin{multline*}
\curl(\u)=\frac1{r\sin\theta }\left(\frac{\partial (u_\phi\sin\theta )}{\partial\theta}-\frac{\partial u_\theta}{\partial \phi}\right)\er\\+
\frac1r\left(\frac1{\sin\theta }\frac{\partial u_r}{\partial\phi}-\frac{\partial( ru_\phi)}{\partial r}\right)\et
\\+\frac1r\left(\frac{\partial (r u_\theta)}{\partial r}-\frac{\partial u_r}{\partial \theta}\right)\ep.\quad \quad\quad \quad\quad
\end{multline*}
Thus, if $u_{\xi,r}=0$ we deduce that $(u_{\xi,\theta},u_{\xi,\phi})$ solves (in particular) the system
\[
\begin{cases}
-\frac{\partial(ru_\phi)}{\partial r}=\xi ru_\theta\,, 
\\ \frac{\partial (ru_\theta)}{\partial r}=\xi ru_\phi.
\end{cases}\]
Letting $w_{\theta/\phi}:=ru_{\xi,\theta/\phi}$ we obtain 
\[ -\frac{\partial^2 w_{\theta/\phi}}{\partial r^2}=\xi^2 w_{\theta/\phi}\,, w_{\theta/\phi}(0)=w_{\theta/\phi}'(0)=0.\]
The Cauchy-Lipschitz theorem implies $w_{\theta/\phi}\equiv 0$, a contradiction.
\item Let $(\xi,\u_\xi)$ be a solution of \eqref{Eq:EigenvalueCurl}. Define $v:=\langle x,\u\rangle=ru_{\xi,r}$. We have 
\[ -\Delta v=\langle x,\Delta\u\rangle-2\div(\u)=\xi^2\langle x,\u\rangle. \] Here, we used that $\div(\u)=0$, whence $-\Delta \u=\curl(\curl(\u))$. Furthermore, $v=\langle x,\u\rangle=0$ on $\partial \B_3$. As $u_{\xi,r}\neq 0$, the conclusion follows.
\end{enumerate}
\end{proof}

\paragraph{Explicit expression for the lowest eigenvalue of the $\curl$ operator in $\B_3$.}
Let $\omega>0$ be defined as the first positive root of $x=\tan(x)$. We have the following proposition:
\begin{proposition}\label{Pr:LowestCurl}
$\omega$ is the lowest positive eigenvalue of the $\curl$ operator and $-\omega$ is the largest negative eigenvalue of the $\curl$ operator. Both have multiplicity 3.
\end{proposition}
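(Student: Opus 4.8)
The plan is to reduce the spectral problem \eqref{Eq:EigenvalueCurl} for $\curl$ in $\B_3$ to a scalar Dirichlet--Laplacian problem by means of Lemma~\ref{Le:Saks_2013}, and then to locate the extremal eigenvalues among the zeros of spherical Bessel functions, using a parity symmetry to pass from $\omega$ to $-\omega$. Throughout, by Theorem~\ref{Th:SpectralTheorem} the $\curl$-spectrum in the ball is a discrete sequence, so "lowest positive" and "largest negative" are meaningful.

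\emph{Step 1: a sharp lower bound $|\xi|\geq\omega$.} Let $(\xi,\u_\xi)$ solve \eqref{Eq:EigenvalueCurl} with $\xi\neq 0$, and put $v:=\langle x,\u_\xi\rangle = ru_{\xi,r}$. By Lemma~\ref{Le:Saks_2013}, $v\not\equiv 0$, $-\Delta v=\xi^2 v$ in $\B_3$ and $v=0$ on $\partial\B_3$, so $\xi^2$ lies in the Dirichlet spectrum of $\B_3$. The additional key observation is that $v$ carries no spherically symmetric component: for every $\rho\in(0,1)$ the average of $u_{\xi,r}$ over the sphere of radius $\rho$ equals $\frac{1}{4\pi\rho^2}\int_{\B(0,\rho)}\div\u_\xi=0$, since $\div\u_\xi=0$. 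Hence $v$ is a combination of Dirichlet eigenfunctions with angular index $\ell\geq 1$, i.e. of the form $r\,j_\ell(\xi r)Y_\ell^m$ where $j_\ell$ denotes the spherical Bessel function of order $\ell$ (not to be confused with the zeros $j_{\alpha,k}$ of the classical Bessel functions used above) and $j_\ell(\xi)=0$. Since $j_1(x)=\frac{\sin x}{x^2}-\frac{\cos x}{x}$ vanishes exactly when $\tan x=x$, and since the first positive zero of $j_\ell$ is strictly increasing in $\ell$ (a classical fact on zeros of Bessel functions), the smallest positive zero over all $\ell\geq 1$ is precisely $\omega$. Therefore $|\xi|\geq\omega$ for every $\curl$-eigenvalue $\xi$ in $\B_3$.

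\emph{Step 2: $\pm\omega$ are attained with multiplicity exactly $3$.} For existence I would use the Chandrasekhar--Kendall fields: for $m\in\{-1,0,1\}$ set $\phi_m:=j_1(\omega r)Y_1^m$, which satisfies $\Delta\phi_m+\omega^2\phi_m=0$; a direct computation (as in \cite{Saks_2013,Cantarella_2000}) shows that $\u^{(m)}:=\omega\,\curl(\phi_m\,x)+\curl\curl(\phi_m\,x)$ obeys $\curl\u^{(m)}=\omega\u^{(m)}$ and has radial component proportional to $\frac{1}{r}j_1(\omega r)Y_1^m$, which vanishes on $\partial\B_3$ precisely because $j_1(\omega)=0$; the three fields are linearly independent, so the $\omega$-eigenspace has dimension at least $3$. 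For the reverse bound, note that the strict monotonicity of the zeros of $j_\ell$ in both indices forces $\omega=j_{\ell,k}^{\mathrm{sph}}$ to hold only for $\ell=k=1$; hence the $\omega^2$-eigenspace of the Dirichlet--Laplacian in $\B_3$ is exactly the $3$-dimensional space spanned by the $r\,j_1(\omega r)Y_1^m$. The linear map $\u\mapsto ru_r$ sends the $\omega$-eigenspace of $\curl$ into this space, and it is injective: if $ru_r\equiv 0$ then, arguing exactly as in the proof of Lemma~\ref{Le:Saks_2013}(1), the Beltrami equation forces the tangential components to solve a second-order ODE in $r$ with vanishing Cauchy data, so $\u\equiv 0$. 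Thus the $\omega$-eigenspace has dimension exactly $3$.

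\emph{Step 3: $-\omega$ is the largest negative eigenvalue.} The pullback $\u(\cdot)\mapsto\u(-\cdot)$ is a linear isometry of $L^2(\B_3;\R^3)$ (as $\B_3$ is centrally symmetric) which turns a solution of $\curl\u=\xi\u$ into a solution of $\curl\u=-\xi\u$, because $\curl$ anticommutes with this pullback. It therefore identifies the $\xi$- and $(-\xi)$-eigenspaces; in particular the $(-\omega)$-eigenspace is $3$-dimensional, and there can be no eigenvalue in $(-\omega,0)$ since it would mirror one in $(0,\omega)$, excluded by Step~1. Finally $0$ is not an eigenvalue: a $\div$-free, $\curl$-free field tangent to $\partial\B_3$ vanishes on the simply connected $\B_3$. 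Combining, the $\curl$-spectrum in the window around $0$ consists of $-\omega$ and $\omega$, each of multiplicity $3$, which is the claim; moreover this description also supplies the eigenfields feeding into Proposition~\ref{Pr:StokesBall}.

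\emph{Main obstacle.} The conceptual skeleton above is elementary once two classical but non-trivial inputs are granted and invoked carefully: the strict monotonicity in the order of the first positive zeros of the spherical Bessel functions $j_\ell$, and the validity and completeness of the poloidal--toroidal (Chandrasekhar--Kendall) representation, which underlies both the explicit construction at $\omega$ and the implicit fact that Lemma~\ref{Le:Saks_2013} exhausts all eigenfields; for the latter we rely on \cite{Saks_2013,Cantarella_2000}. The remaining steps --- the boundary reduction $j_1(\omega)=0$ for the CK fields and the injectivity argument --- are routine computations.
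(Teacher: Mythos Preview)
Your proof is correct and follows the same overall architecture as the paper's --- reducing to the scalar Dirichlet problem via Lemma~\ref{Le:Saks_2013} and then locating the extremal $\xi$ among Bessel zeros --- but with a few genuine differences in execution that are worth recording.

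For the lower bound, the paper argues that $\xi_1^2\neq\mu_1(\B_3)$ because $r u_{\xi,r}$ vanishes at the origin while any first Dirichlet eigenfunction has strict sign; since $\xi_1^2$ is a Dirichlet eigenvalue, this forces $\xi_1^2\geq\mu_2=\omega^2$. Your zero-spherical-average observation (killing the $\ell=0$ mode via the divergence theorem and $\div\u_\xi=0$) reaches the same conclusion more directly and in a slightly stronger form, ruling out all $\ell=0$ contributions at once rather than just $\mu_1$.

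For existence and the upper multiplicity bound, the paper proceeds by explicit integration: Lemma~\ref{Le:Decomposition} constructs, for each $v_i$ of Lemma~\ref{Le:Ball}, the \emph{unique} pair $\u_{i,\pm}$ with prescribed radial component, by solving the second-order ODEs for the tangential parts and writing $F,G$ in closed form. This yields both existence at $\pm\omega$ and the bound multiplicity~$\leq 3$ simultaneously, and the explicit formulas are reused downstream in Corollary~\ref{Co:Libre} and Proposition~\ref{Pr:StokesBall}. Your route --- invoking the Chandrasekhar--Kendall ansatz for existence and citing Lemma~\ref{Le:Saks_2013}(1) for injectivity of $\u\mapsto r u_r$ --- is more conceptual but defers the concrete construction to \cite{Saks_2013,Cantarella_2000}.

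For the negative eigenvalue, the paper treats $\u_{i,-}$ on exactly the same footing as $\u_{i,+}$ inside Lemma~\ref{Le:Decomposition}, whereas your parity map $\u\mapsto\u(-\cdot)$ identifies the $\pm\omega$ eigenspaces at once. This is an elegant shortcut, though it is the paper's explicit combination $\u_{i,+}-\u_{i,-}$ that is actually used later to obtain Dirichlet (rather than tangential) boundary data.

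One minor imprecision: the claim that the \emph{full} Dirichlet $\omega^2$-eigenspace is $3$-dimensional does not follow from monotonicity of Bessel zeros in both indices alone; one must also exclude the $\ell=0$ tower, i.e.\ check $\omega\neq k\pi$, which is immediate from $\tan(k\pi)=0\neq k\pi$. This does not affect your argument, since Step~1 has already confined the image of $\u\mapsto r u_r$ to the $\ell\geq 1$ sector, and that restriction is all the injectivity bound needs.
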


In order to prove this result, recall the following description of the second eigenspace of the Laplacian:
\begin{lemma}\label{Le:Ball}
Let $\psi$ be defined as 
\[\psi(r)=\frac{\sin (\omega r)}{\omega^2r^3}-\frac{\cos (\omega r)}{\omega r^2}.
\] The second eigenvalue $\mu_2$ of the Dirichlet-Laplacian in $\B_3$ is $\omega^2$. Furthermore, $\mu_2(\B_3)$ has multiplicity 3. An orthogonal basis of the eigenspace $E_{\omega^2}^{-\Delta}$ is $(v_1\,, v_2\,, v_3)$, where 
\[ \begin{cases}
v_1:(r,\theta,\phi)\mapsto r\psi(r)\sin\theta \cos\phi\,,
\\ v_2:(r,\theta,\phi)\mapsto r\psi(r)\sin\theta \sin\phi\,,
\\ v_3:(r,\theta,\phi)\mapsto r\psi(r)\cos\theta.
\end{cases}\]

\end{lemma}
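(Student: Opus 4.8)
The plan is to use the classical separation of variables for the Dirichlet-Laplacian on the ball. Writing an eigenfunction of $-\Delta$ on $\B_3$ in spherical coordinates as $v(r,\theta,\phi)=R(r)Y(\theta,\phi)$, one finds that $Y$ must be a spherical harmonic of some degree $\ell\in\N$ and that, imposing regularity at the origin, $R(r)=r^{-1/2}J_{\ell+1/2}(\sqrt\mu\,r)$ up to a multiplicative constant, where $\mu$ is the eigenvalue. The Dirichlet condition $R(1)=0$ then forces $J_{\ell+1/2}(\sqrt\mu)=0$. Hence the eigenvalues of $-\Delta$ on $\B_3$ are exactly the numbers $j_{\ell+1/2,n}^2$, $(\ell,n)\in\N\times\N^*$, and $j_{\ell+1/2,n}^2$ has multiplicity $2\ell+1$, coming from the $(2\ell+1)$-dimensional space of degree-$\ell$ spherical harmonics (the absence of accidental degeneracy among the small eigenvalues is addressed below).

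Next I would identify the two smallest eigenvalues. For $\ell=0$, $J_{1/2}(x)=\sqrt{2/(\pi x)}\,\sin x$ has zeros $n\pi$, so this sector contributes $\pi^2,(2\pi)^2,\dots$, each simple. For $\ell=1$, $J_{3/2}(x)=\sqrt{2/(\pi x)}\,(\sin x/x-\cos x)$ vanishes (for $x>0$) precisely when $\tan x=x$; since $\sin x/x-\cos x>0$ on $(0,\pi]$ (equivalently $\sin x>x\cos x$ there) and is negative at $3\pi/2$, the first positive zero satisfies $j_{3/2,1}=\omega\in(\pi,\tfrac{3\pi}{2})$. Thus $\omega^2$ is an eigenvalue of multiplicity $3$, with eigenspace spanned by $j_1(\omega r)$ times the degree-one spherical harmonics, where $j_1(x)=\sqrt{\pi/(2x)}\,J_{3/2}(x)=\sin x/x^2-\cos x/x$.

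To conclude that $\mu_2(\B_3)=\omega^2$ and that its multiplicity is exactly $3$, I would invoke the classical monotonicity of the first positive zero $j_{\nu,1}$ of $J_\nu$ in $\nu\geq 0$: this gives $\pi=j_{1/2,1}<\omega=j_{3/2,1}<j_{5/2,1}<\cdots$, so every eigenvalue from the sectors $\ell\geq 2$ is at least $j_{5/2,1}^2>\omega^2$; combined with $\omega<\tfrac{3\pi}{2}<2\pi$, the next eigenvalue from $\ell=0$ is $(2\pi)^2>\omega^2$. Hence $\mu_1(\B_3)=\pi^2$ is simple, nothing lies strictly between $\pi^2$ and $\omega^2$, and $\mu_2(\B_3)=\omega^2$ has multiplicity exactly $3$ with no accidental degeneracy. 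For the eigenfunctions: the degree-one harmonics are $\sin\theta\cos\phi=x/r$, $\sin\theta\sin\phi=y/r$, $\cos\theta=z/r$, and a one-line computation gives $j_1(\omega r)=r\psi(r)$ with $\psi$ as in the statement; multiplying the harmonics by $j_1(\omega r)=r\psi(r)$ produces exactly $v_1,v_2,v_3$ (equivalently $\psi(r)x,\psi(r)y,\psi(r)z$). Their pairwise $L^2(\B_3)$-orthogonality follows from that of the degree-one spherical harmonics on $S^2$ — concretely, from $\int_{S^2}x_ix_j\,d\sigma=0$ for $i\neq j$ after separating off the radial factor $\int_0^1 r^2\big(r\psi(r)\big)^2\,dr$. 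Finally $\psi(1)=j_1(\omega)=0$ since $\tan\omega=\omega$, re-confirming the Dirichlet condition.

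The only non-elementary input — and the step I would treat most carefully — is the ordering claim: ruling out eigenvalues in $(\pi^2,\omega^2)$ as well as eigenvalues $\leq\omega^2$ coming from the sectors $\ell\geq 2$. This is precisely where the monotonicity of $j_{\nu,1}$ in $\nu$ is used; it is a standard fact about Bessel functions (provable, e.g., via the variational characterisation of Bessel zeros, or directly from Watson's treatise). Everything else is bookkeeping with spherical harmonics together with the identity linking $j_1$ and $\psi$.
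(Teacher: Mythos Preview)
Your argument is correct and complete. The paper does not actually prove this lemma: it is introduced with the phrase ``recall the following description of the second eigenspace of the Laplacian'' and is used as a known input in the derivation of Proposition~\ref{Pr:LowestCurl}, with no accompanying proof. Your separation-of-variables computation, the identification of the radial part with the spherical Bessel function $j_1$, and the ordering argument via the monotonicity of $j_{\nu,1}$ in $\nu$ together constitute exactly the standard justification one would supply for this recalled fact; the identity $j_1(\omega r)=r\psi(r)$ and the check that $\omega\in(\pi,3\pi/2)<2\pi$ are both correct.
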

To derive Proposition \ref{Pr:LowestCurl}, let $\xi_1$ be the lowest positive eigenvalue of $\curl$, and $\u_1$ be an associated eigenfunction. Then, by Lemma \ref{Le:Saks_2013}, we know that 
\[ \xi_1\geq \sqrt{\mu_1(\B_3)},\] where $\mu_1(\B_3)$ is the first eigenvalue of the Dirichlet Laplacian.
 In the case of the ball, in fact, we have 
 \begin{equation}\label{Eq:EqMagique}
  \xi_1> \sqrt{\mu_1(\B_3)}.\end{equation}
  
  \begin{proof}[Proof of \eqref{Eq:EqMagique}]
  If we had $\xi_1=\sqrt{\mu_1}(\B_3)$, then $\phi:=r{u_{\xi_1,r}}$ is an eigenfunction of the Dirichlet-Laplacian with eigenvalue $\mu_1(\B_3)$. However, $\phi(0)=0$, while, as $\mu_1(\B_3)$ is simple, any eigenfunction associated with $\mu_1$ has a (strict) constant sign in $\B_3$, a contradiction.
  \end{proof}

Proposition \ref{Pr:LowestCurl} is thus implied by the following lemma:
\begin{lemma}\label{Le:Decomposition}
For any $i\in\{1,2,3\}$, there exists a unique $\u_{i,\pm}$ such that:
\begin{enumerate}
\item $(\u_{i,\pm},\pm\omega)$ is a solution of \eqref{Eq:EigenvalueCurl},
\item $\langle x,\u_{i,\pm}\rangle=v_i.$
\end{enumerate}
The $\u_{i,\pm}$ have the following explicit expressions (in spherical coordinates):
\begin{equation}\begin{cases}\u_{1,\pm}=\sin\theta \cos(\phi)r\psi(r)\er+(\frac{F}{r}\cos(\theta)\cos(\phi)\pm \frac{G}{r}\sin(\phi))\et+(\pm \frac{G}{r}\cos(\theta)\cos(\phi)-\frac{F}{r}\sin(\phi))\ep\,, 
\\ \u_{2,\pm}=\sin\theta \sin(\phi)r\psi(r)\er+(\frac{F}{r}\cos(\theta)\sin(\phi)\mp \frac{G}{r}\cos(\phi))\et+(\pm \frac{G}{r}\cos(\theta)\sin(\phi)+\frac{F}{r}\cos(\phi))\ep,
\\\u_{3,\pm}=\cos(\theta)r\psi(r)\er-\frac{F}{r}\sin\theta \et\mp \frac{G}{r}\sin\theta \ep,
\end{cases}
\end{equation}
where $F=F(r)\,, G=G(r)$ satisfy
\begin{equation}
\begin{cases}
F''+\omega F=\frac{d}{dr}\left(\frac1{r^2}\psi(r)\right)\,,
\\ G''+\omega G=-\omega\frac{\psi}{r^2}\,, 
\end{cases}
\end{equation}
The expressions for $F\,, G$ are explicit
\[\begin{cases}
F(r)=\left(\frac{\cos(\omega r)}{\omega r}-\frac{\sin(\omega r)}{\omega^2 r^2}+\sin(\omega r)\right)
\\G(r)=-\left(\frac{\sin(\omega r)}{\omega r}-\cos(\omega r)\right)\,, 
\end{cases}\] and $F(1)\neq 0\,, G(1)=0$.
\end{lemma}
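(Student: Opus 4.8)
The plan is constructive, and mirrors Lemma~\ref{Le:Saks_2013} read backwards: instead of extracting a Laplace eigenfunction from a curl eigenfield, I would lift each $v_i$ to curl eigenfields with eigenvalues $\pm\omega$. By Lemma~\ref{Le:Ball}, $v_i$ is a Dirichlet eigenfunction of $-\Delta$ in $\B_3$ for the eigenvalue $\omega^2$, smooth up to $\partial\B_3$, and it vanishes on $\partial\B_3$ precisely because the defining relation $\tan\omega=\omega$ is exactly $j_1(\omega)=0$. I would set $\chi:=\tfrac12 v_i$, so that $-\Delta\chi=\omega^2\chi$ in $\B_3$ and $\chi=0$ on $\partial\B_3$.

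For existence I would use the poloidal--toroidal (Chandrasekhar--Kendall) representation. Writing $x$ for the position vector, set $\mathbf T:=\curl(\chi x)=\n\chi\times x$ and $\mathbf P:=\curl\mathbf T$. Then $\div\mathbf T=0$, so $\curl\mathbf P=\curl\curl\mathbf T=-\Delta\mathbf T$; since $\Delta$ commutes with $\curl$ and $\Delta(\chi x)=(\Delta\chi)x+2\n\chi$ with $-\Delta\chi=\omega^2\chi$, one gets $-\Delta\mathbf T=\omega^2\mathbf T$, hence $\curl\mathbf P=\omega^2\mathbf T$ and $\curl\mathbf T=\mathbf P$. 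Consequently $\u_{i,\pm}:=\pm\omega\,\mathbf T+\mathbf P$ satisfies $\curl\u_{i,\pm}=\pm\omega\,\u_{i,\pm}$. Moreover $\langle x,\mathbf T\rangle=\langle x,\n\chi\times x\rangle=0$, and a direct computation using $\curl\curl=\n\,\div-\Delta$ together with the Euler identity $\langle x,\n g\rangle=r\partial_r g$ gives $\langle x,\mathbf P\rangle=(r^2\partial_{rr}+2r\partial_r+\omega^2 r^2)\chi$, which for a degree-one harmonic profile (where $\Delta\chi=(\partial_{rr}+\tfrac2r\partial_r-\tfrac2{r^2})\chi$) collapses to $2\chi=v_i$. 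Thus $\langle x,\u_{i,\pm}\rangle=v_i$, and since $\nu=x$ on $\partial\B_3$ we also get $\langle\u_{i,\pm},\nu\rangle=v_i=0$ there, i.e.\ $\u_{i,\pm}$ solves~\eqref{Eq:EigenvalueCurl}.

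For uniqueness I would invoke Lemma~\ref{Le:Saks_2013}(1): if $\u$ and $\u'$ both meet the two requirements, then $\u-\u'$ solves~\eqref{Eq:EigenvalueCurl} for $\pm\omega$ and has identically vanishing radial component $\langle x,\u-\u'\rangle/r$; since a nonzero curl eigenfield has non-vanishing radial component, $\u=\u'$. For the explicit formulae I would substitute $\chi=\tfrac12 r\psi(r)Y_i(\theta,\phi)$ (with $Y_i$ the relevant degree-one spherical harmonic) and compute $\mathbf T$ and $\mathbf P$ in the frame $(\er,\et,\ep)$ from the spherical expression of $\curl$ recalled in the proof of Lemma~\ref{Le:Saks_2013}; one reads off that the radial component of $\u_{i,\pm}$ is $v_i/r$ while the two tangential components carry radial profiles $\tfrac1r F$ and $\pm\tfrac1r G$, and plugging back into $-\Delta\mathbf T=\omega^2\mathbf T$ (equivalently $\curl\u=\pm\omega\u$) produces the two scalar ODEs for $F$ and $G$ recorded in the statement, the solutions regular at $r=0$ being the stated ones. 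Finally $G(1)=-(\tfrac{\sin\omega}{\omega}-\cos\omega)=0$ and $F(1)=\sin\omega\neq0$ both follow from $\tan\omega=\omega$ (for the latter, $\sin\omega=0$ would force $\omega=\tan\omega=0$).

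The conceptual core will be painless once the poloidal--toroidal ansatz is in place; the only genuine work will be bookkeeping --- translating $\mathbf T$ and $\mathbf P$ into $(\er,\et,\ep)$-components and checking that $F$ and $G$ as written solve the announced ODEs --- and this computation is essentially carried out in \cite{Saks_2013,Cantarella_2000}, so I would reproduce it rather than re-derive it. The one subtle point worth stressing is that the harmonics here have degree $\ell=1$: it is exactly this that makes $\langle x,\mathbf P\rangle$ a nonzero multiple of $\chi$, which is what renders the lift nontrivial and pins the eigenvalue to $\pm\omega$.
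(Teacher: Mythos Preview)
Your proof is correct and takes a genuinely different route from the paper. The paper argues by direct resolution: it writes the sought-after field as $\u_\pm=(v/r)\er+w\et+z\ep$, substitutes into the spherical-coordinate expression of $\curl\u=\pm\omega\u$, eliminates to obtain second-order ODEs in $r$ for $\Phi_w:=rw$ and $\Phi_z:=rz$ with forcing terms built from $v$, and then simply checks that the announced $F,G$ furnish the regular solutions. You instead invoke the Chandrasekhar--Kendall (poloidal--toroidal) construction: with $\chi=\tfrac12 v_i$, the fields $\mathbf{T}=\curl(\chi x)$ and $\mathbf{P}=\curl\mathbf{T}$ give $\u_{i,\pm}=\pm\omega\mathbf{T}+\mathbf{P}$ as curl eigenfields \emph{a priori}, and your identity $\langle x,\mathbf{P}\rangle=(r^2\partial_{rr}+2r\partial_r+\omega^2 r^2)\chi=2\chi$ (using that the harmonics have degree $\ell=1$) pins the normalisation before any coordinate computation. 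Both approaches rely on Lemma~\ref{Le:Saks_2013}(1) for uniqueness and both leave the explicit $(\er,\et,\ep)$-formulae and the verification of $F(1)\neq0$, $G(1)=0$ to direct calculation. Your route is more conceptual --- existence is transparent and the role of $\ell=1$ is highlighted --- and would generalise cleanly to higher harmonics; the paper's route is more self-contained in that it needs no poloidal--toroidal background, only the spherical $\curl$ formula already recalled.
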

%
%
%
\begin{proof}[Proof of Lemma \ref{Le:Decomposition}]
To study the existence of such functions, we can follow the approach of \cite[Lemma 2]{Saks_2013}. Dropping the indices $i$ in $\u_{i,\pm}$ and $v_i$, the function $\u_\pm=\pm\frac{v}r \er+w \et+z\ep$ must satisfy
\begin{equation}\begin{cases}
\frac1{r\sin\theta }\left(\frac{\partial (z\sin\theta )}{\partial\theta}-\frac{\partial w}{\partial \phi}\right)&=\sqrt{\mu}\frac{ v}r\\
\frac1r\left(\frac1{\sin\theta }\frac{\partial \frac{ v}r}{\partial\phi}-\frac{\partial( rz)}{\partial r}\right)&=\pm\sqrt{\mu}w
\\\frac1r\left(\frac{\partial (r w)}{\partial r}-\frac{\partial \frac{ v}r}{\partial \theta}\right)&=\pm\sqrt{\mu}z.\end{cases}
\end{equation}
We thus obtain 
\[\frac{\partial^2 (rw)}{\partial r^2}+\mu (rw)=\frac{\partial}{\partial r}\left(\frac1{r}\frac{\partial v}{\partial \theta}\right)\pm\frac{\sqrt{\mu}}{r\sin\theta }\frac{\partial v}{\partial \phi}.\] We derive a similar equation for $z$, namely:\[ \frac{\partial^2(rz)}{\partial r^2}+\mu(rz)=\mp\frac{\sqrt{\mu}}r\frac{\partial v}{\partial \theta}+\frac1{\sin\theta }\frac{\partial}{\partial r}\left(\frac1{r}\frac{\partial v}{\partial \phi}\right).\]

Writing these two equations as a system in $\Phi_{w,\pm}:=rw\,, \Phi_{z,\pm}:=rz$, which satisfy $\Phi_{w/z,\pm}(0,\theta,\phi)=\partial_r\Phi_{w/z,\pm}(0,\theta),\phi)=0$, we are thus tasked with solving 
\[\frac{\partial^2 \Phi_{w,\pm}}{\partial r^2}+\omega \Phi_{w,\pm}=\frac{\partial}{\partial r}\left(\frac1r\frac{\partial v}{\partial \theta}\right)\pm\frac{\sqrt{\mu}}{r\sin\theta }\frac{\partial v}{\partial \phi}\] and 
\[\frac{\partial^2 \Phi_{z,\pm}}{\partial r^2}+\omega \Phi_{z,\pm}=\mp\frac{\sqrt{\mu}}r\frac{\partial v}{\partial \theta}+\frac1{\sin\theta }\frac{\partial}{\partial r}\left(\frac1{r}\frac{\partial v}{\partial \phi}\right).\]
However, using the solutions $(F,G)$ given in the statement of the theorem, we can immediately check that the expressions given are the correct ones; indeed, the uniqueness of the pair $(\u_{i,\pm})$ follows from the first point of Lemma \ref{Le:Saks_2013}.
 \end{proof}

An important corollary  of the explicit expression of the first eigenfuntions of the curl is the following:
\begin{corollary}\label{Co:Libre}
Let $(\alpha_{i,\pm})_{1=1,...,3}\in \R^6$ and consider the field
\[ \u=\sum_{i,\pm}\alpha_{i,\pm}\u_{i,\pm}\] where the $u_{i,\pm}$ are given by Lemma \ref{Le:Decomposition}. Then
\[\u=0\text{ on }\partial \B_3\] if, and only if,
\[ \forall i\in \{1,2,3\},\quad  \alpha_{i,+}=-\alpha_{i,-}.\]
\end{corollary}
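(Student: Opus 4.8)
The plan is to use the explicit formulas for $\u_{i,\pm}$ from Lemma~\ref{Le:Decomposition} and simply evaluate the linear combination at $r=1$. First I would note that, since $F(1)\neq 0$ and $G(1)=0$, the traces on $\partial\B_3$ of the six fields $\u_{i,\pm}$ are governed entirely by the terms involving $\psi(1)$, $F(1)$ and the sign in front of $G$ disappears. Concretely, for $i=3$,
\[
\u_{3,\pm}\big|_{r=1}=\cos\theta\,\psi(1)\er-F(1)\sin\theta\,\et,
\]
which does \emph{not} depend on the sign $\pm$; analogous expressions hold for $i=1,2$. Therefore $\u_{i,+}\big|_{\partial\B_3}=\u_{i,-}\big|_{\partial\B_3}$ for each $i$, and consequently
\[
\u\big|_{\partial\B_3}=\sum_{i=1}^3(\alpha_{i,+}+\alpha_{i,-})\,\u_{i,+}\big|_{\partial\B_3}.
\]

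Next I would show that the three restricted fields $\u_{1,+}\big|_{\partial\B_3}$, $\u_{2,+}\big|_{\partial\B_3}$, $\u_{3,+}\big|_{\partial\B_3}$ are linearly independent as elements of $L^2(\partial\B_3;\R^3)$. This is where one must be a little careful: the natural way is to relate these traces to the tangential fields $(\n\U_i)\nu$ on $\partial\B_3$, or directly to exhibit three linear functionals on such fields that form an invertible $3\times 3$ matrix. A clean route: the radial component of $\u_{i,+}$ on $\partial\B_3$ is $\psi(1)$ times the $i$-th spherical harmonic $v_i/r\big|_{r=1}$ (i.e. $\sin\theta\cos\phi$, $\sin\theta\sin\phi$, $\cos\theta$ respectively), and $\psi(1)=\sin\omega/\omega^2-\cos\omega/\omega\neq 0$ since $\tan\omega=\omega$ forces $\psi(1)=\cos\omega(\tan\omega/\omega^2-1/\omega)$... — one checks $\psi(1)=\cos\omega\,(1/\omega-1/\omega)$? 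Let me instead just invoke: since the $\{v_i\}$ span a $3$-dimensional eigenspace of the Laplacian and the map $\u_{i,+}\mapsto\langle x,\u_{i,+}\rangle=v_i$ is linear, any nontrivial relation $\sum c_i\u_{i,+}\big|_{\partial\B_3}=0$ would force (taking $\langle x,\cdot\rangle$, valid since this commutes with restriction) $\sum c_i v_i=0$ on $\partial\B_3$, hence all $c_i=0$ because the spherical harmonics $\sin\theta\cos\phi,\sin\theta\sin\phi,\cos\theta$ are linearly independent. This gives linear independence of the traces.

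Combining the two points: $\u\big|_{\partial\B_3}=0$ if and only if $\sum_{i=1}^3(\alpha_{i,+}+\alpha_{i,-})\u_{i,+}\big|_{\partial\B_3}=0$, which by linear independence holds if and only if $\alpha_{i,+}+\alpha_{i,-}=0$ for every $i\in\{1,2,3\}$, i.e. $\alpha_{i,+}=-\alpha_{i,-}$. The main obstacle, modest as it is, is establishing the linear independence of the boundary traces cleanly; using the operator $\u\mapsto\langle x,\u\rangle$ to push the problem down to linear independence of the three degree-one spherical harmonics is the way I would make that rigorous without additional computation. (One should also double-check the harmless point that $F(1)\neq0$ so that the $\et,\ep$ components are genuinely present, but this is already recorded in Lemma~\ref{Le:Decomposition} and is not needed for the argument above, which only uses the radial component.)
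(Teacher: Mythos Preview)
Your reduction is exactly right: since $G(1)=0$ the boundary traces satisfy $\u_{i,+}\big|_{\partial\B_3}=\u_{i,-}\big|_{\partial\B_3}$, so the problem becomes the linear independence of the three common traces. The paper does not write out a proof of this corollary (it is stated as an immediate consequence of the explicit formulas), and your overall scheme is the natural one.

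However, your argument for linear independence has a genuine gap. You attempt to use the radial component, i.e.\ the map $\u\mapsto\langle x,\u\rangle$, and claim that a relation $\sum c_i\u_{i,+}\big|_{\partial\B_3}=0$ forces $\sum c_i v_i=0$ on $\partial\B_3$ and hence $c_i=0$. But the $v_i$ are \emph{Dirichlet} eigenfunctions of the Laplacian, so $v_i\equiv 0$ on $\partial\B_3$; equivalently, your own computation was heading to the truth: with $\tan\omega=\omega$ one gets
\[
\psi(1)=\frac{\sin\omega}{\omega^2}-\frac{\cos\omega}{\omega}=\frac{\cos\omega}{\omega}-\frac{\cos\omega}{\omega}=0.
\]
This is nothing but the boundary condition $\langle\u_{i,\pm},\nu\rangle=0$ built into \eqref{Eq:EigenvalueCurl}. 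Hence the radial component of every $\u_{i,+}\big|_{\partial\B_3}$ vanishes identically and carries no information; your final parenthetical (``not needed for the argument above, which only uses the radial component'') is precisely backwards.

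The fix is to use the tangential components, which is where $F(1)\neq 0$ is essential. At $r=1$ the three traces are $F(1)$ times
\[
(\cos\theta\cos\phi)\,\et-\sin\phi\,\ep,\qquad (\cos\theta\sin\phi)\,\et+\cos\phi\,\ep,\qquad -\sin\theta\,\et,
\]
and a vanishing linear combination forces, from the $\ep$-component, $-c_1\sin\phi+c_2\cos\phi\equiv 0$ (so $c_1=c_2=0$), and then from the $\et$-component $-c_3\sin\theta\equiv 0$ (so $c_3=0$). With this correction your proof is complete.
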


\paragraph{Proof of Proposition~\ref{Pr:StokesBall}.}
We are now in a position to prove Proposition \ref{Pr:StokesBall}. Consider $\mathbf{U}$ a first eigenfunction of the Dirichlet-Stokes operator, associated with the eigenvalue $\lambda_1(\B_3)$. We already observed that $\lambda_1(\B_3)>\mu_1(\B_3)$. As $\U$ satisfies homogeneous Dirichlet boundary conditions, it follows in particular that 
\[ 
\langle \mathbf{U},\nu\rangle=0\quad \text{ on }\partial \B_3
\] 
and thus $\mathbf{U}\in X$, where $X$ is defined in Theorem \ref{Th:SpectralTheorem}. We can thus decompose $\mathbf{U}$ as 
\[ \mathbf{U}=\sum_{k\in \N} \alpha_{k,\pm}\u_{k,\pm}\] in $X$. Consequently, we deduce that 
\[ \lambda_1(\B_3)\in \{\xi_{k,\pm}^2\,, \xi_{k,\pm}\text{ eigenvalue of the $\curl$ operator}\}.\] In particular, from Lemma \ref{Le:Decomposition}, we have 
\[ \lambda_1(\B_3)\geq \omega^2.\]
However, observe that according to Lemma \ref{Le:Decomposition}, $\u_{1,+}-\u_{1,-}$ is an eigenfunction of the Stokes operator with eigenvalue $\omega^2$. Thus, we deduce that 
\[ \lambda_1(\B_3)=\omega^2\] and, furthermore, that for any associated eigenfunction $\mathbf{U}$, we can decompose $\mathbf{U}$ as 
\[\mathbf{U}=\sum_{i=1}^3\sum_\pm \alpha_{i,\pm} \u_{i,\pm}.\]
Since $\mathbf{U}$ satisfies homogeneous Dirichlet boundary conditions, Corollary \ref{Co:Libre} implies that 
$\mathbf{U}$ writes 
\[\mathbf{U}=\sum_{i=1}^3 \alpha_i\left(\u_{i,+}-\u_{i,-}\right).\] Thus, the eigenspace has dimension at most 3.

We now compute, for each $i\in \{1,2,3\}$, the function $\V_i:=\u_{i,+}-\u_{i,-}$. In spherical coordinates, we have 

\begin{equation}\begin{cases}\V_{1}=2 \frac{G}{r}\sin(\phi)\et+2 \frac{G}{r}\cos(\theta)\cos(\phi)\ep\,, 
\\ \V_{2}=-2 \frac{G}{r}\cos(\phi)\et+2 \frac{G}{r}\cos(\theta)\sin(\phi)\ep,
\\\V_{3}=-2 \frac{G}{r}\sin\theta \ep.
\end{cases}\end{equation}
Now, to get a nicer expression in cartesian coordinates $(\vec{e}_x,\vec{e}_y,\vec{e}_z)$, recall that 
\[\er=\frac1r\left(x\vec{e}_x+y\vec{e}_y+z\vec{e}_z\right),\quad \et=\frac{xz\vec{e}_x+yz\vec{e}_y-(x^2+y^2)\vec{e}_z}{r\sqrt{x^2+y^2}},\quad  \ep=\frac{-y \vec{e}_x+x\vec{e}_y}{\sqrt{x^2+y^2}}.
\]
 Furthermore, observe that
 \[ 
 \frac{G(r)}{r^2}=\psi(r).\] We thus obtain, in cartesian coordinates:
  
 \[\V_1=2\psi(r)\begin{pmatrix}0\\z\\-y\end{pmatrix}\,,\V_2=2\psi(r)\begin{pmatrix}-z\\0\\x\end{pmatrix}\,, \V_3=2\psi(r)\begin{pmatrix}-y\\x\\0\end{pmatrix}.
 \]
 In particular, defining $\U_i:=\frac{\V_i}2$, we observe that $\U_1\,, \U_2\,, \U_3$ are orthogonal, whence the conclusion: the eigenspace is three dimensional, and we have an orthogonal basis of it.

It remains to compute the normalization constant $A$, using a spherical change of coordinates and explicit computations. Let us provide some details hereafter. One has
\begin{eqnarray*} 
\Vert \mathbf{U}_i\Vert^2_{L^2(\B_3)} &=& \int_{\B_3}(x^2+y^2)\psi(r)^2\, dxdydz=\int_0^{2\pi}\int_0^\pi\int_0^1 r^4\sin^3\theta \psi(r)^2\, drd\theta d\varphi\\
&=& 2\pi \int_0^\pi \sin^3\theta\, d\theta\int_0^1 \left(\frac{\sin (\omega r)}{\omega^2r}-\frac{\cos (\omega r)}{\omega}\right)^2\, dr\\
&=& \frac{8\pi}{3}\left(\frac{2\cos^2\omega+\omega^2+\omega\cos\omega\sin\omega-2}{2\omega^4}\right).
\end{eqnarray*}
The desired expression follows easily by using that {$\omega \cos\omega=\sin \omega$}.

\section{Existence of an optimal shape under a box constraint}\label{Ap:DalMaso}

In this appendix, we prove the following result: 
\begin{proposition}\label{Pr:Box}
Let  $D\subset \R^d$ be a fixed compact domain of $\R^d$ and define the admissible class \begin{eqnarray*}
\widehat{\mathscr{O}}_{V_0}  = &\left\{\Omega \textrm{ quasi-open included in }D \,, |\Omega|\leq V_0 \right\}.
\end{eqnarray*}
The shape optimisation problem
\begin{equation}\inf_{\O\in \widehat{\mathscr{O}}_{V_0}}\lambda(\O)\end{equation} has a solution.\end{proposition}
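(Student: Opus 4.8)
The plan is to follow the Buttazzo--Dal Maso strategy \cite{Buttazzo_1993}, checking that its two structural hypotheses --- monotonicity of $\lambda_1$ for set inclusion and $\gamma$-lower semicontinuity --- survive the presence of the incompressibility constraint. First I would recall the $\gamma$-convergence apparatus: a sequence $\{\O_k\}$ of quasi-open subsets of $D$ $\gamma$-converges to a capacitary measure $\mu$ on $D$ if the torsion functions $w_{\O_k}$ (solutions of $-\Delta w_{\O_k}=1$ in $\O_k$, extended by $0$ on $D\setminus \O_k$) converge in $L^2(D)$ to $w_\mu$, the solution of $-\Delta w_\mu+\mu w_\mu=1$ in $D$; the class of capacitary measures on $D$ is $\gamma$-compact, and each $\mu$ comes with the quasi-open set $\O_\mu:=\{w_\mu>0\}$ and the space $H^1_\mu:=W^{1,2}_0(D)\cap L^2(D,\mu)$, which satisfies $H^1_\mu\subset W^{1,2}_0(\O_\mu)$. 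One extends $\lambda_1$ to capacitary measures by restricting the Rayleigh quotient to divergence-free fields in $H^1_\mu(\cdot\,;\R^d)$.

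Monotonicity is immediate, and I would dispatch it first: if $\O_1\subset \O_2$ then $W^{1,2}_0(\O_1;\R^d)\subset W^{1,2}_0(\O_2;\R^d)$ and this inclusion preserves the divergence-free subspaces, so $\lambda_1(\O_2)\le \lambda_1(\O_1)$. The heart of the argument is the $\gamma$-lower semicontinuity. Here I would fix $\O_k \xrightarrow{\gamma}\mu$ with $\liminf \lambda_1(\O_k)<\infty$ (otherwise there is nothing to prove), take first eigenfunctions $\u_k\in W^{1,2}_0(\O_k;\R^d)$ normalised by $\int_D\Vert \u_k\Vert^2=1$ (so that $\int_D\Vert \n\u_k\Vert^2=\lambda_1(\O_k)$ stays bounded along a subsequence), and use that, $D$ being bounded, $\{\u_k\}$ is bounded in $W^{1,2}_0(D;\R^d)$; after extraction, $\u_k\rightharpoonup \u$ weakly in $W^{1,2}$ and strongly in $L^2$ by Rellich. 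Then $\int_D\Vert \u\Vert^2=1$, so $\u\ne 0$; one has $\int_D\Vert\n\u\Vert^2\le \liminf\lambda_1(\O_k)$ by weak lower semicontinuity of the Dirichlet energy; $\n\cdot\u=0$ in $\R^d$ because the zero-extension of $\u_k$ has distributional divergence equal to the zero-extension of $\n\cdot\u_k=0$ and $\{\v:\n\cdot\v=0\}$ is weakly closed in $W^{1,2}$; and, applying componentwise the Mosco-type property of $\gamma$-convergence (a weak $W^{1,2}$-limit of functions in $W^{1,2}_0(\O_k)$ lies in $H^1_\mu$), $\u\in H^1_\mu(\cdot\,;\R^d)$. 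Hence $\u$ is an admissible competitor for $\lambda_1(\mu)$ and $\lambda_1(\mu)\le \liminf\lambda_1(\O_k)$.

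With these two properties in hand, I would finish along the usual lines: a minimising sequence $\{\O_k\}$ admits, up to a subsequence, a $\gamma$-limit $\mu$; the $\gamma$-lsc gives $\lambda_1(\mu)\le \inf_{\widehat{\mathscr{O}}_{V_0}}\lambda_1$, the inclusion $H^1_\mu\subset W^{1,2}_0(\O_\mu)$ (which again respects incompressibility) gives $\lambda_1(\O_\mu)\le \lambda_1(\mu)$, and the volume constraint passes to the limit because, along a subsequence, $w_{\O_k}\to w_\mu$ a.e. on $D$ with $w_{\O_k}>0$ quasi-everywhere on $\O_k$ and $\equiv 0$ outside, so $\mathds 1_{\O_\mu}\le \liminf \mathds 1_{\O_k}$ a.e. and $|\O_\mu|\le \liminf|\O_k|\le V_0$ by Fatou. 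Thus $\O_\mu\in\widehat{\mathscr{O}}_{V_0}$ realises the infimum.

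The only point that is genuinely new with respect to the scalar Faber--Krahn argument, and the one I would be most careful about, is the stability of the constraint $\n\cdot\u=0$ under the relevant weak convergences and under the inclusions $H^1_\mu\subset W^{1,2}_0(\O_\mu)$; this turns out to be elementary, the constraint being linear, weakly closed, and only shrinking when the domain shrinks, so I do not expect a serious obstacle beyond this bookkeeping. The one structural subtlety worth flagging is that a $\gamma$-limit of quasi-open sets need not itself be quasi-open, which is exactly why one routes the argument through capacitary measures and the torsion-function reduction $\mu\mapsto \O_\mu$ rather than working with quasi-open sets throughout.
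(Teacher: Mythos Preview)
Your proof is correct, but it takes a heavier route than the paper. The paper bypasses $\gamma$-convergence and capacitary measures entirely: it simply reformulates the problem as a minimisation over divergence-free $\u\in W^{1,2}_0(D;\R^d)$ subject to $|\{\u\neq 0\}|\le V_0$ and $\int_D\Vert\u\Vert^2=1$, takes a minimising sequence $\{\u_k\}$, extracts a weak $W^{1,2}_0$/strong $L^2$ limit $\u^\star$, and sets $\O^\star:=\{\u^\star\neq 0\}$. The volume bound, normalisation, and incompressibility pass to the limit exactly as in your middle paragraph, and weak lower semicontinuity of the Dirichlet energy closes the argument. In effect, the paper carries out only the ``$\gamma$-lsc'' computation you describe, but applied directly to a minimising sequence rather than to an arbitrary $\gamma$-convergent one, and takes the quasi-open set $\{\u^\star\neq 0\}$ in place of your $\O_\mu$ --- so the detour through the relaxed measure $\mu$ and the torsion-function apparatus is avoided.

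What your approach buys is a cleaner conceptual separation (monotonicity + $\gamma$-lsc as reusable structural properties), and the explicit acknowledgement that $\gamma$-limits of quasi-open sets need not be quasi-open. What the paper's approach buys is brevity: since one only needs \emph{some} admissible set beating the infimum, the support of the limit eigenfunction does the job without invoking Mosco convergence, $H^1_\mu$, or the reduction $\mu\mapsto\O_\mu$. Both arguments hinge on the same elementary observation you correctly isolate --- that the divergence-free constraint is linear and weakly closed --- so there is no genuine new difficulty in either version.
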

\begin{proof}[Proof of Proposition \ref{Pr:Box}]
We introduce, for $\Omega\in \widehat{\mathcal{O}}_{V_0}$, the quantity $\widehat{\lambda_1}(V_0\color{black})$ given by
\begin{equation}\label{hatLamb1V}
\widehat{\lambda_1}(V_0)=\inf_{\substack{\mathbf{u}\in W^{1,2}_0(D;\R^3)\\ \int_D \mathbf{u}\cdot \nabla v=0\ \forall v\in H^1(D)\\ |\Omega_{\mathbf{u}}|\leq V_0\\\int_{\Omega_{\mathbf{u}}} \Vert \u\Vert^2=1}}{\int_D \Vert \nabla \mathbf{u}\Vert^2}\end{equation}
where $\Omega_{\mathbf{u}}$ denotes the quasi open set $\{\mathbf{u}\neq 0\}$. 

Let $\{\u_k\}_{k\in\mathbb{N}}$ be minimising for Problem \eqref{hatLamb1V}. For any $k\in \N$, let $\Omega_k:=\{\u_k\neq 0\}$. In view of the Poincar\'e inequality \cite[Lemma~4.5.3]{zbMATH06838450},  $\{\u_k\}_{k\in\N}$ is bounded in $W^{1,2}_0(D;\R^3)$. It follows that there exists $\mathbf{u}^\star\in W^{1,2}_0(D;\R^3)$ such that $\{\u_k\}_{k\in\N}$ converges to $\mathbf{u}^\star$ up to a (not relabelled) subsequence, weakly in $W^{1,2}_0$ and strongly in $L^2$. Thus, $\{u_k\}_{k\in \N}$ converges almost everywhere to $\mathbf{u}^\star$, and therefore
\[|\Omega_{\mathbf{u^\star}}|\leq \liminf_{k\to +\infty}|\Omega_k|\leq V_0
\]
where $\Omega_{\mathbf{u^\star}}$ denotes the quasi-open set $\{\mathbf{u^\star}\neq 0\}$. We infer that $\Omega_{\mathbf{u^\star}}$ belongs to $\widehat{\mathscr{O}}_{V_0}$. Furthermore, since $\mathbf{u^\star}=0$ quasi-everywhere on $D\backslash \Omega_{\mathbf{u}^\star}$, by weak $H^1$-convergence, one has
$$
\int_{\Omega_{\mathbf{u}^\star}}\Vert \nabla \mathbf{u}^\star\Vert ^2 \leq \liminf_{k\to +\infty}\int_{\Omega_k}\Vert \nabla \u_k\Vert^2 = \widehat{\lambda_1}(V_0) 
\leq \int_{\Omega_{\mathbf{u}^\star}} \Vert \nabla \mathbf{u}^\star\Vert ^2
$$
by minimality, whence the equality of these quantities. By strong convergence in $L^2(D;\R^3)$ and weak convergence in $W^{1,2}_0(D;\R^3)$, there holds 
\[\int_D \Vert\mathbf{u}^*\Vert^2=1\] and, for any $v\in W^{1,2}(D)$, 
\[
0=\lim_{k\to +\infty}\int_D\langle \u_k, \nabla v\rangle=\int_D\langle \mathbf{u}^\star, \nabla v\rangle.\] Thus, $\O_{\u^*}$ is a solution of \eqref{hatLamb1V}. The proof is now concluded.
\end{proof}
\bibliographystyle{abbrv}

\bibliography{BibHmp}

\end{document}